\newtheorem{theorem}{Theorem}[section]
\newtheorem{lemma}[theorem]{Lemma}
\newtheorem{corollary}[theorem]{Corollary}
\newtheorem{proposition}[theorem]{Proposition}
\theoremstyle{remark}
\theoremstyle{definition}
\numberwithin{equation}{section} \makeatother
\newcommand{\norm}[1]{\left\|#1\right\|}
\newcommand{\Ker}[1]{{\rm Ker}(#1)}
\DeclareMathOperator{\spn}{Span} 
\DeclareMathOperator{\Kdb}{{\mathbb K}}
\DeclareMathOperator{\Cdb}{{\mathbb C}}
\DeclareMathOperator{\Rdb}{{\mathbb R}}
\DeclareMathOperator{\Ddb}{{\mathbb D}}
\DeclareMathOperator{\Ndb}{{\mathbb N}}
\begin{document}

\title[Ideals and HSA's in operator algebras]{Ideals and hereditary subalgebras  in operator algebras}
\author{Melahat Almus}
\author{David P. Blecher}
\author{Charles John Read}

\address{Department of Mathematics, University of Houston, Houston, TX
77204-3008}
 \email[Melahat Almus]{almus@math.uh.edu}
\address{Department of Mathematics, University of Houston, Houston, TX
77204-3008}
 \email[David P.
Blecher]{dblecher@math.uh.edu}
\address{Department of Pure Mathematics,
University of Leeds,
Leeds LS2 9JT,
England}
 \email[Charles John Read]{read@maths.leeds.ac.uk}
\thanks{*Blecher was partially supported by a grant  from
the National Science Foundation.}
\keywords{Operator algebras, one-sided ideals,
hereditary subalgebra, approximate identity,
semisimple algebra, semiprime algebra}
\date{Revision of June 13, 2012}
\begin{abstract}
This paper may be viewed as having two aims.
First, we continue our study of algebras of operators on a Hilbert space
which have a contractive approximate identity, this
time from a more Banach algebraic point of view.  Namely, we
mainly  investigate topics concerned with the ideal structure, and hereditary subalgebras (HSA's), which are
in some sense  
generalization of ideals.   Second, we study properties of operator algebras which are hereditary subalgebras in their bidual, or equivalently which are `weakly compact'.  We also give several examples answering natural questions that 
arise in such an investigation.
\end{abstract}

\maketitle

\section{Introduction}  For us, an operator algebra is a norm closed algebra of operators on a Hilbert space.
This paper may be viewed as having two aims.
First, we continue our study of the structure of operator  algebras 
which have a contractive approximate identity (cai), this time from a slightly more  Banach algebraic point of view.  Indeed 
this paper may be seen as a collection of general results growing out of topics raised in \cite{ABS}
 concerning ideals  and  {\em hereditary subalgebras} (HSA's) of operator algebras.
We recall that HSA's are in some sense  
a generalization of ideals (a HSA $D$ in $A$ must satisfy $DAD \subset D$) .   Some of these general results are
of a technical nature, and so this paper should serve in part as a repository that will be  useful in later development of  the
themes of interest here.  
 Second, we study properties of operator algebras which are hereditary subalgebras in their bidual (this is equivalent to $A$ being `weakly compact',
see e.g.\  Lemma \ref{cohwc} below).  We give several examples answering natural questions that 
arise in such an investigation.  For example, questions involving semisimplicity or semiprimeness (we recall
that an algebra $A$  is semisimple if its Jacobson radical $J(A) = (0)$, and is semiprime if $(0)$ is the only 
(closed) ideal with square $(0)$).

In Section 2 (resp.\  Section 4) of the paper we present some general results about ideals (resp.\ HSA's)
in 
operator algebras.   In Section 3 we discuss adjoining a square root to an operator algebra, and use this to 
answer some natural questions.
We also  discuss in Section 3 whether for an approximately unital operator algebra being   semisimple (resp.\ semiprime, radical)
implies or is implied by $A^{**}$  being   semisimple (or semiprime, radical).  (We do 
not think we know yet  if $A^{**}$ semisimple implies $A$ semisimple  if $A$ is noncommutative.)  
In Section 5 we study operator algebras $A$ which are hereditary subalgebras of their bidual, which
as we said above is equivalent to the multiplication $x \mapsto axb$ being weakly compact on $A$
for all $a, b \in A$.  Some of the properties of such algebras are similar to operator algebras which are one-sided ideals in their bidual, which were studied in \cite{Sharma,ABS}.  We also study
the more 
general class of algebras that we call {\em nc-discrete}, which means that all the open projections are 
also closed (or equivalently lie in the multiplier algebra $M(A)$).    Any compact operator algebra is
a HSA in its bidual; and we show that any operator algebra which  is
a HSA in its bidual is nc-discrete. Neither of these two implications are reversible though, as
may be seen in  Examples \ref{ell2}, 
\ref{ell1}, and Theorem \ref{wcnc}.   Indeed in  Section 6 we 
present examples of operator algebras exhibiting  various properties illustrating  the topics
 of interest in this paper.   In particular we give what is as far as we can see  the 
first explicit example in the literature of an interesting (i.e.\  not reflexive in the Banach space sense) 
commutative algebra 
whose multiplication is weakly compact but not compact.   In Section 7 we discuss the diagonal of a quotient
algebra.

Some of the topics in the earlier parts of this paper and in Section 7 were investigated in the first author's Ph.D.\ thesis \cite{AlTh}.
Some related results and topics may be found there too.   Others of our results related to HSA's have been presented at various venues in 2010.

We now turn to notation and more precise definitions.
The reader is referred for example to \cite{BLM,BHN,BRead} for more details on
some of the topics below if needed.   By an {\em ideal} of an operator algebra $A$ we shall
always mean a closed two-sided ideal in $A$.   For us a {\em projection}
is always an orthogonal projection, and an {\em idempotent} merely
satisfies $x^2 = x$. If $X, Y$ are sets, then $XY$ denotes the
closure of the span of products of the form $xy$ for $x \in X, y \in
Y$.     We recall that by a theorem due to  Ralf Meyer,
every operator algebra $A$ has a unique unitization $A^1$ (see
 \cite[Section 2.1]{BLM}). Below $1$ always refers to
the identity of $A^1$ if $A$ has no identity.  If $A$ is a nonunital
operator algebra represented (completely) isometrically on a Hilbert
space $H$ then one may identify $A^1$ with $A + \Cdb I_H$.   The
second dual $A^{**}$ is also an operator algebra with its (unique)
Arens product, this is also the product inherited from the von Neumann
algebra $B^{**}$ if
$A$ is a subalgebra of a $C^*$-algebra $B$.  Meets and joins in
$B^{**}$ of projections in $A^{**}$ remain in $A^{**}$, as can be
readily seen for example by inspecting some of the classical
formulae for meets and joins of Hilbert space projections,
or by noting that these meets and joins may be computed in the biggest
von Neumann algebra contained inside $A^{**}$. Note that
$A$ has a cai iff $A^{**}$ has an identity $1_{A^{**}}$ of norm $1$,
and then $A^1$ is sometimes identified with $A + \Cdb 1_{A^{**}}$.
In this case the multiplier algebra $M(A)$ is identified with the
idealizer of $A$ in $A^{**}$ (that is, the set of elements
$\alpha\in A^{**}$ such that $\alpha A\subset A$ and $A
\alpha\subset A$).

The set
 of compact operators on a Hilbert space is often called an
{\em elementary $C^*$-algebra}.  We call a $c_0$-direct sum of elementary $C^*$-algebras
 an {\em annihilator $C^*$-algebra.}

 The {\em diagonal} $\Delta(A)$ is defined to be $A \cap A^*$,
it is a $C^*$-algebra which is well defined independently of  the particular (completely isometric) representation of $A$. 
Most of our algebras 
and ideals  are {\em approximately unital}, i.e.\ have a contractive approximate identity (cai), although
for some results this is probably not necessary.
We recall that an {\em r-ideal} is a right ideal with a left cai, and an {\em $\ell$-ideal} is a left ideal with a right cai.
We say that an operator algebra $D$ with cai, which is a subalgebra of
another operator algebra $A$, is a HSA (hereditary subalgebra)
in $A$, if $DAD \subset D$.    See
\cite{BHN} for the theory of HSA's (a few more results may be found in \cite{ABS,BRead}).
HSA's in $A$ are in an order preserving,
bijective correspondence with the r-ideals in $A$, and also with the
{\em open projections} $p \in A^{**}$, by which we mean that there
is a net $x_t \in A$ with $x_t = p x_t p \to p$ weak*.  These are
also the open projections $p$ in the sense of Akemann \cite{Ake2} in $B^{**}$, where $B$ is a $C^*$-algebra containing $A$, such that
$p \in A^{\perp \perp}$.   The complement (`perp') of an open projection is called a {\em closed projection}.  We spell out some of the correspondences above:
if $D$ is a HSA in $A$, then $DA$ (resp.\ $AD$) is the matching  r-ideal (resp.\ $\ell$-ideal),
and $D = (DA)(AD) = (DA) \cap (AD)$.
The weak* limit of a cai for $D$, or of a left cai for an r-ideal, is
an open projection, and is called the {\em support projection}.  
Conversely, if $p$ is an open projection in $A^{**}$, then
$pA^{**} \cap A$ and $pA^{**}p \cap A$ is the matching r-ideal and HSA pair
in $A$.  

  It is a well-known fact that if $J$ is an ideal of an operator algebra $A$, then the quotient algebra $A/J$ is isometrically isomorphic to an operator algebra \cite[Proposition 2.3.4]{BLM}. 
  Of course there is a `factor theorem':
 if $u: A \to B$ is a completely bounded homomorphism between 
operator algebras, and if $J$ is an ideal in $A$ contained in Ker$(u)$, then the 
canonical map $\tilde{u}: A/J \to B$ is also completely bounded with completely bounded norm 
$\norm{\tilde{u}}_{cb}=\norm{u}_{cb}$. If $J = \Ker{u}$, then $u$ is a complete quotient map if and only if $\tilde{u}$ is a completely isometric isomorphism.

Let $A$ be an operator algebra.  The set
${\mathfrak F}_A = \{ x \in A : \Vert 1 - x \Vert \leq 1 \}$ equals
$\{ x \in A : \Vert 1 - x \Vert = 1 \}$ if $A$ is nonunital, whereas
if $A$ is unital then ${\mathfrak F}_A = 1 + {\rm Ball}(A)$.  Many properties of ${\mathfrak F}_A$ are 
developed in \cite{BRead,BReadII}.  If $A$ is a closed subalgebra of an operator algebra $B$
then it is easy to see, using the
uniqueness of the unitization, that ${\mathfrak F}_A = A \cap {\mathfrak F}_B$.

We write $J(A)$ for the Jacobson radical (see e.g.\ \cite{Pal}).  It is a fact in 
pure algebra that an algebra is semiprime (resp.\ semisimple) iff
its unitization is semiprime (resp.\ semisimple).  Indeed
$J(A) = J(A^1)$ (see \cite[4.3.3]{Pal}).    
  One trap to beware of is that
 the $C^*$-algebra generated by a HSA $D$ in an operator algebra
$A$ need not be a
HSA in a $C^*$-algebra generated by $A$.  In particular $C^*(D)$ need not be an
HSA in $C^*_{\rm max}(A)$ or in $C^*_{\rm e}(A)$.
An example is ${\mathcal U}(M_2)$, the subalgebra of $M_2(A)$ with $0$ in the 
$2$-$1$-entry, and scalar multiples of the identity on the main diagonal, 
in the case $A = M_2$.

\section{General results on ideals in operator algebras}

The first two results that follow are obvious, and follow from the analogous 
results for general operator spaces.

\begin{theorem}[First Isomorphism Theorem]\label{FIT}
Let $u : A \to B$ be a complete quotient map which is a homomorphism between operator algebras. Then, $\Ker{u}$ is an ideal in $A$ and $A / \Ker{u} \cong 
B$ completely isometrically isomorphically. Conversely, every ideal of $A$ is of the form $\Ker{u}$ for a complete quotient map $u:A \to B$, where $A$ and $B$ are operator algebras.
\end{theorem}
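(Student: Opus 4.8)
The plan is to deduce this from standard facts cited in the excerpt. First I would check the forward direction: if $u : A \to B$ is a complete quotient map and a homomorphism, then $\Ker{u}$ is obviously a norm closed two-sided ideal in $A$ (it is closed since $u$ is bounded, and two-sided since $u$ is multiplicative). The quoted `factor theorem' from the excerpt tells us that the induced map $\tilde u : A/\Ker{u} \to B$ is completely bounded, and precisely because $J = \Ker{u}$ and $u$ is a complete quotient map, the same factor theorem asserts that $\tilde u$ is a completely isometric isomorphism. The only small gap to fill is surjectivity of $\tilde u$, which is immediate since $u$ is surjective (a complete quotient map is in particular surjective onto $B$, at least onto a dense subspace, and by the open mapping theorem onto $B$ itself). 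This gives $A/\Ker{u} \cong B$ completely isometrically isomorphically.

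For the converse, let $J$ be an arbitrary (closed, two-sided) ideal of $A$. The plan is to take $B := A/J$ and $u : A \to A/J$ the canonical quotient map. By \cite[Proposition 2.3.4]{BLM}, quoted in the excerpt, $A/J$ is (completely isometrically isomorphic to) an operator algebra, and $u$ is a homomorphism. The quotient map $A \to A/J$ is always a complete quotient map of operator spaces, so $u$ is a complete quotient homomorphism between operator algebras with $\Ker{u} = J$. Hence every ideal arises this way.

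The main point to be careful about — really the only nontrivial ingredient — is that the quotient $A/J$ of an operator algebra by a closed ideal is again completely isometrically an operator algebra with the quotient operator space structure; but this is exactly the cited \cite[Proposition 2.3.4]{BLM}, and the matching behaviour of the homomorphism and its completely bounded norm under passing to the quotient is the cited factor theorem. So the proof is essentially an assembly of these two quoted results together with the elementary observations that $\Ker{u}$ is a closed two-sided ideal and that $\tilde u$ inherits surjectivity from $u$. There is no real obstacle; the work has all been done in the results we are permitted to assume, and the theorem is, as the authors say, ``obvious'' and follows from the analogous statement for operator spaces.
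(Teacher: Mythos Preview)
Your proposal is correct and matches the paper's approach exactly: the authors give no explicit proof, stating only that the result is ``obvious, and follow[s] from the analogous results for general operator spaces,'' which is precisely the assembly of the cited factor theorem and \cite[Proposition 2.3.4]{BLM} that you carried out.
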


\begin{theorem} [Second Isomorphism Theorem] \label{SIT}
Let $A$ be an approximately unital operator algebra, let
$J$ be an ideal in $A$, and suppose that $I$ is an ideal in $J$.
  Then, $(A/I) / (J/I) \cong  A/J$ completely isometrically isomorphically
(as operator algebras).
\end{theorem}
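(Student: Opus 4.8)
Here is my plan.

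The strategy is to reduce the Second Isomorphism Theorem to the First Isomorphism Theorem (Theorem \ref{FIT}) together with the factor theorem quoted in the introduction. First I would note that since $A$ is approximately unital, the quotient $A/J$ is again an operator algebra (by \cite[Proposition 2.3.4]{BLM}), and similarly $A/I$ is an operator algebra and $J/I$ is an ideal in it; indeed the image of $J$ under the quotient map $q_I : A \to A/I$ is a closed ideal since $q_I$ is a complete quotient map and $J$ is closed, and one checks $q_I(J) = J/I$ in the usual algebraic sense. So the statement at least makes sense.

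Next I would work with the composition $q_J : A \to A/J$, the canonical complete quotient homomorphism with $\Ker{q_J} = J$. Since $I \subset J = \Ker{q_J}$, the factor theorem stated in the introduction gives a completely bounded homomorphism $\tilde{q}_J : A/I \to A/J$ with $\tilde{q}_J \circ q_I = q_J$, and moreover $\|\tilde{q}_J\|_{cb} = \|q_J\|_{cb} = 1$ so it is completely contractive. The key point is then that $\tilde{q}_J$ is itself a complete quotient map: this follows because $q_J = \tilde{q}_J \circ q_I$ is a complete quotient map and $q_I$ is a complete quotient map (a standard fact about operator spaces: if a composition of complete contractions is a complete quotient map, the last map is). Finally, $\Ker{\tilde{q}_J}$ consists of the cosets $a + I$ with $a \in J$, which is exactly $J/I$. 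Applying Theorem \ref{FIT} to the complete quotient homomorphism $\tilde{q}_J : A/I \to A/J$ yields $(A/I)/(J/I) \cong A/J$ completely isometrically isomorphically.

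I would expect the main obstacle — or rather the only point needing genuine care — to be the verification that $\tilde q_J$ is a complete quotient map and the bookkeeping identifying $\Ker{\tilde q_J}$ with $J/I$; both are essentially formal once one is careful that all the maps in sight are complete quotient maps and that one is entitled to apply Theorem \ref{FIT}, which in turn requires knowing $A/I$ and $A/J$ are operator algebras (hence the approximate-unitality hypothesis, invoked via \cite[Proposition 2.3.4]{BLM}, or rather just the fact about quotients of operator algebras by ideals). The authors' own remark that ``the first two results that follow are obvious, and follow from the analogous results for general operator spaces'' suggests this is the intended level of detail: the whole content is that quotients and the isomorphism theorems for operator spaces are compatible with the multiplicative structure, which costs nothing beyond checking that images and kernels of homomorphisms are subalgebras and ideals.
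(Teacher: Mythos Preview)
Your argument is correct and is exactly the kind of reasoning the paper has in mind: the authors give no proof beyond the remark that the result ``is obvious, and follows from the analogous results for general operator spaces,'' and your reduction to Theorem~\ref{FIT} via the factor theorem is precisely that analogous operator-space argument with the multiplicative bookkeeping made explicit. The only small caveat is that for $A/I$ to be an algebra one needs $I$ to be a (closed two-sided) ideal in $A$, not merely in $J$; this is tacit in the statement and in your proof, and is presumably what the authors intend.
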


\begin{theorem} [Third Isomorphism Theorem] \label{TIT}
Let $A$ be an approximately unital operator algebra,
 and suppose that $J$ and $K$ are ideals in $A$, where $J$ has a cai. Then, $J / (J \cap K) \cong (J+K) / K$ completely isometrically isomorphically. In particular, $(J+K)/K$ is closed.
\end{theorem}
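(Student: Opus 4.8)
The plan is to realize the asserted isomorphism concretely. Let $q : A \to A/K$ be the canonical (completely contractive, indeed complete quotient) homomorphism with $\Ker{q} = K$, and restrict it to $J$. Then $q|_J : J \to A/K$ is a completely contractive homomorphism whose kernel is exactly the closed two-sided ideal $J \cap K$ of $J$, and whose range, viewed as a subspace of $A/K$, is precisely $(J+K)/K$. By the factor theorem recalled in the Introduction, $q|_J$ induces a completely contractive injective homomorphism $\tilde q : J/(J \cap K) \to A/K$ whose range is $(J+K)/K$. Everything in the statement then follows once we verify that $\tilde q$ is a complete isometry: for then its range is complete, hence norm closed in $A/K$, so $(J+K)/K$ is closed, and $\tilde q$ is a completely isometric algebra isomorphism of $J/(J\cap K)$ onto $(J+K)/K$.

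So the one substantive point is the estimate $\norm{\xi + M_n(J \cap K)} \leq \norm{\xi + M_n(K)}$ for every $n$ and every $\xi \in M_n(J)$, the reverse inequality being trivial since $M_n(J\cap K) \subseteq M_n(K)$; here the first norm is the quotient norm in $M_n(J)/M_n(J\cap K) = M_n(J/(J\cap K))$ and the second is the quotient norm in $M_n(A)/M_n(K) = M_n(A/K)$. This is where the cai $(e_t)$ of $J$ enters. Let $e_t^{(n)} \in M_n(J)$ be the diagonal matrix with $e_t$ in each diagonal entry; it is a cai for $M_n(J)$ and $\norm{e_t^{(n)}} \leq 1$. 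Since $M_n(J)$ and $M_n(K)$ are two-sided ideals of $M_n(A)$, for any $y \in M_n(K)$ we have $e_t^{(n)} y \in M_n(J) \cap M_n(K) = M_n(J \cap K)$. Because $e_t^{(n)} \xi \to \xi$, we get $\norm{\xi - e_t^{(n)} y} \leq \norm{\xi - e_t^{(n)} \xi} + \norm{e_t^{(n)}}\, \norm{\xi - y}$, whence $\limsup_t \norm{\xi - e_t^{(n)} y} \leq \norm{\xi - y}$; taking the infimum over $y \in M_n(K)$ gives the desired inequality. Thus $\tilde q$ is a complete isometry, and the proof is complete.

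Since none of the individual steps is hard, I expect the \emph{main obstacle} to be simply spotting the mechanism: the cai of $J$ allows one to move a near-optimal representative $y \in K$ of a coset $j + K$ into the smaller ideal $J \cap K$ at no asymptotic cost, by replacing $y$ with $e_t y$. Without an approximate identity on $J$ this step fails, and indeed $(J+K)/K$ need not be closed. I would also remark that the hypothesis that $A$ itself be approximately unital is not actually used here, apart from ensuring via \cite[Proposition 2.3.4]{BLM} that the quotients involved are again operator algebras --- which holds for any operator algebra --- and is presumably stated for consistency with Theorem \ref{SIT}.
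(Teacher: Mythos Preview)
Your proof is correct and follows essentially the same approach as the paper's: both arguments hinge on the observation that multiplying an element of $K$ by the cai $(e_t)$ of $J$ lands in $J\cap K$, which yields the nontrivial inequality $\norm{j + J\cap K} \leq \norm{j + K}$. The one cosmetic difference is that the paper first invokes \cite[Proposition~2.4]{Dixon} to know $J+K$ is closed and then checks the map is a complete isometry, whereas you deduce closedness of $(J+K)/K$ as a byproduct of the complete isometry --- a slightly more self-contained presentation, but not a different idea.
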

\begin{proof} Note that by \cite[Proposition 2.4]{Dixon}, $J+K$ is closed.
Define a map $u : J/(J \cap K) \rightarrow (J+K)/K$ by $u (j+ J \cap K) = j + K$. This is a well-defined map and $u$ is one-to-one since $\Ker{u} = (0_{J/(J \cap K)})$. Moreover, $u$ is onto since $x +K \in (J+K)/K$ implies that $x=j+k$, where $j \in J, k \in K$ and $x+ K= j+K = u(j+ J \cap K)$.

Since $\inf{ \{ \norm{j+k}: k \in K \}} \leq \inf{\{ \norm{j + k}: k \in J \cap K\} }$, $u$ is a contraction. Let $(e_t)$ be the cai for $J$ and let $k \in K$. Then,
\[\norm{j+k} \geq \norm{e_t j + e_t k} \geq \norm{e_t j + J \cap K} . \] 
After taking the limit, we get $\norm{j+ J \cap K} \leq \norm{j+k}$, and 
so $\norm{j + J \cap K} \leq \norm{j+K}$. Hence, $u$ is an isometry. Similarly, $u$ is a complete isometry.
\end{proof}

For Banach algebras the `Correspondence Theorem' states that for
a Banach algebra $A$ and a closed ideal $J$ in $A$, every closed subalgebra 
$K$ of $A/J$ is of the form $I/J$, where $I$ is a closed subalgebra of $A$ with $J \subset I \subset A$.
Also, every ideal $K$ of $A/J$ is of the form $I/J$, where $I$ is an ideal of $A$ with $J \subset I \subset A$.  Indeed $I = q^{-1}(K)$ where $q : A \to  A/J$ is
the canonical map.

\begin{theorem} \label{COTau}
Let $A$ be an approximately unital operator algebra, let $I$ be an approximately 
unital ideal in $A$ and let $J$ be an approximately unital ideal in $I$. Then, 
$I/J$ is an approximately unital ideal in $A/J$.
Conversely, every approximately unital ideal of $A/J$ is of the form $I/J$, where $I$ is an approximately unital ideal in $A$ with $J \subset I \subset A$.
\end{theorem}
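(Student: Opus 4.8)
The plan is to handle the two directions separately; the forward implication is essentially routine, and the real content lies in the converse. \emph{Forward direction.} First one must check that $J$ is actually a two-sided ideal in $A$, so that $A/J$ and its subspace $I/J$ make sense. Let $(e_t)$ be a cai for $J$. For $a \in A$ and $j \in J$ one has $aj = \lim_t a(e_t j)$, and $a e_t \in I$ since $I$ is an ideal in $A$ and $e_t \in J \subseteq I$; hence $(a e_t) j \in J$ because $J$ is an ideal in $I$, and so $aj \in J$. Symmetrically $ja \in J$, so $J$ is an ideal in $A$, and by \cite[Proposition 2.3.4]{BLM} the quotient $A/J$ is (completely isometrically) an operator algebra. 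Writing $q : A \to A/J$ for the quotient map, $I/J = q(I)$ is a closed subalgebra of $A/J$, and it is a two-sided ideal there since $q(a)q(x) = q(ax) \in q(I)$ and $q(x)q(a) = q(xa) \in q(I)$ whenever $a \in A$, $x \in I$. Finally $I/J$ is approximately unital: if $(f_s)$ is a cai for $I$ then $(q(f_s))$ is contractive and $q(f_s)q(x) = q(f_s x) \to q(x)$, and likewise on the other side, for every $x \in I$.

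\emph{Converse direction.} Let $\mathcal{I}$ be an approximately unital (hence closed, two-sided) ideal in $A/J$. By the Correspondence Theorem recalled just before the statement, $\mathcal{I} = I/J$ where $I = q^{-1}(\mathcal{I})$ is a closed ideal of $A$ with $J \subseteq I \subseteq A$, so all that remains is to show this $I$ is approximately unital. I would do this by passing to biduals. Since $J$ is an approximately unital ideal in $A$, the bidual $J^{**} = J^{\perp\perp}$ is a weak* closed two-sided ideal in $A^{**}$ with an identity $e$, and $e$ is a projection satisfying $J^{**} = e A^{**}$; from $ea = e(ea) = (ea)e = eae$ and $ae = (ae)e = e(ae) = eae$ for $a \in A^{**}$ we see that $e$ is central in $A^{**}$. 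Hence $(A/J)^{**} \cong A^{**}/J^{\perp\perp}$ may be identified, as an operator algebra, with $(1-e)A^{**}$, in such a way that $q^{**}$ becomes $a \mapsto (1-e)a$. The ideal $I^{**} = I^{\perp\perp}$ contains $J^{\perp\perp} = eA^{**}$, so by centrality of $e$ it splits as $I^{**} = e A^{**} \oplus (1-e) I^{**}$, where $(1-e) I^{**} = I^{**} \cap (1-e) A^{**}$ is weak* closed and coincides with the image $q^{**}(I^{**}) = (I/J)^{\perp\perp} = (I/J)^{**}$.

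Now $(I/J)^{**}$ has an identity $f$ of norm one, which is a projection, and under the identification above $f$ is a projection in $(1-e)A^{**} \subseteq A^{**}$ orthogonal to $e$. Then $e + f$ is a projection serving as an identity for $I^{**} = e A^{**} \oplus (1-e) I^{**}$ (acting as $e$ on the first summand and as $f$ on the second), and $\norm{e+f} = 1$ because the central projection $e$ splits $A^{**}$ isometrically as the $\ell^\infty$-direct sum $e A^{**} \oplus (1-e) A^{**}$: realizing $A^{**}$ concretely on a Hilbert space, $e$ is an orthogonal projection commuting with $A^{**}$, so its range and kernel reduce $A^{**}$. Thus $I^{**}$ is unital with identity of norm $1$, which is equivalent to $I$ having a cai. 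This completes the converse.

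The forward direction is routine. In the converse, the one genuinely structural fact used is that the support projection in $A^{**}$ of an approximately unital \emph{ideal} is central; granting that, the remaining work is bookkeeping for the identifications $(A/J)^{**} \cong (1-e) A^{**}$ and $(1-e) I^{**} \cong (I/J)^{**}$, and the verification that $e+f$ has norm exactly one, which I expect to be the most delicate point to write cleanly. Alternatively, the fact that an extension of an approximately unital operator algebra by an approximately unital ideal is again approximately unital may be available as a cited result, which would shorten the converse considerably.
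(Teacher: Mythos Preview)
Your proof is correct. The forward direction matches the paper's dismissal of it as ``easy'' (though you helpfully spell out why $J$ is in fact a two-sided ideal in $A$, which is needed for $A/J$ to make sense). For the converse, the paper does not argue directly but simply invokes \cite[Proposition~3.1]{BR}, which is precisely the extension result you anticipate in your closing sentence: if $J$ and $I/J$ are approximately unital then so is $I$. Your bidual argument---showing that the support projection $e$ of $J$ is central in $A^{**}$, splitting $I^{**} = eA^{**} \oplus (1-e)I^{**}$, and checking that $e+f$ is a norm-one identity---is essentially a self-contained proof of that cited proposition in the case at hand. So the route is the same in spirit; you have unpacked what the paper cites. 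The one point worth keeping clean, as you note, is that $\norm{e+f}=1$: this follows because $e$, being a norm-one idempotent, is a genuine (orthogonal) projection, and its centrality then gives the $\ell^\infty$-splitting $A^{**} = eA^{**} \oplus (1-e)A^{**}$ isometrically.
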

\begin{proof}
The first assertion is easy.
The second assertion follows from \cite[Proposition 3.1]{BR} (as in e.g.\  
\cite[Section 6]{BRead}, where the analogue of the above 
result is proved for HSA's and certain one-sided ideals).   \end{proof}

{\bf Remark.} Note that since \cite[Proposition 3.1]{BR} is also valid for Arens regular Banach algebras, the previous result can be stated for such Banach algebras.
Similarly for Corollary \ref{cor-sp} below.

\begin{lemma}  \label{apasp}  Suppose that $A$ is an operator algebra such that $A^{**}$ is semiprime,
 and that $J$ is a closed ideal in $A$ such that $J^2$ has a cai.  Then $J = J^2$.
\end{lemma}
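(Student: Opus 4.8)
The plan is to pass to the second dual $A^{**}$ and exploit the support projection of $J^2$. First I would record the easy fact that $J^2$ is itself a (norm-)closed two-sided ideal of $A$: indeed $A J^2 = (AJ)J \subseteq JJ = J^2$, and likewise $J^2 A \subseteq J^2$. Consequently $(J^2)^{\perp\perp}$ is a weak*-closed two-sided ideal of $A^{**}$, and since $J^2$ has a cai, $(J^2)^{\perp\perp}$ has an identity $p$ — the support projection of $J^2$, obtained as a weak* limit point of the cai. Note also that $(J^2)^{\perp\perp} \subseteq J^{\perp\perp}$ since $J^2 \subseteq J$, and that $J^{\perp\perp}$ is a weak*-closed two-sided ideal of $A^{**}$ because $J$ is an ideal of $A$.

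The crux is two observations. First, $p$ is central in $A^{**}$: for any $x \in A^{**}$ both $xp$ and $px$ lie in the ideal $(J^2)^{\perp\perp}$, and since $p$ is a two-sided identity for that ideal we get $pxp = xp$ and $pxp = px$, whence $xp = px$. Second, since the Arens product on $A^{**}$ is separately weak* continuous and $J \cdot J \subseteq J^2$, one deduces $J^{\perp\perp} \cdot J^{\perp\perp} \subseteq (J^2)^{\perp\perp}$ (fix an element of $J$ on one side and pass to the weak* closure, then repeat on the other side). Now put $K = \{ x \in J^{\perp\perp} : px = 0 \}$. This is weak*-closed, being the intersection of $J^{\perp\perp}$ with the kernel of the weak*-continuous map $x \mapsto px$, hence norm-closed; and it is a two-sided ideal of $A^{**}$, since for $x \in K$ and $a \in A^{**}$ we have $ax, xa \in J^{\perp\perp}$ and $p(ax) = (pa)x = (ap)x = a(px) = 0$, similarly $p(xa) = (px)a = 0$. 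Finally $K^2 = (0)$: given $x, y \in K$, the product $xy$ lies in $J^{\perp\perp} J^{\perp\perp} \subseteq (J^2)^{\perp\perp}$, so $xy = p(xy) = (px)y = 0$.

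Since $A^{**}$ is semiprime and $K$ is a norm-closed two-sided ideal of $A^{**}$ with $K^2 = (0)$, it follows that $K = (0)$. But for any $x \in J^{\perp\perp}$ the element $x - px$ lies in $J^{\perp\perp}$ and is annihilated by $p$, hence lies in $K$, so $x = px \in (J^2)^{\perp\perp}$; thus $J^{\perp\perp} \subseteq (J^2)^{\perp\perp}$, and with the reverse inclusion $J^{\perp\perp} = (J^2)^{\perp\perp}$. Intersecting with $A$ (identified canonically inside $A^{**}$, so that $J = A \cap J^{\perp\perp}$ and $J^2 = A \cap (J^2)^{\perp\perp}$) yields $J = J^2$. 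The main obstacle — essentially the only substantive point — is recognizing that the support projection of $J^2$ is forced to be central in $A^{**}$ and that the complementary corner $K$ of $J^{\perp\perp}$ is a genuine norm-closed two-sided ideal with vanishing square, so that semiprimeness of $A^{**}$ can be brought to bear; the separate-weak*-continuity step giving $J^{\perp\perp} J^{\perp\perp} \subseteq (J^2)^{\perp\perp}$ is the one place where a little care is needed. (Equivalently, one may phrase the argument using $1 - p$ in the unitization $(A^1)^{**}$.)
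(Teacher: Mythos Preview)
Your proof is correct and follows essentially the same route as the paper: both take the support projection $p$ of $J^2$ in $A^{**}$, show that the ``complementary'' piece $J^{\perp\perp}(1-p)$ (your $K$) squares to zero, and invoke semiprimeness of $A^{**}$. The paper is terser --- it writes $J^{\perp\perp}(1-p)$ directly and leaves the centrality of $p$ implicit --- whereas you spell out that $p$ is central and phrase $K$ as a kernel, but the substance is identical.
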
  

\begin{proof}   Let $p$ be the support projection
of $J^2$ in $A^{**}$.  We have $J^2 (1-p) = 0$.   If $\zeta, \eta \in J^{\perp \perp}$
and if $a_t \to \eta$ weak* and $b_s \to \zeta$, then since $a_t b_s (1-p) = 0$
we have $a_t \zeta (1-p)  = 0$ and $0 = \eta \zeta (1-p) = \eta (1-p) \zeta (1-p)$.
So $(J^{\perp \perp} (1-p))^2 = (0)$.  Since $A^{**}$ is semiprime we have $J^{\perp \perp} (1-p)
= (0)$, so that $J^{\perp \perp}$ is unital, so that $J$ has a cai.   \end{proof}

\begin{corollary} \label{cor-sp} Suppose that
 $A$ is an approximately unital operator algebra, 
 and that $J$ is an approximately unital ideal in $A$.
Suppose that  $J$ has  the property that if $I$ is an ideal with square $J$ then $I = J$.  Then 
 $A/J$ is semiprime.  In particular, if $A^{**}$ is semiprime then 
$A/J$ is semiprime,  for every approximately unital ideal $J$ in $A$.
\end{corollary}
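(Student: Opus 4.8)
The plan is to use the Correspondence Theorem for Banach algebras recalled above to reduce the semiprimeness of $A/J$ to a statement about ideals of $A$, and then to feed in the hypothesis on $J$. Let $q : A \to A/J$ be the quotient homomorphism and let $K$ be a (closed) ideal of $A/J$ with $K^2 = (0)$; we must show $K = (0)$. By the Correspondence Theorem, $K = I/J$ where $I = q^{-1}(K)$ is a closed ideal of $A$ with $J \subset I \subset A$. The first thing to pin down is that $K^2 = (0)$ is equivalent to $I^2 \subset J$. Indeed, products in $A/J$ of elements of $I/J$ are precisely the cosets $i_1 i_2 + J$ with $i_1, i_2 \in I$, and since $q$ is a quotient map it carries $\overline{I^2 + J}$ onto $K^2$ (here $I^2$ is, as in the paper's convention, the closed span of products from $I$); so $K^2 = \overline{I^2 + J}/J$, which is $(0)$ exactly when $I^2 \subset J$, using that $J$ is closed. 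This bookkeeping with closures is the only point needing a little care.

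Next I would use that $J$ is approximately unital: if $(e_t)$ is a cai for $J$, then $e_t j \to j$ for $j \in J$ with $e_t j \in J^2$, so $J = J^2$. Since $J \subset I$ we have $J^2 \subset I^2$, hence $J = J^2 \subset I^2 \subset J$, i.e.\ $I^2 = J$. Thus $I$ is an ideal of $A$ with square equal to $J$, so the hypothesis on $J$ forces $I = J$; that is, $K = I/J = (0)$. Hence $A/J$ is semiprime.

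For the final (``in particular'') assertion I would just check that when $A^{**}$ is semiprime, every approximately unital ideal $J$ of $A$ has the stated property. So suppose $I$ is any ideal of $A$ with $I^2 = J$. Then $I^2$ has a cai, so Lemma~\ref{apasp}, applied with $I$ in place of its ``$J$'', yields $I = I^2 = J$. Hence $J$ satisfies the hypothesis of the first part of the corollary, so $A/J$ is semiprime.

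I do not anticipate a genuine obstacle: the substance is carried by Lemma~\ref{apasp} (and behind it the semiprimeness of $A^{**}$), while the rest is the Correspondence Theorem together with the elementary facts that $J = J^2$ for an approximately unital $J$ and that $J \subset I$ implies $J^2 \subset I^2$. The one spot to be careful about is the identification $K^2 = \overline{I^2 + J}/J$, i.e.\ that the convention ``$XY$ = closure of span of products'' interacts correctly with the quotient map.
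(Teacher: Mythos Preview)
Your argument is correct and follows essentially the same route as the paper's proof: lift $K$ to $I=q^{-1}(K)$ via the Correspondence Theorem, deduce $I^2=J$, and invoke the hypothesis (respectively Lemma~\ref{apasp} for the ``in particular''). You are in fact more explicit than the paper at the one delicate point, namely the passage from $K^2=(0)$ to $I^2=J$: the paper simply writes $K^2=I^2/J$ and concludes, whereas you spell out that $K^2=(0)$ gives only $I^2\subset J$ and then use $J=J^2\subset I^2$ (from the cai of $J$ and $J\subset I$) to obtain equality.
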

\begin{proof} Let $K$ be an ideal in $A/J$ such that $K^2 =(0)_{A/J}$.  There exists an 
ideal $I$ in $A$ such that $J \subset I \subset A$ and $K = I/J$ (namely, 
the inverse image of $K$ in $A$, see Theorem \ref{COTau}). Since $K^2 =(I/J)(I/J)= I^2/J = (0)_{A/J}$, we conclude that $I^2 = J$.      Under our hypotheses this forces $I = J$ (the `In particular'
assertion uses Lemma \ref{apasp} here).  
That is, $K = I/J = (0)_{A/J}$, and so $A/J$ is semiprime.
\end{proof}

{\bf Remarks.}  1) \ In view of the last results, and independently, it is of interest to know whether 
every approximately unital ideal $J$ in  a semisimple or semiprime algebra $A$ has the property 
that  if $I$ is an ideal with square $J$ then $I = J$.  We will see in the next section
after Lemma \ref{ssq}  that this is false.  

2) \ Algebras whose square (or $n$th power) is approximately unital are discussed  in \cite[Section 3]{BReadII}.

\begin{proposition} \label{ab}
Let $A$ be a Banach algebra with no nonzero left
annihilators, and let $\{I_\alpha\}$ be an increasing family of
ideals in $A$ such that $A = \overline{\cup{I_\alpha}}$. If each $I_\alpha$ is
semiprime (resp. semisimple), then $A$ is semiprime (resp. semisimple).
\end{proposition}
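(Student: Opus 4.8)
The plan is to treat the semiprime and semisimple cases in parallel, in each case isolating the ideal $N$ of $A$ that ought to vanish and showing $N \cap I_\alpha = (0)$ for every $\alpha$; the hypothesis on left annihilators then finishes the argument. In the semiprime case $N$ will be an arbitrary closed ideal of $A$ with $N^2 = (0)$; in the semisimple case $N = J(A)$.

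First I would record the common final step. Suppose $N \cap I_\alpha = (0)$ for all $\alpha$. Since $N$ and each $I_\alpha$ are ideals of $A$, we get $N I_\alpha \subset N \cap I_\alpha = (0)$. Given $x \in N$ and $a \in A$, choose a sequence $a_n$ in $\bigcup_\alpha I_\alpha$ with $a_n \to a$ (possible since $A = \overline{\bigcup_\alpha I_\alpha}$); then $x a_n = 0$ for all $n$, so $xa = \lim_n x a_n = 0$ by continuity of multiplication. Hence $NA = (0)$, so every element of $N$ is a left annihilator of $A$, and therefore $N = (0)$.

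It remains to check $N \cap I_\alpha = (0)$, which is the only place the two cases diverge. In the semiprime case this is direct: $N \cap I_\alpha$ is a closed ideal of $I_\alpha$ — one checks $(N \cap I_\alpha) I_\alpha$ and $I_\alpha (N \cap I_\alpha)$ lie in $N \cap I_\alpha$ using that $N$ and $I_\alpha$ are ideals in $A$ — and $(N \cap I_\alpha)^2 \subset N^2 = (0)$, so semiprimeness of $I_\alpha$ forces $N \cap I_\alpha = (0)$. In the semisimple case I would invoke the standard Banach algebra fact that $J(I) = I \cap J(A)$ for a closed two-sided ideal $I$ of $A$ (see e.g.\ \cite{Pal}); applied to $I = I_\alpha$, which is semisimple, this gives $J(A) \cap I_\alpha = J(I_\alpha) = (0)$. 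The one genuinely non-formal ingredient is this radical identity; everything else is elementary ideal arithmetic plus a single density-and-continuity step, together with reading ``no nonzero left annihilators'' as the statement that $xA = (0)$ implies $x = 0$.
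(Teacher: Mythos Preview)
Your proof is correct and follows essentially the same approach as the paper: intersect the candidate ideal $N$ (a square-zero ideal or the Jacobson radical) with each $I_\alpha$, use semiprimeness or the identity $J(I_\alpha) = I_\alpha \cap J(A)$ from \cite{Pal} to conclude these intersections vanish, and then use $N I_\alpha \subset N \cap I_\alpha = (0)$ together with density and the left-annihilator hypothesis to kill $N$. The only cosmetic difference is that you spell out the density-and-continuity step a bit more explicitly.
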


\begin{proof}   For each $\alpha$, assume that $I_\alpha$ is semiprime.
Let $J$ be an ideal in $A$  with $J^2 = (0)$. Then $(J \cap I_\alpha)^2 = (0)$
for each $\alpha$.
 Since $I_\alpha$ is semiprime, $J \cap I_\alpha =(0)$.
Hence $J I_\alpha \subset  J \cap I_\alpha =(0)$ for each $\alpha$, so that $J A = (0)$ and
$J= (0)$. Hence $A$ is semiprime.

Now suppose that each $I_\alpha$ is semisimple. Notice that $J=\text{Rad}(A)$
is an ideal in $A$ and $J \cap I_\alpha$ is an ideal in
$I_\alpha$ for each $\alpha$. Then $J \cap I_\alpha = \text{Rad}(I_\alpha) =(0)$
by  \cite[Theorem 4.3.2]{Pal}. Hence as in the
first paragraph, $J= (0)$ and $A$ is semisimple.
    \end{proof}

\begin{proposition} Suppose that $A$ is an operator algebra with a bai, and that
$I$ and $J$ are  ideals in $A$. If $A/I \cong  A/J$ isomorphically as $A$-bimodules, then $I = J$.
\end{proposition}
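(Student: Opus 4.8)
The plan is to recover each ideal as an annihilator of an associated module and then use that such annihilators are an isomorphism invariant. Fix a bai $(e_t)$ for $A$, and give $A/I$ and $A/J$ their natural $A$-bimodule structures, $a \cdot (x + I) = ax + I$ and $(x+I) \cdot a = xa + I$, and similarly for $J$. An $A$-bimodule isomorphism $\phi : A/I \to A/J$ is in particular a bijective left $A$-module map, and it is only that part of the structure I will use.

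The first step is to show that for any closed two-sided ideal $I$ in $A$ one has $I = \{ a \in A : a \cdot (A/I) = (0) \}$ (and likewise with the right action); that is, $I$ is exactly the left annihilator of $A/I$ inside $A$. Here $a \cdot (A/I) = (0)$ means precisely $aA \subseteq I$. The inclusion $I \subseteq \{ a : aA \subseteq I \}$ holds because $I$ is a right ideal, and conversely if $aA \subseteq I$ then $a e_t \in I$ for all $t$, so $a = \lim_t a e_t \in I$ since $I$ is closed. This is the only place the bai enters, and it is essential: without it the left annihilator of $A/I$ can properly contain $I$, so this is the step I would state most carefully.

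The second step is routine. If $\phi : A/I \to A/J$ is a bijective left $A$-module map, then it carries the left annihilator of $A/I$ onto the left annihilator of $A/J$: if $a \cdot (A/I) = (0)$ and $n \in A/J$, choose $m$ with $\phi(m) = n$, whence $a \cdot n = a \cdot \phi(m) = \phi(a \cdot m) = \phi(0) = 0$; the reverse inclusion uses injectivity of $\phi$. Combining the two steps, $I = \{a : a \cdot (A/I) = (0)\} = \{a : a \cdot (A/J) = (0)\} = J$.

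I would close with the remark that, since only a bijective one-sided module map was used, the hypothesis may be weakened to $A/I \cong A/J$ as left $A$-modules (or as right $A$-modules); there is no real obstacle in the proof beyond flagging the necessity of the bounded approximate identity.
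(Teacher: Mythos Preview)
Your proof is correct and is genuinely different from the paper's. The paper reduces to the unital case by passing to the bidual: since $A$ has a bai, $A^{**}$ is unital, and the second adjoint $\pi^{**}$ of the bimodule isomorphism $\pi$ becomes an $A^{**}$-bimodule isomorphism $A^{**}/I^{\perp\perp}\to A^{**}/J^{\perp\perp}$ by separate weak* continuity of the Arens product; the unital case then yields $I^{\perp\perp}=J^{\perp\perp}$, whence $I=A\cap I^{\perp\perp}=A\cap J^{\perp\perp}=J$. You instead work entirely inside $A$, identifying $I$ with the left annihilator $\{a\in A: aA\subset I\}$ of $A/I$ (the bai being used exactly to get $a=\lim_t ae_t\in I$ from $aA\subset I$), and then note that this annihilator is preserved under any bijective left $A$-module map. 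Your argument is more elementary---no Arens products, no second duals, no appeal to a separate unital result---and, as you observe, it actually delivers a stronger conclusion: a bijective one-sided $A$-module map suffices, whereas the paper's route through $\pi^{**}$ as an $A^{**}$-\emph{bimodule} map uses both sides. The paper's approach, on the other hand, illustrates a recurring technique in this setting of trading a bai for a genuine identity by passing to the bidual.
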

\begin{proof} If  $A$ is unital then this
follows from the analogous result in pure algebra.
If $A$ contains a bai, then its bidual $A^{**}$ is unital. If $\pi: A/I \to A/J$ is an $A$-bimodule isomorphism,  then $\pi^{**}$ maps $(A/I)^{**} \cong A^{**}/I^{\perp \perp}$ into $(A/J)^{**} \cong A^{**}/J^{\perp \perp}$. Moreover, $\pi^{**}$ is an $A^{**}$-bimodule isomorphism by the separate weak$^*$-continuity of the Arens product.
 Since
$A^{**}$ is unital, by the unital case we have $I^{\perp \perp} = J^{\perp \perp}$. Thus $I = A \cap I^{\perp \perp} = A \cap J^{\perp \perp} = J$.
\end{proof}

{\bf Remark.} Note that the previous proposition is not true for general operator algebras; the existence of a bai is needed. For example, let $A=\spn(x,y)$ where $xy= x^2 = y^2 =0$. If $I=\spn(x)$ and $J=\spn(y)$, then $A/I \cong A/J$ as $A$-bimodules, but $I \neq J$. 

\section{Example: adjoining a root to an algebra}

In this section we show how to create examples of operator algebras by adjoining a root.  We then use
this to answer several basic questions regarding operator algebras with cai.

If $A$ is an algebra, and $S$ is in the center of $A$, we define an algebra $$A_S = \left\{ \left[
\begin{array}{ccl}  x & y \\ Sy & x \end{array} \right] : x \in A, y \in A^1 \right\} \subset M_2(A^1) .$$ 
We identify $A$ with the main diagonal of this algebra, and we set $T$ to be the matrix with rows
$0, 1$ and $S, 0$.  Then any element of $A_S$ may be written as $x + y T$ for $x \in A, y \in A^1$.
In this notation, $T^2 = S$, and so now $S$ has a square root even if it did not have one before.
A good example to bear in mind is the case that $A$ is the approximately unital  ideal in  the
disk algebra $A(\Ddb)$ of functions vanishing at $1$, and $S = z(1-z) \in A$, which has no root in $A$.  

It is obvious that if $A$ is an operator algebra
then so is $A_S$, and  if $A$ is commutative  then so is $A_S$.  If $A$ has a cai but no identity then
$A_S$ has no cai, but $A^2_S = \{ x + y T : x, y \in A \}$ does have a cai.

We say that an element $a$ in an algebra $A$ has no rational square root if there exists no 
$b, c \in A$, with $a c^2 = b^2 \neq 0$.  

\begin{lemma} \label{ssq}  Suppose that $A$ is a commutative semisimple algebra,
 and $S \in A, S \neq 0$.  If $A$ is an integral domain then $A_S$ is semiprime.
On the other hand, if $A$ is semisimple and $S$ has no rational square root, and is not a divisor of zero,
then  $A_S$ is semisimple.  
\end{lemma}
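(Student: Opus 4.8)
The plan is to compute explicitly the Jacobson radical (resp.\ the nilpotent ideals) of $A_S$ by exploiting the concrete $2\times 2$ matrix description $x + yT$ with $T^2 = S$. Since $A_S$ is commutative, semiprimeness amounts to having no nonzero nilpotent elements, and semisimplicity (for a commutative algebra) can be attacked via characters or via the fact that $J(A_S)$ is a nil ideal together with the integral-domain-type hypotheses on $S$. First I would set up the key algebraic identity: an element $w = x + yT \in A_S$ satisfies $w^2 = (x^2 + Sy^2) + (2xy)T$, and more generally the subalgebra structure is governed by the relation $T^2 = S$. So $w^2 = 0$ forces $x^2 + Sy^2 = 0$ and $2xy = 0$ in $A^1$ (here one uses that $A$, hence $A^1$, is commutative; note $A^1$ is an integral domain when $A$ is, since $A^1 = A$ or $A \oplus \mathbb{C}$ and in the relevant case $A$ is an ideal in $A^1$).

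For the first assertion: assume $A$ is an integral domain and suppose $w = x+yT$ generates an ideal $I$ in $A_S$ with $I^2 = (0)$; in particular $w^2 = 0$, i.e.\ $x^2 = -Sy^2$ and $xy = 0$. If $y = 0$ then $x^2 = 0$, so $x = 0$ (integral domain) and $w = 0$. If $y \neq 0$, then from $xy = 0$ we get $x = 0$, hence $Sy^2 = 0$, hence $S = 0$ (integral domain, $S \ne 0$) — contradiction. Actually I must be careful: semiprime means no nonzero ideal of square zero, not merely no nilpotent element, but since $A_S$ is commutative these coincide (if $w^2 = 0$ then $(A_S w)^2 \subset A_S w^2 = 0$ using commutativity, wait we need $w A_S w$; commutativity gives $w a_1 w a_2 = w^2 a_1 a_2 = 0$, so the ideal generated by a square-zero element has square zero — and conversely). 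So semiprimeness of $A_S$ reduces exactly to the nonexistence of nonzero square-zero elements, which the above computation handles. One should double-check the edge case where $w$ lies in the "identity component" $\mathbb{C}1$, but the same identities apply in $A^1$.

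For the second assertion: assume $A$ is semisimple, $S$ has no rational square root, and $S$ is not a zero divisor. Here I would argue that $J(A_S) = (0)$. Since $J(A_S)$ is a nil ideal in a commutative algebra, it suffices to show $A_S$ has no nonzero nilpotent element — and in fact, by a standard reduction, it is enough to rule out square-zero elements once we know $J(A_S)$ is nil, OR to directly show every nilpotent is zero. Take $w = x + yT$ nilpotent; I expect the cleanest route is: $w$ nilpotent $\Rightarrow w^2$ nilpotent $\Rightarrow$ (by the first part's computation applied in a semiprime-style argument) reduce to $x^2 + Sy^2 = 0$, $xy = 0$. From $xy = 0$ and the fact that $S$ is not a zero divisor, together with $x^2 = -Sy^2$: if $y \ne 0$ then $x^2 = -Sy^2 = -S y^2$; multiplying $xy=0$ appropriately, and using that $A$ is semisimple (hence has no nonzero nilpotents? no — semisimple commutative need not be reduced in general, but for these operator-algebra contexts the radical vanishes, and a commutative semisimple algebra has trivial nil ideals) one forces $x = 0$ via $x^2$ being "almost" a square, then $Sy^2 = 0$ gives $y = 0$ since $S$ is not a zero divisor. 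The "no rational square root" hypothesis is what I expect to need in order to handle nilpotents $w$ with $w^2 \neq 0$ but $w^n = 0$: writing out $w^2 = u + vT$ with $u = x^2+Sy^2$, $v = 2xy$, the condition that $w^2$ be itself nilpotent and eventually the existence of a relation of the form $S c^2 = b^2$ appears, which the hypothesis forbids unless $b = 0$.

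The main obstacle, I expect, is the bookkeeping in the second assertion: reducing from an arbitrary nilpotent $w$ (not just square-zero) to a usable polynomial identity, and correctly locating where "$S$ has no rational square root" is forced to intervene versus where "$S$ not a zero divisor" suffices. A subsidiary subtlety is keeping track of the unitization: $y$ ranges over $A^1$, not $A$, so the zero-divisor and domain hypotheses must be transferred from $A$ to $A^1$, which is legitimate in the cases at hand but should be stated. I would organize the write-up as: (1) the basic multiplication identities in $A_S$; (2) the square-zero analysis giving semiprimeness under the integral-domain hypothesis; (3) the nilpotent analysis giving semisimplicity, invoking that $J$ is nil plus the two hypotheses on $S$.
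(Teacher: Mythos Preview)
Your argument for the first assertion (integral domain $\Rightarrow$ $A_S$ semiprime) is correct and matches the paper's: reduce semiprimeness of a commutative algebra to the absence of nonzero square-zero elements, and dispatch $(x+yT)^2=0$ by the integral-domain hypothesis.

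For the second assertion there is a genuine gap. Your plan hinges on the claim that ``$J(A_S)$ is a nil ideal in a commutative algebra, so it suffices to show $A_S$ has no nonzero nilpotent element.'' This is false in the Banach-algebra setting: the Jacobson radical of a commutative Banach algebra need not consist of nilpotent elements. Indeed the paper itself later studies commutative radical operator algebras that are \emph{integral domains} (for instance the algebra generated by the Volterra operator), where $J=A\neq 0$ yet there are no nonzero nilpotents at all. So proving that $A_S$ has no nonzero nilpotent would not yield $J(A_S)=0$. Your subsequent bookkeeping about higher-order nilpotents and the entry point of the ``no rational square root'' hypothesis is therefore aimed at the wrong target.

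The paper's approach is instead via characters, which is the natural tool for commutative semisimple algebras. Given a character $\chi$ of $A^1$ and a choice of square root $\alpha$ of $\chi(S)$, the formula $\chi'(a+bT)=\chi(a)+\alpha\chi(b)$ defines a character on $A_S$. If $x+yT$ were annihilated by every character of $A_S$, then using both choices of $\alpha$ one obtains $\chi(x)^2=\chi(S)\chi(y)^2$, i.e.\ $\chi(x^2-Sy^2)=0$ for every $\chi$; semisimplicity of $A$ gives $x^2=Sy^2$. At this point the ``no rational square root'' hypothesis forces $x^2=0$, hence $x=0$ (semisimple $\Rightarrow$ semiprime), and then $Sy^2=0$ together with ``$S$ not a zero divisor'' gives $y=0$. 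This is precisely where those two hypotheses on $S$ enter, and the argument is about separating characters rather than about nilpotency.
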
  \begin{proof}  If $(x + yT)^2 = x^2 + y^2 S + 2xy T = 0$, and $A$ is an integral domain, then 
$x = 0$ or $y = 0$.  Since $x^2 + y^2 S = 0$ we have $x = y = 0$.   

 In the  semisimple case, suppose that all characters of $A_S$ vanish at $x + yT
\in A_S$.   If $\chi$ is a character of $A^1$, define $\chi'(a + bT) = \chi(a) + \alpha \chi(b)$ 
for $a \in A, b \in A^1$, where $\alpha$ is a square root of $\chi(S)$.   This defines a character on 
$A_S$, and so    we have $\chi(x) + \alpha \chi(y) = 0$.  Thus
$x^2 - S y^2$ is in the kernel of every   character of $A^1$, so that $S y^2 = x^2$ .   Thus
$x^2 = 0$, and since $A$ is semiprime we have $x = y = 0$.  
\end{proof} 

In our disk algebra example mentioned above, 
the element $z(1-z) \in A$ has no rational square root.  To see this note that of course $1-z$ does,
and so we are asking if $z g^2 = h^2$ is possible with $g, h \in A(\Ddb)$.  By Riemann's theorem
in basic complex analysis
this equation implies that $h/g$ has an analytic continuation $k$ to $\Ddb$ such that $k(z)^2 = z$ on 
$\Ddb$, which is well known to be impossible ($2 k(z) k'(z) = 1$ so $k'$ is unbounded at $0$).
We deduce from Lemma \ref{ssq} that $A_S$ is semisimple if $S = z (1-z)$.  Here 
$A_S$ is a semisimple commutative operator algebra
with no cai, but $A_S^2$ has a cai.  This solves the  question posed in  the Remark after 
Corollary \ref{cor-sp}  in the negative.  

In \cite{ReadDaws} it is shown that  semisimple $B(\ell^p)$ fails to have a semisimple
second dual if $p \neq 2$.  This can also happen for operator algebras: 

\begin{proposition} \label{isnotsp}   Let $A$ be an operator algebra. 
\begin{itemize}  \item [(1)]  If  $A^{**}$ is semiprime (resp.\  radical, semisimple and commutative) then $A$ is semiprime (resp.\  radical,
semisimple).
 \item [(2)] If $A$ is semiprime (resp.\ approximately unital and  radical, unital and semisimple)
then  $A^{**}$  need not be semiprime (resp.\ radical, semiprime and hence not semisimple). 
\end{itemize}   \end{proposition}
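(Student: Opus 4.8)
The plan is to prove the two parts by quite different methods. Part (1) should follow by a soft "bidual trick" argument. For the semiprime statement, suppose $J$ is a closed ideal in $A$ with $J^2 = (0)$; then $J^{\perp\perp}$ is a weak* closed ideal in $A^{**}$, and by separate weak* continuity of the Arens product together with the fact that $J \cdot J = (0)$, one gets $(J^{\perp\perp})^2 = (0)$ (approximate elements of $J^{\perp\perp}$ by nets in $J$ and pass to the limit in each variable separately). Since $A^{**}$ is semiprime this forces $J^{\perp\perp} = (0)$, hence $J = A \cap J^{\perp\perp} = (0)$. The radical statement is even softer: $J(A) = J(A^1)$ and one checks $J(A)^{\perp\perp} \subseteq J(A^{**})$, or simply that the radical of $A^{**}$ meets $A$ in an ideal contained in $J(A)$, so $J(A^{**}) = (0)$ gives $J(A) = (0)$; for "radical" one argues dually, that $A^{**}$ radical forces $A$ radical since $A \hookrightarrow A^{**}$. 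The commutative semisimple case reduces to the semiprime case via the standard fact that a commutative semiprime Banach algebra need not be semisimple in general, so here one should instead observe that characters of $A$ extend to $A^{**}$ (weak* continuous ones), or invoke that for commutative algebras semisimple is equivalent to semiprime once one knows enough characters — this is the step that needs a little care, and I would lean on the fact that a commutative semisimple $A^{**}$ has separating characters whose restrictions separate $A$.

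For part (2), the strategy is to exhibit explicit counterexamples, and the adjoining-a-root construction of this section is exactly the tool. Take $A$ to be the ideal in the disc algebra $A(\mathbb{D})$ of functions vanishing at $1$, which is approximately unital, commutative, and semisimple, and form $A_S$ with $S = z(1-z)$. By Lemma \ref{ssq} and the Riemann-continuation argument already given in the text, $A_S$ is semisimple; but $A_S$ has no cai while $A_S^2$ does, and the point is to produce from this a single algebra $B$ that is semiprime (or unital semisimple, etc.) with $B^{**}$ not semiprime. The cleanest route: consider $A_S$ itself or a unitization/modification of it and compute $A_S^{**}$. Since $T^2 = S$ and $S$ is a genuine element of the radical-free algebra but $S$ becomes invertible-like or acquires extra nilpotent behavior in the bidual (the bidual of the disc-algebra ideal is a big commutative von Neumann-algebra-flavored object where $z(1-z)$ has a square root $T$ that can interact with the idempotents supported where $z(1-z)$ vanishes), one should be able to locate a nonzero element $\eta$ in $A_S^{**}$ with $\eta^2 = 0$ arising from the point mass at $0$ (where $z$ vanishes): roughly, $T \cdot (\text{support projection at } 0)$ squares to $S$ times that projection, which is $0$.

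So the key steps, in order, are: (i) prove (1) by the weak* closure / separate continuity argument, handling radical by the inclusion $A \subseteq A^{**}$ and commutative-semisimple by extending characters; (ii) recall the concrete algebra $A = \{f \in A(\mathbb{D}) : f(1) = 0\}$ and its root extension $A_S$, $S = z(1-z)$, already shown semisimple; (iii) identify $A_S^{**}$ concretely enough to see that the element "$T$ localized at the zero set of $S$" is a nonzero square-zero element, contradicting semiprimeness of the bidual, and package a unital version for the "unital semisimple" clause (e.g. $(A_S)^1$ or $\mathbb{C} \oplus A_S$); (iv) note the parenthetical implications ("hence not semisimple", "semiprime and hence not semisimple") are then immediate since semisimple $\Rightarrow$ semiprime. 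The main obstacle I anticipate is step (iii): making precise exactly which weak* limit / support projection in $A_S^{**}$ witnesses the square-zero element, and checking it is genuinely nonzero — this requires understanding the bidual of the disc-algebra ideal (its multiplier algebra contains characteristic-function-like projections at boundary and interior points) well enough to see that multiplication by $T$ does not kill the relevant projection while $T^2 = S$ does. Everything else is either the routine bidual trick of (1) or already carried out in the discussion following Lemma \ref{ssq}.
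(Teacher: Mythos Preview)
For part~(1) your plan coincides with the paper's: the semiprime clause is exactly your weak*-closure argument, and for the radical and commutative-semisimple clauses the paper simply cites \cite[Proposition 2.6.25]{Dal}, which encodes the spectral-permanence and character-restriction facts you sketch.

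You have omitted the radical clause of part~(2). The paper handles it in one line: if $A$ is approximately unital then $A^{**}$ has an identity, and a unital algebra is never radical.

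For the semiprime and semisimple clauses of~(2) your route genuinely differs from the paper's. The paper argues \emph{indirectly} via Lemma~\ref{apasp}: if every unital semiprime operator algebra had semiprime bidual, then that lemma would force $J = J^2$ whenever $J^2$ has a cai; but taking $J = A_S$ inside $B = (A_S)^1$ one has $A_S^2$ approximately unital while $A_S \supsetneq A_S^2$ (since $A \subsetneq A^1$), a contradiction. No explicit nilpotent element is ever named. Your direct approach also works, and is in fact what the proof of Lemma~\ref{apasp} constructs implicitly: writing $A_S^{**} \cong A^{**} \oplus (A^1)^{**} \cdot T$ and letting $e$ be the support projection of $A$ in $(A^1)^{**}$, the element $(1-e)T$ satisfies $((1-e)T)^2 = (1-e)^2 S = (1-e)S = 0$ (since $S \in A$ gives $eS = S$) and is nonzero because $A$ is nonunital. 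Your one slip is the localization: the projection $1-e$ lives at $z = 1$, the point where functions in $A$ vanish and where the cai of $A$ fails to approximate~$1$, not at the interior point $z = 0$. At $z = 0$ the maximal ideal $z A(\Ddb)$ has no bounded approximate identity (a short maximum-modulus argument rules it out), so there is no support projection there to serve as your ``point mass at $0$''. With that correction your argument goes through and is arguably more transparent; the paper's indirect route has the compensating virtue of isolating the reusable Lemma~\ref{apasp}.
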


\begin{proof}   (1) \   If $A$ is commutative and $A^{**}$ is semisimple, then $A$ is semisimple by e.g.\
\cite[Proposition 2.6.25 (iv)]{Dal}.
If $A^{**}$ is semiprime and if $J^2 = (0)$ in $A$
then $(J^{\perp \perp})^2 = (0)$ in $A^{**}$, so that 
$(0) = J^{\perp \perp} = J$.   So $A$  is semiprime.

It follows from \cite[Proposition 2.6.25 (iii)]{Dal} 
that if $A^{**}$ is radical then $A$ is radical.  

(2) \ If $A$ is radical then $A^{**}$ is  not radical (indeed $A^{**}$ is unital). 

Suppose that the second dual of every  
unital semiprime operator algebra $A$ was semiprime.  Then by Lemma  \ref{apasp}, if 
 $J$ is an ideal in 
a unital semisimple operator algebra such that $J^2$ has a cai, then $J$ has a cai.
However this is not true, as may be seen from the disk algebra example two paragraphs above (one may take $J = A_S, A = J^1$ here).  Hence 
the second dual of a commutative unital  semisimple
 operator algebra need not be semiprime.    \end{proof}  

 We shall see
in Corollary \ref{sssp} that the situation in the last result improves if $A$ is a HSA in its bidual.  

\section{General facts about HSA's}

\begin{proposition} \label{qi} If $D$ is a HSA in an operator algebra $A$,
then $x \in D$ is quasi-invertible in $D$ iff $x$ is quasi-invertible in $A$.
Thus $\sigma_D(x) \setminus \{ 0 \} = \sigma_A(x)  \setminus \{ 0 \}$.
In particular, $D$ is a spectral subalgebra of $A$ in the sense of e.g.\ 
{\rm \cite[p.\ 245]{Pal}}.
\end{proposition}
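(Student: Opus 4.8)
The plan is to establish the nontrivial direction: if $x \in D$ is quasi-invertible in $A$, then its quasi-inverse already lies in $D$. The reverse implication is trivial, since $D^1 \subseteq A^1$ and a quasi-inverse computed in $D$ serves as one in $A$. Granting the equivalence of quasi-invertibility in $D$ and in $A$ for elements of $D$, the spectral identity follows from the standard fact that for $\lambda \neq 0$ one has $\lambda \in \sigma_B(z)$ precisely when $z/\lambda$ fails to be quasi-invertible in $B$, valid in any (possibly nonunital) algebra $B$ with $z \in B$ --- here using that the quasi-inverse of an element of $B$ automatically lies in $B$, because $B$ is an ideal in $B^1$. The ``spectral subalgebra'' assertion is then immediate from Palmer's definition.

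So suppose $y \in A$ satisfies $x + y - xy = 0 = x + y - yx$. Rearranging gives $y = x(y-1)$ and $y = (y-1)x$, where $x \in D$ and $y - 1 \in A^1$. Let $(e_t)$ be a cai for $D$. Since $x \in D$ we have $e_t x \to x$ and $x e_t \to x$ in norm, whence $e_t y = e_t x (y-1) \to x(y-1) = y$ and $y e_t = (y-1) x e_t \to (y-1) x = y$. Combining these, $\norm{e_t y e_t - y} \leq \norm{e_t}\,\norm{y e_t - y} + \norm{e_t y - y} \to 0$. But $e_t y e_t \in DAD \subseteq D$ because $D$ is a HSA in $A$, and $D$ is norm closed, so $y \in D$. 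Then $(1-x)(1-y) = (1-y)(1-x) = 1$ in $D^1$, that is, $x$ is quasi-invertible in $D$, with quasi-inverse $y \in D$.

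I expect the only point requiring a little care is that $e_t y$ need not lie in $D$ --- it lies only in the r-ideal $DA$ --- so one is forced to symmetrize and work with $e_t y e_t$; the defining property $DAD \subseteq D$ of a hereditary subalgebra is exactly what delivers $y$ back into $D$. Everything else is routine manipulation of the quasi-inverse identity together with an elementary norm estimate, and the passage to spectra is the usual rescaling argument.
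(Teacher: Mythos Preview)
Your proof is correct, but it takes a genuinely different route from the paper's. The paper lifts the problem to the bidual: it quotes \cite[Proposition 2.6.25]{Dal} to say that $x$ is quasi-invertible in $D$ (resp.\ $A$) iff it is so in $D^{**}$ (resp.\ $A^{**}$), identifies $D^{**} = pA^{**}p$ for the support projection $p$, and then invokes the purely algebraic fact that in any algebra with an idempotent $e$, an element of $eAe$ is quasi-invertible there iff it is in the whole algebra. Your argument stays entirely inside $A$: you extract from the quasi-inverse equations that $y = x(y-1) = (y-1)x$, use the cai of $D$ to get $e_t y e_t \to y$, and then the HSA condition $DAD \subset D$ forces $y \in D$. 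Your approach is more elementary and self-contained (no appeal to Dales or to the bidual), while the paper's approach is perhaps more structural in that it makes transparent why the result holds --- the HSA is a ``corner'' $pA^{**}p$ once one passes to the bidual. Both are short; yours has the virtue of not importing an external reference.
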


\begin{proof}
By \cite[Proposition 2.6.25]{Dal}, $x$ is quasi-invertible in $D$
(resp.\ $A$) iff $x$ is quasi-invertible in $D^{**}$ (resp.\ $A^{**}$).
Now $D^{**} = pA^{**} p$, where $p$ is the support projection of $D$.
It is a simple algebraic exercise that in an algebra $A$ with an idempotent
$e$, an element of $eAe$ is quasi-invertible 
 in $eAe$ iff it is quasi-invertible in $A$.  So
$x$ is quasi-invertible in $D$
iff $x$ is quasi-invertible in $D^{**} = pA^{**} p$,
iff $x$ is quasi-invertible in $A^{**}$, and hence iff
$x$ is quasi-invertible in $A$.

The last assertion follows from the usual formulation
of the spectrum in terms of quasi-invertible elements.
\end{proof}

The following answers a question posed in \cite{ABS}.
The first assertion is well known with HSA's replaced by ideals 
\cite{Pal}.   
 
\begin{theorem}  \label{sspas}   If $D$ is a HSA in an operator algebra $A$,
then $J(D) = D \cap J(A)$.
In particular, semisimplicity passes to HSA's.
\end{theorem}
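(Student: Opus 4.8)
The plan is to prove both inclusions $J(D) \subset D \cap J(A)$ and $D \cap J(A) \subset J(D)$, exploiting the characterization of the radical via quasi-invertibility together with Proposition \ref{qi}. Recall that $J(A)$ can be described (see \cite{Pal}) as the set of $x \in A$ such that $ax$ is quasi-invertible in $A$ for every $a \in A$, equivalently the largest ideal consisting of quasi-invertible elements, equivalently $\{ x \in A : \lambda x \text{ is quasi-invertible for all scalars } \lambda \}$ intersected appropriately — but the cleanest route uses: $x \in J(A)$ iff $Ax$ (or $xA$, or $AxA$) consists of quasi-invertible elements, i.e. iff the principal ideal generated by $x$ is a quasi-invertible ideal. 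Since $D$ is a HSA, for $x \in D$ and $a, b \in D$ we have $axb \in DAD \subset D$, and Proposition \ref{qi} tells us that such an element is quasi-invertible in $D$ iff it is quasi-invertible in $A$; this is the engine that moves quasi-invertibility back and forth between $D$ and $A$.

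First I would show $D \cap J(A) \subset J(D)$: if $x \in D \cap J(A)$, then for any $a \in D \subset A$ the element $ax$ lies in $DD \subset D$ and is quasi-invertible in $A$ (as $x \in J(A)$), hence by Proposition \ref{qi} it is quasi-invertible in $D$; since this holds for all $a \in D$, we get $x \in J(D)$ by the quasi-invertibility characterization of the radical. For the reverse inclusion $J(D) \subset D \cap J(A)$, take $x \in J(D)$; I want to show $x \in J(A)$, i.e. that $axb$ is quasi-invertible in $A$ for all $a, b \in A$ — or more efficiently, I would pass to the bidual: $J(D)^{\perp\perp}$ sits inside $D^{**} = pA^{**}p$, and one knows $J(D) = D \cap J(D^{**})$ while $J(A) = A \cap J(A^{**})$ (via \cite[Proposition 2.6.25]{Dal}, which was already invoked in Proposition \ref{qi}). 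So it suffices to prove the statement for the unital algebra $A^{**}$ with the idempotent $p$, namely that $J(pA^{**}p) = pA^{**}p \cap J(A^{**})$; this is precisely the classical fact (pure algebra) that for a corner $eRe$ of a ring $R$ by an idempotent $e$, one has $J(eRe) = eRe \cap J(R) = e\,J(R)\,e$. Combining, $x \in J(D) \subset J(D^{**}) = pA^{**}p \cap J(A^{**})$, and intersecting with $A$ gives $x \in A \cap J(A^{**}) = J(A)$, while $x \in D$ is automatic; hence $x \in D \cap J(A)$.

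The main obstacle I anticipate is not the bidual bookkeeping but verifying the purely algebraic corner identity $J(eRe) = e\,J(R)\,e$ with the correct direction for both inclusions — the inclusion $e J(R) e \subset J(eRe)$ is routine (an ideal of quasi-invertibles restricts to one), while $J(eRe) \subset e J(R) e$ requires the observation that if $x \in J(eRe)$ then $RxR$ is a quasi-invertible ideal of $R$, using that $rxs$ for $r,s \in R$ is quasi-invertible iff $exe \cdot (\text{stuff})$ is, via the standard lemma that $ab$ quasi-invertible iff $ba$ quasi-invertible together with idempotency of $e$. This is exactly the kind of "simple algebraic exercise" already cited in the proof of Proposition \ref{qi}, so I would either cite it from \cite{Pal} or dispatch it in a line. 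Once that lemma is in hand, the final statement ``semisimplicity passes to HSA's'' is immediate: if $J(A) = (0)$ then $J(D) = D \cap J(A) = (0)$.
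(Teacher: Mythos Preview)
Your treatment of the inclusion $D \cap J(A) \subset J(D)$ is essentially the paper's: both use the quasi-invertibility characterization of the radical together with Proposition~\ref{qi}. (A small point: the characterization requires $ax$ quasi-invertible for all $a \in D^1$, not just $a \in D$; but $x \in J(A)$ is itself quasi-invertible in $A$ hence in $D$, so this is harmless.)

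The gap is in the reverse inclusion $J(D) \subset J(A)$. Your bidual reduction rests on the identity $J(D) = D \cap J(D^{**})$, and in particular on $J(D) \subset J(D^{**})$. This does \emph{not} follow from \cite[Proposition~2.6.25]{Dal}: that result transfers quasi-invertibility of a \emph{single} element between $A$ and $A^{**}$, and from it one can indeed deduce $A \cap J(A^{**}) \subset J(A)$ (for $a \in A^1 \subset A^{**}$ the product $ax \in J(A^{**})$ is quasi-invertible in $A^{**}$, hence in $A$). But for $J(A) \subset J(A^{**})$ one must show $\eta x$ is quasi-invertible for every $\eta \in A^{**}$, not merely for $\eta \in A^1$, and there is no weak*-continuity of quasi-invertibility or of the spectral radius to bridge this. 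Indeed, the introduction of this very paper flags the implication ``$A^{**}$ semisimple $\Rightarrow$ $A$ semisimple'' as not known in the noncommutative case --- and that implication would follow at once from $J(A) \subset J(A^{**})$. So your passage to the purely algebraic corner identity $J(pA^{**}p) = p\,J(A^{**})\,p$ is short-circuited at the first step.

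The paper's argument for $J(D) \subset J(A)$ stays inside $A$ and uses a different device: Cohen factorization writes $x \in J(D)$ as $x = dy$ with $d \in D$ and $y \in J(D)$; then for $a \in A^1$ one has $r(xa) = r(dya) = r(yad)$, and the HSA property (together with a cai for $D$ and closedness of $J(D)$) forces $yad \in J(D)$, whence $r_D(yad) = 0$ and so $r_A(xa) = 0$. The factorization is precisely what lets one rotate the ambient element $a$ into a position sandwiched between elements of $D$, where the HSA hypothesis applies.
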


\begin{proof}
Suppose that $A$ is an operator algebra and 
$D$ is a HSA in $A$.  We recall that $J(A)$ may be characterized
 (see e.g.\ 
\cite{Pal}) as the set of $a \in A$ with $r(ab) = 0$ for all $b \in A^1$. 
Here $r(\cdot)$ denotes the spectral radius.
  Let $x \in J(D)$ then
since $J(D)$ is a nondegenerate $D$-module, by Cohen's factorization there
exists $d \in D, y \in J(D)$ with $x = d y$.  Now $y f_t \, a d \in J(D)$
for all $a \in A^1$,
where $(f_t)$ is a cai for $D$ (since $D$ is a HSA in $A^1$).  Since
$J(D)$ is closed we have $y a d \in J(D)$.  Thus $0 = r(y a d) =
r(d y a) = r(xa)$ for all $a \in A^1$.  Hence $x
\in J(A)$.  So $J(D) \subset D \cap J(A)$.
The converse follows from \cite[Theorem 4.3.6 (c),(e)]{Pal}:   
 if $x \in D \cap J(A)$, then $A^1 x$ consists of 
quasi-invertibles in $A$.  Hence $D^1 x$ consists of
quasi-invertibles in $A$, hence of
quasi-invertibles in $D$ by Proposition \ref{qi}.    So $x \in J(D)$.  
\end{proof}

We have a generalization of the last result:

\begin{corollary} \label{innp}  Suppose that $D$ is a HSA in an operator algebra $A$,
and that $I$ is an approximately unital ideal in $A$.
Then \begin{itemize} \item [(1)]
$D \cap I = DID$ is a HSA in $A$, and $J(D \cap I) = J(D) \cap J(I)$.
\item [(2)] $(D \cap I)^{\perp \perp} = D^{\perp \perp} \cap I^{\perp \perp}$. 
\item [(3)] $D + I$ is closed, and is a HSA in $A$.
  \end{itemize} \end{corollary}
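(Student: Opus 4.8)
The plan is to work on the level of the biduals and support projections, reducing everything to facts about open projections in $A^{**}$ and the already-established Theorem \ref{sspas}. Let $p$ be the support projection of $D$ in $A^{**}$, so $D^{\perp\perp} = pA^{**}p$ and $p$ is open. Since $I$ is an approximately unital ideal, $I^{\perp\perp}$ is a (weak*-closed) ideal in $A^{**}$ with an identity element $q$, a central projection in $A^{**}$; note $q$ is open as well, and $I^{\perp\perp} = qA^{**}$. For (1) and (2) the key point is that $D \cap I$, being the intersection of a HSA and an ideal, should have support projection $p \wedge q = pq = qp$ (using centrality of $q$). First I would verify that $D \cap I = DID$: the inclusion $DID \subset D \cap I$ is immediate since $D$ is a HSA and $I$ an ideal; for the reverse, if $x \in D \cap I$ then by Cohen factorization in the nondegenerate $D$-bimodule $D \cap I$ (using the cai of $D$) one writes $x = d_1 x' d_2$ with $d_i \in D$, $x' \in D \cap I \subset I$, so $x \in DID$. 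This also shows $D \cap I$ has a cai (e.g. $(f_t z f_s)$ for a cai $(f_t)$ of $D$ and $(z_s)$ of $I$, or more cleanly: $DID$ is an approximately unital subalgebra since it is $(DI)(ID)$-type; alternatively cite that the intersection of a HSA with an approximately unital ideal is known to be a HSA — but I would prove it directly here). Then $D \cap I$ is a HSA in $A$, its support projection is $pq$, and (2) follows because $(D\cap I)^{\perp\perp} = pqA^{**}pq = (pA^{**}p) \cap (qA^{**}) = D^{\perp\perp} \cap I^{\perp\perp}$, the middle equality being the standard identity for a projection meeting a central projection.

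For the radical statement in (1), I would apply Theorem \ref{sspas} twice: $D \cap I$ is a HSA in $A$, so $J(D \cap I) = (D\cap I) \cap J(A)$; also $D \cap I$ is a HSA in $D$ (it equals $D q D$-style, or: $p q \leq p$ and $pq$ is open in $pA^{**}p$), so $J(D \cap I) = (D \cap I) \cap J(D)$. Similarly $D \cap I$ is a HSA in $I$ — here I should check $D \cap I = DID$ is a HSA in the algebra $I$, which follows since its support projection $pq \leq q$ is open in $I^{\perp\perp}$ — giving $J(D\cap I) = (D\cap I) \cap J(I)$. Intersecting, $J(D) \cap J(I) = (D \cap J(A)) \cap (I \cap J(A)) = (D \cap I) \cap J(A) = J(D \cap I)$, where the first equalities use Theorem \ref{sspas} for $D \subset A$ and $I \subset A$ (for the ideal $I$, $J(I) = I \cap J(A)$ is the classical fact referenced just before Theorem \ref{sspas}).

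For (3): closedness of $D + I$ follows from \cite[Proposition 2.4]{Dixon} exactly as in the proof of Theorem \ref{TIT}, since $D$ has a cai. To see $D + I$ is a HSA in $A$, compute its support projection. One expects $(D+I)^{\perp\perp} = D^{\perp\perp} + I^{\perp\perp}$ (the weak*-closed sum), which has identity $p \vee q$, the join of two open projections, hence open (joins of open projections are open; equivalently this is the support projection of the r-ideal sum). I would argue: $D + I$ is an approximately unital subalgebra of $A$ — a cai can be manufactured from cai's of $D$ and $I$ using that $I$ is an ideal, so products mix correctly — and $(D+I)A(D+I) \subset DAD + DAI + IAD + IAI \subset D + I + I + I = D + I$, using $DAD \subset D$ and the ideal property of $I$. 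Hence $D+I$ is a HSA in $A$ by definition. The main obstacle I anticipate is the bookkeeping around approximate identities: rigorously producing a cai for $D \cap I$ and for $D + I$, and making sure the support-projection identities $pq$ and $p \vee q$ are correct and that these projections are genuinely open in $A^{**}$ (for $pq$ one can note it is the weak* limit of $f_t z_s$; for $p\vee q$, joins of open projections are open). Everything else is a routine combination of Theorem \ref{sspas}, Proposition \ref{qi}, Cohen factorization, and elementary projection algebra.
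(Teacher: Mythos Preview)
Your proposal is essentially correct, and the argument for the Jacobson radical in (1) is exactly the paper's. There are two points of genuine difference worth noting.

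\textbf{Part (2).} Your route is direct and different from the paper's: you exploit that the support projection $q$ of an approximately unital two-sided ideal is \emph{central} in $A^{**}$, so the support projection of $D\cap I$ is $pq$, whence $(D\cap I)^{\perp\perp}=pqA^{**}pq=pA^{**}p\cap qA^{**}=D^{\perp\perp}\cap I^{\perp\perp}$. The paper instead deduces (2) from (3) via the general Banach-space fact that $(E\cap F)^{\perp\perp}=E^{\perp\perp}\cap F^{\perp\perp}$ whenever $E+F$ is closed. Your approach is cleaner here and avoids (3) entirely; the paper's has the advantage of being a purely Banach-space argument that does not need centrality of $q$.

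\textbf{Part (3): a gap in your citation.} You write that closedness of $D+I$ ``follows from \cite[Proposition 2.4]{Dixon} exactly as in the proof of Theorem~\ref{TIT}, since $D$ has a cai.'' But Dixon's result (and the situation in Theorem~\ref{TIT}) concerns a sum of two \emph{ideals}, whereas here $D$ is only a HSA. Dixon's argument does not apply verbatim; the paper explicitly modifies it. The key substitution is that for $r\in I$ one uses $e_\lambda r e_\lambda\in D\cap I$ (which needs the HSA property $DAD\subset D$) in place of $e_\lambda r\in J\cap K$, and then estimates
\[
\|a-r\|\ \ge\ \|e_\lambda a e_\lambda - e_\lambda r e_\lambda\|\ \ge\ \|a-e_\lambda r e_\lambda\|-\|a-e_\lambda a e_\lambda\|\ \ge\ \|a+(I\cap D)\|-\|a-e_\lambda a e_\lambda\|
\]
for $a\in D$. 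This is an easy fix, but you should make the modification explicit rather than cite Dixon directly.

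\textbf{Minor point on (1).} You invoke Cohen factorization to get $D\cap I\subset DID$. This works, but is heavier than necessary: since $D$ has a cai $(e_\lambda)$ and $DID$ is (by convention) closed, for $x\in D\cap I$ one simply has $e_\lambda x e_\lambda\in DID$ and $e_\lambda x e_\lambda\to x$, so $x\in DID$. The paper uses this one-line argument.
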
  

\begin{proof}  (1) \ 
We have that $(D \cap I) A (D \cap I) \subset (DAD) \cap (IAI) \subset D \cap I$.  
Note that $D I D \subset I \cap D$.  Conversely,
since $D$ has a cai we have $I \cap D \subset D I D$.
So $D I D = I \cap D$.    If $(f_s)$ is a cai for $I$ and $(e_\lambda)$ is a cai for $D$, 
then
$(e_\lambda f_s e_\lambda)$ is easily seen to yield a cai for $D I D$, 
by routine techniques.  so $I \cap D = D I D$  is a HSA in $A$.
By Theorem \ref{sspas}  we have $$J(D \cap I) = D \cap I \cap J(A) 
= D \cap  J(A) \cap I \cap J(A) = J(D) \cap J(A) $$   as
desired.     

(3) \ Write $(e_\lambda)$ for the cai of $D$.
If $r \in I$ then $e_\lambda r e_\lambda \in D \cap I$.
Moreover if $a \in D$ then
$$\Vert a- r \Vert \geq \Vert e_\lambda a e_\lambda - e_\lambda r e_\lambda \Vert
\geq  \Vert a  - e_\lambda r e_\lambda \Vert - \Vert a  - e_\lambda  a e_\lambda  \Vert.$$
Also, $\Vert a  - e_\lambda r e_\lambda \Vert \geq \Vert a + (I \cap D) \Vert$.
The above constitute the modifications of the proof of
 \cite[Proposition 2.4]{Dixon} that need to be made so that as in that proof we 
may deduce that $D + I$ is closed.
Clearly $(D + I) A (D + I) \subset
D A D + I = D + I$.  

(2) \ This follows from (3) and the fact 
from e.g.\ \cite[Appendix A.3, A.5]{BZ} that for 
closed subspaces $E,F$ of any Banach space $X$, 
$$(E \cap F)^{\perp \perp} = (E^{\perp} + F^{\perp})^{\perp}
= E^{\perp \perp} \cap F^{\perp \perp},$$ if $E + F$ is closed, or equivalently,
 if $E^{\perp \perp} + F^{\perp \perp}$ is closed. 
   \end{proof}

{\bf Remark.}   If $I$ is any ideal in a HSA $D$ of
an operator algebra $A$, and if $I \subset J(A)$, then $J(D/I) = J(D)/I$.  
This follows from Theorem \ref{sspas} and  
 \cite[Theorem 4.3.2 (b)]{Pal}.

 \section{Algebras that are HSA's in their bidual}

That is, $A$ has a cai, and $A A^{**} A \subset A$.  
We write $M_{a,b} : A \to A : x \mapsto axb$,
where $a, b \in A$.   Recall that a Banach algebra
is {\em compact} if the map $M_{a,a}$ is compact
for all $a \in A$.  We say that $A$ is {\em weakly compact} if 
$M_{a,a}$ is weakly compact for all $a \in A$. 
We are concerned here mostly with operator algebras $A$
that are HSA's in their bidual.
That is, $A$ has a cai, and $A A^{**} A \subset A$.  For algebras $A$ that do not 
have a cai, one could pass to the algebra $A_H$ described in \cite{BReadII}, 
the biggest subalgebra with a cai.   

\begin{lemma} \label{cohwc}  An operator algebra $A$ with cai is a HSA in its bidual
iff the map $M_{a,b} : A \to A : x \mapsto axb$ is weakly compact
for all $a, b \in A$, and iff
$A$ is weakly compact in the sense just defined.

Similarly, $M_{a,b}$ is compact on $A$ 
for all $a, b \in A$ iff $A$ is compact.

If in addition $A$ is commutative, then $A$ is compact (resp.\ weakly compact) iff 
multiplication by $a$ is   compact (resp.\ weakly compact) on $A$ for all $a \in A$.
\end{lemma}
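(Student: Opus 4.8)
The plan is to prove the three assertions by relating the weak compactness (resp.\ compactness) of the maps $M_{a,b}$ to the single condition $A A^{**} A \subset A$, using the standard identification of a weakly compact operator with one whose second adjoint maps into the canonical copy of the range space. First I would recall the Arens/Gantmacher machinery: for $a, b \in A$, the map $M_{a,b}^{**} : A^{**} \to A^{**}$ is just $\eta \mapsto a \eta b$ (left and right multiplication by fixed elements of $A$ are weak*-continuous on $A^{**}$ by separate weak*-continuity of the Arens product, and they agree with $M_{a,b}$ on $A$, so by weak*-density $M_{a,b}^{**}$ is this two-sided multiplication). By Gantmacher's theorem, $M_{a,b}$ is weakly compact iff $M_{a,b}^{**}(A^{**}) \subset A$, i.e.\ iff $a A^{**} b \subset A$.

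Next I would run the equivalences for the first assertion. If $A$ is a HSA in its bidual, then by definition $A A^{**} A \subset A$, so in particular $a A^{**} b \subset A A^{**} A \subset A$ for all $a, b \in A$, hence every $M_{a,b}$ is weakly compact; taking $b = a$ shows $A$ is weakly compact in the stated sense. Conversely, suppose $A$ is weakly compact, i.e.\ $M_{a,a}$ is weakly compact for every $a$. A polarization-type argument (or simply: $4 a \eta b = \sum_{k=0}^{3} i^k (a + i^k b)\eta(a + i^k b)$, valid since multiplication is bilinear) recovers weak compactness of $M_{a,b}$ for all $a,b$ from that of the $M_{c,c}$; so $a A^{**} b \subset A$ for all $a, b \in A$. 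Now let $(e_t)$ be a cai for $A$, with weak* limit $1_{A^{**}}$. Given $a, \eta, c$ with $a, c \in A$ and $\eta \in A^{**}$, we want $a \eta c \in A$: but $a \eta c = \lim_t a (e_t \eta e_t) c$ is not quite what we need directly, so instead argue that for $a, c \in A$ and $\eta \in A^{**}$, writing $a = \lim_s e_s a$, $c = \lim_s c e_s$ weak*, we have $a \eta c \in \overline{a A^{**} c}^{\,w*}$... the cleanest route is: $a A^{**} c \subset A$ already gives $A A^{**} A = \overline{\spn}\{ a \eta c : a, c \in A, \eta \in A^{**}\} \subset A$ since $A$ is norm-closed. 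Hence $A$ is a HSA in its bidual. The compact case is identical with Gantmacher replaced by the analogous fact that a bounded operator $T : X \to Y$ is compact iff $T^{**}$ is compact iff (here) $M_{a,b}^{**}$ is compact; the polarization step and the cai step go through verbatim, noting that a sum and norm-limit of compact operators is compact.

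Finally, for the commutative addendum: if $A$ is commutative with cai, then for $a \in A$ the operator of multiplication by $a$ has second adjoint $\eta \mapsto a\eta$ on $A^{**}$, which (using the cai, $a = \lim_t a e_t$) maps $A^{**}$ into $\overline{a A^{**}}$; weak (resp.\ norm) compactness of all these multiplications is equivalent, by Gantmacher (resp.\ the compact analogue) and the polarization/cai arguments above, to $a A^{**} \subset A$ for all $a$, which for commutative $A$ with cai is equivalent to $A A^{**} A \subset A$ — one inclusion is trivial, and the other follows because $a \eta b = a b \eta \in (ab) A^{**} \subset A$. So the single-variable condition already captures weak compactness (resp.\ compactness) of $A$.

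The main obstacle I expect is the careful handling of the cai in passing from the pointwise condition $a A^{**} b \subset A$ (all $a,b \in A$) to the global $A A^{**} A \subset A$, and dually from multiplication-by-$a$ being (weakly) compact to the HSA condition in the commutative case: one must be sure that norm-closedness of $A$ together with the span description of the products $A A^{**} A$ genuinely does the job, rather than needing some extra weak*-approximation. Everything else — Gantmacher's theorem, the compact-operator analogue, separate weak*-continuity of the Arens product, and polarization — is standard and quotable.
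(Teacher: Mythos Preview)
Your treatment of the first equivalence (HSA in the bidual $\Leftrightarrow$ every $M_{a,b}$ weakly compact) via Gantmacher and the identification $M_{a,b}^{**}(\eta)=a\eta b$ is correct and is exactly the paper's argument.

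The gap is in the passage from ``all $M_{a,a}$ are (weakly) compact'' to ``all $M_{a,b}$ are (weakly) compact''. Your polarization identity is false: expanding
\[
\sum_{k=0}^{3} i^{k}(a+i^{k}b)\,\eta\,(a+i^{k}b)
= \sum_{k=0}^{3}\bigl(i^{k}a\eta a + i^{2k}a\eta b + i^{2k}b\eta a + i^{3k}b\eta b\bigr)=0,
\]
since $\sum_k i^{k}=\sum_k i^{2k}=\sum_k i^{3k}=0$. More to the point, the map $(a,b)\mapsto M_{a,b}$ is \emph{bilinear}, not sesquilinear, so diagonal values $M_{c,c}$ only recover the symmetrization $M_{a,b}+M_{b,a}$ (e.g.\ via $M_{a+b,a+b}-M_{a,a}-M_{b,b}$), never $M_{a,b}$ itself. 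Thus polarization cannot do this step.

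The paper's route avoids polarization entirely: it uses that (weakly) compact operators form a norm-closed two-sided ideal in $B(A)$. With a cai $(e_t)$ one has
\[
M_{ae_t,\,e_t b}\;=\;L_a\,R_b\,M_{e_t,e_t},
\]
which is (weakly) compact because $M_{e_t,e_t}$ is by hypothesis and $L_a,R_b$ are bounded; then $\Vert M_{ae_t,\,e_t b}-M_{a,b}\Vert\to 0$ since $ae_t\to a$ and $e_t b\to b$ in norm, so $M_{a,b}$ is (weakly) compact. The same two-line argument handles the compact case and feeds directly into the commutative addendum. Your commutative paragraph is otherwise sound (in particular $A^{**}$ is commutative by Arens regularity, so $a\eta b=(ab)\eta$ is legitimate), but it currently inherits the polarization error; replacing that step by the ideal/cai argument above fixes everything.
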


\begin{proof}  The first `iff' follows by  basic functional analysis (namely, the
well known fact that
an operator
$T : X \to Y$ is  weakly compact iff $T^{**}(X^{**}) \subset Y$).
To see the second and third `iff' we use the 
fact that the compact (resp.\ weakly compact) operators constitute a norm closed ideal.
From this, first, if $(e_t)$ is a cai for $A$
and $M_{e_t,e_t}$ is  compact (resp.\ weakly compact), then so is
$M_{ae_t,e_tb}$  for all
$a, b \in A$.
 Second, $M_{a,b}$ is  compact (resp.\ weakly compact)
since  $M_{ae_t,e_tb} \to M_{a,b}$ in norm.  \end{proof}

Clearly then compact operator algebras are HSA's in their bidual.   It is easy to find Banach space reflexive 
examples showing that the converse is not true (see Example \ref{ell2}).
Note that the class of unital operator algebras
which are HSA's in their bidual, is the same as
the class of
 unital operator algebras which are Banach space reflexive. 
 It is of interest to find nonreflexive weakly compact algebras which are not compact,
and we shall do this  later in 
 Subsection \ref{weight}.    
 In this connection we remark that semisimple annihilator Banach algebras in the sense of
\cite[Chapter 8]{Pal} are compact, and are {\em ideals} in their bidual
\cite[Corollary 8.7.14]{Pal}.

\medskip

{\bf Remark.}  In any commutative operator algebra $A$, two  natural   ideals to consider 
are those constituting the elements $a \in A$ with multiplication by $a$ being compact or weakly compact on $A$.

\medskip

The property of being a HSA in the bidual passes to subalgebras and quotients:

\begin{lemma} \label{hsher}  Let  $A$ be an operator algebra
which is weakly compact.  If $B$ is a closed subalgebra
of $A$, then $B$ is  weakly compact.  If $I$ is a closed ideal in $A$,
then $A/I$ is  weakly compact.  \end{lemma}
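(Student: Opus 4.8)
The plan is to prove both assertions directly from the definition of weak compactness of the multiplier maps $M_{a,b}$, using Lemma \ref{cohwc} to translate back and forth between ``weakly compact'' and ``HSA in the bidual.''

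For the subalgebra claim, suppose $B \subseteq A$ is a closed subalgebra and fix $a, b \in B$. Consider the map $M^B_{a,b} : B \to B : x \mapsto axb$. Since $B$ is closed in $A$, it is the restriction of $M^A_{a,b} : A \to A$ (which is weakly compact by hypothesis) to the closed invariant subspace $B$, with range landing in $B$. A restriction of a weakly compact operator to a closed subspace, corestricted to a closed subspace containing the image, is again weakly compact (this is the standard fact that weakly compact operators form a two-sided operator ideal, or more elementarily: the image of $\Ball(B) \subseteq \Ball(A)$ is relatively weakly compact in $A$, hence in $B$ since $B$ is weak-closed being norm-closed and convex). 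So $M^B_{a,b}$ is weakly compact for all $a,b \in B$. The one subtlety is that $B$ need not itself have a cai, so strictly speaking ``$B$ is weakly compact'' should be understood as the condition that all the maps $M^B_{a,b}$ are weakly compact, which is exactly what we have shown; if one wants the ``HSA in bidual'' formulation one should assume $B$ approximately unital and invoke Lemma \ref{cohwc}.

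For the quotient claim, fix $a + I, b + I \in A/I$ and consider $M_{a+I,b+I} : A/I \to A/I$. Let $q : A \to A/I$ be the quotient map. Then $q \circ M^A_{a,b} = M_{a+I,b+I} \circ q$, i.e.\ the multiplier map on the quotient is intertwined with the one on $A$ via the (norm-one, surjective) map $q$. Since $M^A_{a,b}$ is weakly compact and $q$ is bounded, the composite $q \circ M^A_{a,b}$ is weakly compact; and since $q$ is a quotient map (in particular $\Ball(A/I)$ is the closure of $q(\Ball(A))$, or one uses that a bounded operator through which a weakly compact operator factors appropriately stays weakly compact), it follows that $M_{a+I,b+I}$ is weakly compact. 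Concretely: $q(\Ball(A))$ is dense in $\Ball(A/I)$, its image under $M_{a+I,b+I}$ equals $q(M^A_{a,b}(\Ball(A)))$ which is relatively weakly compact (continuous image of a relatively weakly compact set under $q$), and relative weak compactness is preserved under closure, so $M_{a+I,b+I}(\Ball(A/I))$ is relatively weakly compact. Since $A/I$ is approximately unital (being a quotient of an approximately unital algebra by a closed ideal), Lemma \ref{cohwc} gives that $A/I$ is a HSA in its bidual.

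I expect the only real ``obstacle'' to be bookkeeping about which algebras are assumed to have a cai and hence to which the ``HSA in the bidual'' language literally applies; the functional-analytic core — that weak compactness of an operator is inherited by restriction to closed invariant subspaces and by post-composition with quotient maps — is entirely routine and follows from the ideal property of weakly compact operators together with the description of weak compactness via relatively weakly compact image of the unit ball. No delicate estimates are needed.
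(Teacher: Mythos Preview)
Your proof is correct and is exactly the routine argument the authors have in mind; the paper itself simply leaves this as an exercise for the reader. The only minor point is that the lemma as stated concerns the ``weakly compact'' condition (weak compactness of each $M_{a,a}$), which does not presuppose a cai, so your closing sentence invoking Lemma~\ref{cohwc} to get the HSA-in-bidual formulation for $A/I$ is an extra remark requiring the additional hypothesis that $A$ be approximately unital --- but this does not affect the proof of the lemma itself.
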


\begin{proof}  We leave this as an exercise for the reader.
\end{proof}

{\bf Remark.}   Similarly, if $A$ is an approximately unital ideal in its bidual
then so is any closed subalgebra, or quotient by a closed ideal (see \cite{Sharma}).

 \begin{corollary} \label{sssp} Suppose that $A$ is an operator algebra which is
a HSA in its bidual.  Then $A$ is semisimple (resp.\ semiprime) 
iff $A^{**}$ is semisimple (resp.\ semiprime).
\end{corollary}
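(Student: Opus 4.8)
The plan is to prove both biconditionals (for ``semisimple'' and for ``semiprime'') in parallel, treating the two implications separately. Throughout I use that $A$ has a cai and is a HSA in the operator algebra $A^{**}$, so Theorem~\ref{sspas} applies with ``$D$'' $=A$ and ``$A$'' $=A^{**}$.

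\emph{The easy direction: if $A^{**}$ is semisimple (resp.\ semiprime) then so is $A$.} For semisimplicity, Theorem~\ref{sspas} gives $J(A) = A \cap J(A^{**})$, which is $(0)$ as soon as $J(A^{**}) = (0)$. For semiprimeness this is Proposition~\ref{isnotsp}(1) (or directly: if $J$ is an ideal of $A$ with $J^2 = (0)$, then $J^{\perp\perp}$ is an ideal of $A^{**}$ with $(J^{\perp\perp})^2 = (0)$ by separate weak$^*$-continuity of the Arens product, so $J^{\perp\perp} = (0)$ and hence $J = (0)$). Note this direction does not really need the HSA-in-bidual hypothesis.

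\emph{The main direction: if $A$ is semisimple (resp.\ semiprime) then so is $A^{**}$.} This is where weak compactness is essential, since Proposition~\ref{isnotsp}(2) shows the implication fails for general operator algebras. Set $N := J(A^{**})$ in the semisimple case, and in the semiprime case let $N$ be any closed ideal of $A^{**}$ with $N^2 = (0)$; in either case $N$ is a closed two-sided ideal of $A^{**}$ and the goal is $N = (0)$. The crucial point is the sandwich
\[
A N A \subseteq A A^{**} A \subseteq A, \qquad A N A \subseteq N,
\]
the first inclusion since $A$ is a HSA in $A^{**}$ and the second since $N$ is an ideal; hence $A N A \subseteq A \cap N$. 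But $A \cap N = (0)$: in the semisimple case $A \cap J(A^{**}) = J(A) = (0)$ by Theorem~\ref{sspas}, and in the semiprime case $A \cap N$ is an ideal of $A$ with $(A \cap N)^2 \subseteq N^2 = (0)$, so it vanishes by semiprimeness of $A$. Thus $A N A = (0)$. Now conclude by a weak$^*$-limit argument: let $(e_t)$ be a cai for $A$, so $e_t \to 1_{A^{**}}$ weak$^*$; for $\zeta \in N$ we have $e_s \zeta e_t \in A N A = (0)$ for all $s,t$, and since the Arens product on the operator algebra $A^{**}$ is separately weak$^*$-continuous, letting $t \to \infty$ yields $e_s \zeta = 0$ and then letting $s \to \infty$ yields $\zeta = 0$. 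Hence $N = (0)$.

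\emph{Expected obstacle.} There is no deep obstacle once one spots the sandwich $A N A \subseteq A \cap N$ --- that is precisely the mechanism that makes the HSA-in-bidual hypothesis do its work, and it is what distinguishes this situation from the counterexamples of Proposition~\ref{isnotsp}(2). The one technical ingredient to handle carefully is the final step's appeal to separate weak$^*$-continuity of the Arens product, without which weak$^*$-density of $A$ in $A^{**}$ would not let us pass from $A N A = (0)$ to $N = (0)$; this is available here because operator algebras are Arens regular and $A^{**}$ embeds as a weak$^*$-closed subalgebra of a von Neumann algebra.
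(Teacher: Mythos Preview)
Your proof is correct and follows essentially the same route as the paper: the sandwich $A N A \subseteq A \cap N$ combined with Theorem~\ref{sspas} (or the direct semiprime argument) to get $A \cap N = (0)$, and then the cai weak$^*$-limit to kill $N$. The only cosmetic difference is that the paper leaves the final cai step implicit in the semisimple case and spells it out only for semiprime, whereas you treat both cases uniformly.
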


\begin{proof}  The one direction follows from 
Theorem \ref{sspas} (resp.\ \cite[Proposition 2.5]{ABS}).
 If  $A$  is semisimple, let
 $0 \neq \eta \in J(A^{**})$.  Then $A \eta A \subset J(A^{**})
\cap A \subset J(A) = (0)$ (using
Theorem \ref{sspas}).  So $\eta = 0,$ and $A^{**}$ is semisimple.
If $A$ is semiprime, and if $J$ is an ideal in $A^{**}$ with
$J^2 = (0)$, then $(J \cap A)^2 = (0)$, so that $J \cap A = (0)$.
Hence $A J A  = (0)$ since $A J A \subset J \cap A$.  Since a cai of $A$
converges weak* to the identity of $A^{**}$ we deduce that $J = (0)$.
So $A^{**}$ is semiprime.
 \end{proof}

\begin{proposition} \label{idbid} If   $A$ is an operator algebra which is
a HSA in its bidual, and if $A$ has no ideals (resp.\ no closed 
ideals, no closed
ideals with a cai), then every ideal (resp.\ closed
ideal, closed ideal with a cai) in $A^{**}$ contains $A$.
\end{proposition}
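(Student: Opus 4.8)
The plan is to argue contrapositively: suppose $J$ is an ideal (resp.\ closed ideal, closed ideal with a cai) in $A^{**}$ which does \emph{not} contain $A$, and produce a proper ideal of the same type in $A$, contradicting the hypothesis. The natural candidate to examine is $J \cap A$, which is always an ideal in $A$ since $J$ is an ideal in $A^{**}$ and $A \subset A^{**}$ is a subalgebra. First I would show that $J \cap A$ cannot be all of $A$: if $A \subset J$ then in particular $J \supset A$, contrary to assumption. So $J \cap A$ is a proper ideal in $A$, which for the first statement already finishes the argument (no ideals in $A$ at all forces $J \cap A = (0)$, but then one still must rule out $J \neq (0)$; see below).

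The subtlety, and what I expect to be the main obstacle, is the passage between the three parenthetical levels --- plain ideal, \emph{closed} ideal, closed ideal \emph{with a cai} --- because $J \cap A$ need not automatically inherit the extra structure $J$ has. Closedness of $J \cap A$ when $J$ is closed is immediate (intersection of a closed set with $A$). The genuinely delicate case is the cai: if $J$ has a cai in $A^{**}$, I need $J \cap A$ to have a cai in $A$. Here I would use that $A$ is a HSA in its bidual: the support projection $p$ of $J$ (the weak* limit of a cai of $J$) is an open projection in $A^{**}$, and I claim $J \cap A = p A^{**} p \cap A$ is precisely the HSA in $A$ matching $p$ --- provided $p \in A^{\perp\perp}$, which holds because $J \subset A^{\perp\perp}$... but wait, $J$ is an ideal in $A^{**}$, not in $A^{\perp\perp}$. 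The key point to exploit is $A A^{**} A \subset A$: this forces $A J A \subset J \cap A$, so $J \cap A$ is a \emph{nondegenerate} $A$-bimodule in a strong sense, and in fact $\overline{A J A} = J \cap A$ once we know $J \cap A \neq (0)$; combined with $A$ having a cai one extracts a cai for $J \cap A$ from $e_t j e_t$ for $j$ ranging over $J \cap A$. This is the step I would write most carefully.

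With $J \cap A$ shown to be a proper ideal of $A$ of the requisite type, the hypothesis (``$A$ has no such ideal'') yields $J \cap A = (0)$. It remains to derive a contradiction from $J \neq (0)$ together with $J \cap A = (0)$. For this I would again invoke $A A^{**} A \subset A$: take $0 \neq \eta \in J$; then $A \eta A \subset A J A \subset J \cap A = (0)$, so $A \eta A = (0)$. Since a cai $(e_t)$ of $A$ converges weak* to $1_{A^{**}}$ and the Arens product is separately weak* continuous, $A \eta A = (0)$ gives $1_{A^{**}} \eta \, 1_{A^{**}} = \eta = 0$, a contradiction. (For the ``closed ideal'' and ``closed ideal with a cai'' versions the argument is identical since a nonzero such ideal is in particular a nonzero ideal.) Hence no ideal $J$ of the given type can fail to contain $A$, which is the assertion. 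I would remark that the only real input beyond soft functional analysis is the HSA-in-bidual property entering at two points: manufacturing the cai on $J \cap A$, and the final $A\eta A \subset A$ collapse.
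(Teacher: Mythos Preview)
Your overall architecture is exactly the paper's: form $J\cap A$, observe it is an ideal of $A$ of the appropriate type, invoke the hypothesis to force $J\cap A\in\{(0),A\}$, and rule out $J\cap A=(0)$ for nonzero $J$ via $AJA\subset J\cap A$ together with separate weak*-continuity of the Arens product (the paper phrases this last step as ``as in the proof of Corollary~\ref{sssp}''). The arbitrary-ideal and closed-ideal cases go through as you say.

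The gap is in your handling of the cai case. Your proposed mechanism---``extract a cai for $J\cap A$ from $e_tje_t$ for $j$ ranging over $J\cap A$''---uses only the cai of $A$ and not the cai of $J$, and this cannot succeed: for \emph{any} closed ideal $I$ in an approximately unital operator algebra $B$ one has $\overline{BIB}=I$ (just because a cai of $B$ pushes elements of $I$ back to themselves), yet $I$ need not have a cai; the paper's own algebra $A_S$ in Section~3, viewed as an ideal in its unitization, is precisely such an example. The fix is to bring in a cai $(f_s)$ for $J$ as well: then $e_t f_s e_t\in AJA\subset J\cap A$, and an iterated-limit argument shows these furnish a cai for $J\cap A=\overline{AJA}$. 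This is exactly Corollary~\ref{innp}(1), applied in the ambient algebra $A^{**}$ with $D=A$ the HSA and $I=J$ the approximately unital ideal; the paper simply cites that corollary to conclude $J\cap A$ is a HSA in $A^{**}$ and hence has a cai. (Your aborted support-projection approach was not going to work either: the support projection of $J$ is the weak* limit of a cai lying in $J\subset A^{**}$, so there is no reason for it to be open \emph{relative to $A$}, which is what you would need.)

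One small framing point: the zero ideal never contains $A$, so the statement is implicitly about nonzero $J$; the paper opens with ``If $J$ is a nontrivial ideal\ldots'', and your argument needs the same assumption up front rather than only at the end.
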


\begin{proof}  If $J$ is a nontrivial
 ideal in $A^{**}$, then as in the proof of
Corollary \ref{sssp},  $A J A \subset J \cap A = A$ or $(0)$, and the 
latter is impossible.   Similarly for the closed
ideal case.  Similarly for the case of a closed
ideal $J$ with a cai, because 
by Corollary \ref{innp} the ideal $J \cap A$ of $A$ is also
a HSA in $A^{**}$, so has a cai.    
 \end{proof}

An operator algebra
$A$ with cai is {\em nc-discrete} if every right ideal which
has a left cai, is of the form $eA$ for a projection $e \in M(A)$.   Equivalently,  all the open projections are 
also closed (or equivalently are in $M(A)$).  The first part of the following was independently noticed recently 
in \cite{OS},
and no doubt by others:

\begin{proposition} \label{cafa}  A $C^*$-algebra
which is a HSA in its bidual, or is nc-discrete,
is an annihilator $C^*$-algebra.
\end{proposition}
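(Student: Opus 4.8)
The plan is to prove that a $C^*$-algebra $B$ which is either a HSA in its bidual or nc-discrete must be an annihilator $C^*$-algebra, i.e.\ a $c_0$-direct sum of algebras of compact operators. First I would recall the structure theory: every $C^*$-algebra $B$ sits inside its enveloping von Neumann algebra $B^{**}$, and minimal projections, atoms, and the central structure of $B^{**}$ control whether $B$ is an annihilator algebra. The key classical fact I would invoke (see \cite[Chapter 8]{Pal}) is that a $C^*$-algebra is an annihilator $C^*$-algebra iff it is a dual $C^*$-algebra iff it coincides with the closed span of its minimal left (equivalently right) ideals, iff every nonzero hereditary subalgebra contains a minimal projection of $B^{**}$ that lies in $B$, or in spatial terms, iff in every representation the algebra is generated by rank-one-type elements. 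Equivalently, $B$ is an annihilator $C^*$-algebra iff $B$ is a HSA in $B^{**}$ together with the requirement that the identity of $B^{**}$ is a (weak*) sum of minimal projections; but in fact for $C^*$-algebras these coincide, and that is what we must show.

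Here is the line of argument I would carry out. Suppose first that $B$ is a HSA in $B^{**}$, so $B B^{**} B \subset B$. Pick any nonzero positive $a \in B$; then the hereditary subalgebra $\overline{aBa}$ of $B$ has bidual $pB^{**}p$ where $p$ is its support projection, and $pB^{**}p \cap B = \overline{aBa}$ is all of $pB^{**}p$, so $pB^{**}p \subset B$. In particular $p B^{**} p$ is a von Neumann algebra contained in $B$; but a $C^*$-algebra that is also a von Neumann algebra and sits inside a $C^*$-algebra in this hereditary way forces $p$ to be "small." More precisely I would argue that $p$ must be a minimal (atomic) projection of $B^{**}$: if $p$ dominated two orthogonal nonzero projections $q_1, q_2 \in B^{**}$, one could, using that open projections corresponding to HSA's of $B$ are dense below $p$, produce an infinite orthogonal sequence of nonzero open subprojections $p_n \leq p$ with $p_n \in B^{\perp\perp}$ and corresponding positive norm-one elements $b_n \in B$ with $b_n = p_n b_n p_n$; then $\sum_n b_n/n$ converges in $B$ to an element whose support projection is $\sup p_n \leq p$, and multiplying on both sides by suitable elements of $B$ one can manufacture an element of $B B^{**} B$ that is not in $B$ (it has a spectrum with $0$ as a non-isolated limit point combined with a non-compact "diagonal" in $pB^{**}p$), contradicting $B B^{**} B \subset B$. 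So every nonzero HSA of $B$ contains a minimal projection of $B^{**}$ lying in $B$, which is exactly the characterization of an annihilator (dual) $C^*$-algebra in \cite[Chapter 8]{Pal}.

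For the nc-discrete case the argument is cleaner: nc-discreteness says every open projection of $B^{**}$ is closed, equivalently lies in $M(B)$. Given nonzero positive $a \in B$, the support projection $p$ of $\overline{aBa}$ is open, hence closed, hence in $M(B) = B$ (since $B$ is a $C^*$-algebra with the same multiplier algebra computed inside $B^{**}$, and $p \in B^{\perp\perp} \cap M(B) \subset B$ because $p \cdot B \subset B$ forces $p \in B$ for a projection). Thus every HSA of $B$ has its support projection inside $B$; iterating, one sees $B$ has "enough" projections, and in particular no $C^*$-algebra containing a non-atomic part can be nc-discrete, since a non-atomic hereditary von Neumann subalgebra of $B^{**}$ would yield an open non-closed projection (e.g.\ via a strictly increasing net of spectral projections of some positive element). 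Hence again $B$ is a direct sum of elementary $C^*$-algebras. I would then note (or cite) that a HSA in its bidual is automatically nc-discrete (this is asserted in the introduction), so the two hypotheses overlap and the nc-discrete argument actually subsumes the other; but it is worth recording both.

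The main obstacle I expect is the step showing that the support projection $p$ of an arbitrary HSA of $B$ must be \emph{atomic} (a finite or countable sum of minimal projections of $B^{**}$), as opposed to merely lying in $B$ — that is, ruling out a "diffuse" hereditary von Neumann subalgebra $pB^{**}p \subset B$. The delicate point is that $pB^{**}p \subset B$ already says $pB^{**}p$ is a von Neumann algebra sitting hereditarily inside a $C^*$-algebra, and one must recall the classical fact that this is only possible when $p$ is atomic (otherwise $pB^{**}p$ contains a copy of $L^\infty[0,1]$ or an infinite-dimensional factor, whose self-adjoint positive elements include one whose spectrum is an interval, giving a non-closed open projection and contradicting nc-discreteness, or an element outside $B B^{**} B$). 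Organizing this cleanly — probably by directly invoking the characterization of annihilator $C^*$-algebras in terms of every hereditary subalgebra containing a minimal projection, from \cite[Chapter 8]{Pal} — is where the real work lies; the rest is bookkeeping with open and closed projections.
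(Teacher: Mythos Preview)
Your proposal contains genuine errors in both branches.

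In the HSA branch, the claim ``$pB^{**}p \cap B = \overline{aBa}$ is all of $pB^{**}p$, so $pB^{**}p \subset B$'' is false. Take $B = K(H)$, which is an ideal (hence a HSA) in $B^{**} = B(H)$, and let $a$ be a positive compact operator with trivial kernel and dense range. Then the support projection is $p = I_H$, so $pB^{**}p = B(H) \not\subset K(H)$. The hypothesis $BB^{**}B \subset B$ only gives $aB^{**}a \subset B$ for each fixed $a$, which is far weaker than $pB^{**}p \subset B$. Everything downstream of this (the ``$pB^{**}p$ is a von Neumann algebra sitting in $B$'' picture, the atomic argument) collapses.

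In the nc-discrete branch, you write ``hence in $M(B) = B$'' and then ``$p \in B^{\perp\perp} \cap M(B) \subset B$.'' Both are wrong: $B^{\perp\perp} = B^{**}$, so $B^{\perp\perp} \cap M(B) = M(B)$, which equals $B$ only when $B$ is unital. An open-and-closed projection lands in $M(B)$, not in $B$. You are essentially attempting to reprove from scratch the classical equivalence ``every open projection is closed $\Leftrightarrow$ $B$ is dual/annihilator,'' and the attempt does not go through as written.

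The paper's proof avoids all of this by reducing to the commutative case. For the HSA hypothesis: weak compactness passes to closed subalgebras (Lemma~\ref{hsher}), so every commutative $C^*$-subalgebra $C_0(K)$ of $B$ is weakly compact, hence an ideal in its bidual, which forces $K$ to be discrete; and ``every commutative $C^*$-subalgebra has discrete spectrum'' is a known characterization of annihilator $C^*$-algebras. For the nc-discrete hypothesis, the paper simply cites that ``all open projections are closed'' is itself a well-known characterization of annihilator $C^*$-algebras, with no further argument needed. Your instinct to invoke \cite[Chapter 8]{Pal} is right, but the efficient move is to quote the relevant characterization directly rather than rebuild the atomic structure by hand.
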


\begin{proof}    One well known
 characterization of annihilator $C^*$-algebras
is that every commutative
 $C^*$-subalgebra $D$ has maximal ideal space which is
topologically discrete.  Thus the HSA case of the
proposition  follows by Lemma \ref{hsher}
and the fact that if a $C_0(K)$ space is an ideal in
its bidual, then $K$ is topologically discrete.
The nc-discrete case for $C^*$-algebras  is another well known
characterization of annihilator $C^*$-algebras.
    \end{proof}

\begin{proposition} \label{pehs}  If an operator algebra
$A$ is a HSA in its bidual (resp.\ is
compact), then so is $\Kdb_I(A)$ for any cardinal $I$.
Also, the $c_0$-direct sum of operator algebras which are
HSA's in their bidual
(resp.\ nc-discrete, $\Delta$-dual), is a HSA in its bidual (resp.\ is
nc-discrete, $\Delta$-dual).
\end{proposition}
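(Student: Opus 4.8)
The plan is to handle the two assertions essentially separately, reducing everything to the characterizations already available: weak compactness of $M_{a,b}$ (Lemma \ref{cohwc}), and the description of nc-discreteness in terms of open projections lying in $M(A)$.

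First I would treat $\Kdb_I(A)$. Recall that $\Kdb_I(A) = \Kdb_I \potimes A$ (spatially, the closure of $I\times I$ matrices over $A$ with finitely many nonzero entries), and that its bidual may be identified with $\Bdb_I \bar\otimes A^{**}$ in a way compatible with the Arens product, where $\Bdb_I = B(\ell^2_I)$. For the compact case, it suffices by Lemma \ref{cohwc} to show multiplication $x \mapsto axb$ is compact for $a,b$ in a dense subset; taking $a,b$ to be finitely supported matrices over $A$, the map $M_{a,b}$ on $\Kdb_I(A)$ is (up to a finite-rank ``sandwich'') a finite matrix of maps of the form $y \mapsto a_{ij} y b_{kl}$ on single copies of $A$, each compact by hypothesis and Lemma \ref{cohwc}; a finite sum/matrix of compact operators is compact, and then one passes to the norm limit. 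For the HSA-in-bidual (weakly compact) case the argument is identical with ``compact'' replaced by ``weakly compact'' throughout, using that the weakly compact operators also form a norm-closed ideal; alternatively one checks directly that $\Kdb_I(A) \Kdb_I(A)^{**} \Kdb_I(A) = (\Kdb_I \potimes A)(\Bdb_I \bar\otimes A^{**})(\Kdb_I \potimes A) \subset \Kdb_I \potimes (A A^{**} A) \subset \Kdb_I(A)$, using $\Kdb_I \Bdb_I \Kdb_I \subset \Kdb_I$.

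Next, the $c_0$-direct sum. Write $A = c_0\text{-}\bigoplus_\alpha A_\alpha$. Its bidual is $A^{**} = \ell^\infty\text{-}\bigoplus_\alpha A_\alpha^{**}$ with coordinatewise Arens product, and $M(A) = \ell^\infty\text{-}\bigoplus_\alpha M(A_\alpha)$. For the HSA-in-bidual property: $A$ has a cai (the sum of the cai's of the $A_\alpha$, suitably indexed), and for $\eta = (\eta_\alpha) \in A^{**}$ and $a = (a_\alpha), b = (b_\alpha) \in A$, the product $a\eta b = (a_\alpha \eta_\alpha b_\alpha)$ lies in $c_0\text{-}\bigoplus A_\alpha$ because each coordinate lies in $A_\alpha$ (as $A_\alpha$ is a HSA in $A_\alpha^{**}$) and $\|a_\alpha \eta_\alpha b_\alpha\| \le \|a_\alpha\|\,\|\eta\|\,\|b_\alpha\| \to 0$. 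For nc-discreteness: an open projection $p = (p_\alpha) \in A^{**}$ has each $p_\alpha$ open in $A_\alpha^{**}$ (restrict an approximating net), hence $p_\alpha \in M(A_\alpha)$ by hypothesis, so $p \in \ell^\infty\text{-}\bigoplus M(A_\alpha) = M(A)$; one should double-check that an $\ell^\infty$-sum of elements of the respective multiplier algebras really does multiply $A$ into $A$, which is immediate from the $c_0$-condition. The $\Delta$-dual case (a notion presumably defined just before this proposition) should run along the same lines, reducing a $\Delta$-duality statement on the sum to the coordinate statements; I would phrase it as: the relevant diagonal/multiplier structure of a $c_0$-sum is the $c_0$- (resp.\ $\ell^\infty$-) sum of the pieces.

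The step I expect to be the main obstacle is not any single computation but rather pinning down the functional-analytic identifications cleanly: that $(\Kdb_I \potimes A)^{**}$ is $\Bdb_I \bar\otimes A^{**}$ as a \emph{dual operator algebra} with the Arens product matching the normal product, and likewise that the bidual and multiplier algebra of a $c_0$-sum decompose as the corresponding $\ell^\infty$-sums. Once those identifications are in hand (they are standard, cf.\ \cite{BLM}), each verification is the routine ``check on a dense set / check coordinatewise, with a vanishing-at-infinity estimate'' argument, so in the paper I would likely compress this to a sentence or two and leave the details to the reader, exactly as was done for Lemma \ref{hsher}.
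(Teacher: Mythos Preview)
The paper itself leaves this proof as an exercise for the reader, so there is no authors' argument to compare against; your sketch is a correct and standard way to fill in that exercise, reducing the $\Kdb_I(A)$ claim to Lemma \ref{cohwc} on finitely supported matrices and handling the $c_0$-sum coordinatewise via the identifications $A^{**}\cong \ell^\infty\text{-}\bigoplus A_\alpha^{**}$ and $M(A)\cong \ell^\infty\text{-}\bigoplus M(A_\alpha)$. Your caution about the $\Delta$-dual case is warranted, since that notion is not defined in the present paper (it is from \cite{ABS}); once one has the definition the coordinatewise check is routine in the same spirit.
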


\begin{proof}  We leave this as an exercise.
\end{proof}

{\bf Remark.}   We said earlier that compact
approximately unital Banach algebras are HSA's in their bidual.
We recall that a semisimple Banach algebra $A$ is
a modular annihilator algebra iff no element of $A$ has a nonzero
limit point in its spectrum \cite[Theorem 8.6.4]{Pal}), 
and iff  for every $a \in A$ multiplication on $A$ by $a$
is a Riesz operator (see \cite[Chapter 8]{Pal}).  If $A$ is also commutative then this is equivalent to
the Gelfand spectrum of $A$ being discrete \cite[p.\ 400]{LN}.  By  
\cite[Chapter 8]{Pal},  compact semisimple algebras are modular 
annihilator algebras.  
We note that any radical semiprime algebra is a modular annihilator algebra
by \cite[Theorem 8.7.2]{Pal}.   There are some interesting commutative radical  algebras in \cite{Dixr}
which are  modulator annihilator algebras, but 
they are probably not approximately unital nor are 
ideals in their bidual. One may ask if for
 algebras that are    HSA's in their  
 bidual (or even ideals in their bidual),
is the spectrum of every element finite or countable?   
We have examples of algebras which are HSA's in their
 bidual with elements having spectrum which
does have nonzero limit points (see Example \ref{ell2}).
Such algebras are not
 modular annihilator algebras, but are 
 {\em Duncan modular annihilator algebras} in the sense of  \cite[Chapter 8]{Pal}  (see also \cite{Dunc}).
Any semisimple operator algebra with the spectrum of any element finite or countable,
is a  Duncan modular annihilator algebra \cite{Pal}.   If $A$ is a commutative approximately unital 
 operator algebra
which is an  ideal in its bidual, one may ask if the spectrum of $A$ (eg.\
the set of characters of $A$) scattered?   In this case, and if $A$ is
not reflexive in the Banach space sense, then the spectrum of $A^{**}$ equals the 
one point compactification of the spectrum of $A$ (see Theorem \ref{asp} (4)).

In the converse direction, Duncan modular annihilator algebras, or
semisimple operator algebras with the spectrum of every
 element finite or countable,
need not be nc-discrete.  An example is the space $c$.
We are not sure if every (approximately unital)
semisimple modular annihilator operator
algebra is nc-discrete, or  is a HSA in its  bidual,
although this seems unlikely, even in the commutative case.   
.

\begin{theorem} \label{bvol} If an operator algebra $A$ is a HSA in $A^{**}$,
and if $\Delta(A)$ acts nondegenerately on $A$, then
$\Delta(A)^{**} = \Delta(A^{**}) = \Delta(M(A))$.   In particular, every
projection in $A^{**}$ is both open and closed.
\end{theorem}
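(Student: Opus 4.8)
The plan is to fix a $C^*$-algebra $B$ containing $A$, to work inside the von Neumann algebra $B^{**}$ (so $A^{**}=A^{\perp\perp}\subseteq B^{**}$, with the Arens product and the weak* topology inherited from $B^{**}$), and to abbreviate $\Delta=\Delta(A)=A\cap A^*$ and $1=1_{A^{**}}$. First I would set up the scaffolding: $\Delta$ is a $C^*$-subalgebra of $B$, so $\Delta^{**}=\Delta^{\perp\perp}$ is a weak*-closed self-adjoint subalgebra of $A^{**}$, i.e.\ a von Neumann algebra; and since $\Delta$ acts nondegenerately on $A$, a cai $(e_t)$ of the $C^*$-algebra $\Delta$ is a two-sided cai for $A$, so $e_t\to 1$ weak* and hence $1\in\Delta^{**}$, making $\Delta^{**}$ unital with unit $1$. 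Two inclusions are then immediate and will close the loop at the end: $\Delta^{**}=\Delta^{\perp\perp}\subseteq A^{\perp\perp}\cap(A^*)^{\perp\perp}=\Delta(A^{**})$, and $\Delta(M(A))=M(A)\cap M(A)^*\subseteq A^{**}\cap(A^{**})^*=\Delta(A^{**})$. So the theorem reduces to proving $\Delta(A^{**})\subseteq\Delta^{**}$ and $\Delta^{**}\subseteq M(A)$; the latter then lands inside $\Delta(M(A))$ for free since $\Delta^{**}$ is self-adjoint, and $\Delta^{**}\subseteq\Delta(M(A))\subseteq\Delta(A^{**})\subseteq\Delta^{**}$ collapses all three algebras to $\Delta^{**}=\Delta(A)^{**}$.

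The structural point that makes everything run — and the step I expect to be the main obstacle — is that $\Delta$ is an \emph{ideal} in its own bidual. This is exactly where the hypothesis that $A$ is a HSA in $A^{**}$ is used: by Lemma \ref{cohwc} $A$ is weakly compact, hence so is the closed subalgebra $\Delta$ by Lemma \ref{hsher}, hence (as $\Delta$ has a cai) $\Delta$ is a HSA in $\Delta^{**}$ by Lemma \ref{cohwc} again, and then Proposition \ref{cafa} shows $\Delta$ is an annihilator $C^*$-algebra, i.e.\ a $c_0$-sum of algebras of compact operators, which is an ideal in its bidual \cite[Corollary 8.7.14]{Pal}. Granting $\Delta^{**}\,\Delta\subseteq\Delta$ and $\Delta\,\Delta^{**}\subseteq\Delta$, the inclusion $\Delta^{**}\subseteq M(A)$ is quick: for $d\in\Delta^{**}$, $\delta\in\Delta$, $a\in A$ one has $d(\delta a)=(d\delta)a\in\Delta A=A$ (noting $d\delta\in\Delta^{**}\Delta\subseteq\Delta$), so by boundedness of left multiplication by $d$ together with nondegeneracy we get $dA\subseteq A$, and symmetrically $Ad\subseteq A$; thus $d\in M(A)$.

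For $\Delta(A^{**})\subseteq\Delta^{**}$ I would argue as follows. Let $\eta\in\Delta(A^{**})$, so $\eta$ and its $B^{**}$-adjoint $\eta^*$ both lie in $A^{**}$. For each fixed $t$, the HSA property gives $e_s\eta e_t\in AA^{**}A\subseteq A$ and likewise $(e_s\eta e_t)^*=e_t\eta^*e_s\in AA^{**}A\subseteq A$, so $e_s\eta e_t\in A\cap A^*=\Delta$. Since right multiplication by a fixed element of $A^{**}$ is weak*-continuous (a standard property of the Arens product), letting $s\to\infty$ gives $\eta e_t=\lim_s e_s(\eta e_t)\in\Delta^{\perp\perp}=\Delta^{**}$. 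Then $(\eta e_t)^*=e_t\eta^*\to\eta^*$ weak* (right multiplication by $\eta^*$ again), so by weak*-continuity of the involution on $B^{**}$ we get $\eta e_t\to\eta$ weak* as $t$ runs; since $\Delta^{**}$ is weak*-closed, $\eta\in\Delta^{**}$. A secondary subtlety here is keeping straight which of the two one-sided Arens multiplications on $A^{**}$ is weak*-continuous, and routing the other side through the involution of $B^{**}$.

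Finally, for the ``in particular'' statement: a projection $e\in A^{**}$ lies in $\Delta(A^{**})=\Delta^{**}$, so I would pick a net $(d_\lambda)$ in $\Delta$ with $d_\lambda\to e$ weak*, and — using that multiplication in the von Neumann algebra $\Delta^{**}$ is separately weak*-continuous — form $y_\lambda:=e d_\lambda e$, which converges weak* to $e$, satisfies $e y_\lambda e=y_\lambda$, and lies in $\Delta\subseteq A$ because $\Delta$ is an ideal in $\Delta^{**}$. Hence $e$ is an open projection. Applying the same argument to the projection $1-e\in A^{**}$ shows $1-e$ is open, i.e.\ $e$ is also closed, which completes the proof.
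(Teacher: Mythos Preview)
Your proof is correct and follows essentially the same strategy as the paper's: show $\Delta(A^{**})\subseteq\Delta(A)^{\perp\perp}$ by sandwiching with a positive cai from $\Delta(A)$ and using the HSA property, and deduce the ``in particular'' from the fact that $\Delta(A)$ is an annihilator $C^*$-algebra. There are two minor differences worth noting. First, for the convergence $e_t\eta e_t\to\eta$ (or your two-index variant), the paper passes to a concrete nondegenerate representation on a Hilbert space and uses WOT convergence, whereas you stay inside $B^{**}$ and route one limit through the weak*-continuous involution; both are fine, and yours avoids the extra representation at the cost of slightly more bookkeeping about which Arens multiplication is continuous. Second, for the identification with $\Delta(M(A))$ the paper simply cites \cite[Proposition 2.11]{ABS}, while you give a self-contained argument by first establishing that $\Delta(A)$ is an ideal in $\Delta(A)^{**}$ and then showing $\Delta(A)^{**}\subseteq M(A)$ directly---this makes your proof more self-contained, and it front-loads the annihilator-$C^*$-algebra step that the paper only invokes at the end.
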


\begin{proof}
That  $\Delta(A)$ acts nondegenerately on $A$ implies that $A$ has a positive
cai $(e_t)$ say.  If $A$ is a HSA in $A^{**}$ then
$e_t \eta e_t \in \Delta(A)$ for all $\eta \in \Delta(A^{**})_+$.
If we represent $A^{**}$ as a weak* closed subalgebra of $H$ containing
$I_H$, then $A$ is represented nondegenerately on $H$ (via \cite[Lemma 2.1.9]{BLM}
say), and so  $e_t \zeta \to \zeta$ for all $\zeta \in H$.  It follows
that $e_t \eta e_t \to \eta$ WOT, hence weak*.  Thus $\Delta(A^{**})
\subset \Delta(A)^{\perp \perp}$.  Since the converse inclusion is obvious
we have $\Delta(A)^{**} = \Delta(A^{**})$.  This equals
$\Delta(M(A))$ by \cite[Proposition 2.11]{ABS}.  For the last part,
note that $\Delta(A)$ is a HSA in its bidual by Lemma \ref{ncher},
and hence is an annihilator $C^*$-algebra
by Proposition \ref{cafa}.  Thus
any projection in $\Delta(A^{**}) = \Delta(A)^{**}$ is
open  and closed with respect to $\Delta(A)^{**}$ and hence also open with 
respect to $A$.  
\end{proof}

\begin{theorem} \label{hsnc}
If an operator algebra $A$ is a HSA in $A^{**}$ then $A$ is nc-discrete.
      \end{theorem}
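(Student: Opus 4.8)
The plan is to show that every open projection $p \in A^{**}$ lies in $M(A)$; by the definition of nc-discrete this suffices (an r-ideal $eA$ with $e$ the support projection corresponds to $e$ open, and $e \in M(A)$ forces $eA = eA^{**} \cap A$ closed of the stated form, with $e$ a projection in $M(A)$). So fix an open projection $p$ with associated HSA $D = pA^{**}p \cap A$ and r-ideal $pA^{**} \cap A$. The key point I would isolate first is that $D$ is itself a HSA in its own bidual: since $D^{**} = pA^{**}p$ (using $p \in A^{\perp\perp}$ and weak* density), we have $D D^{**} D = D (pA^{**}p) D \subset D (A^{**}) D \subset A A^{**} A \subset A$, and this product also lies in $pA^{**}p$, hence in $D$. (This is presumably recorded as Lemma \ref{ncher} referenced in the proof of Theorem \ref{bvol}.) Likewise I would want the complementary fact that the r-ideal $R = pA^{**}\cap A$ satisfies $R R^{**} R \subset A$, so it too inherits the weak-compactness/HSA-in-bidual property.

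Next I would pass to the $C^*$-algebraic picture. Work inside $B = C^*_e(A)$ or any $C^*$-algebra containing $A$, with $p \in B^{**}$ an Akemann-open projection lying in $A^{\perp\perp}$. To show $p \in M(A)$ it is enough to show $pA \subset A$ and $Ap \subset A$, equivalently (taking adjoints in $B$) that $p$ multiplies $A$ on both sides into itself. The engine here is that $A$ is weakly compact: for any $a,b \in A$ the map $x \mapsto axb$ carries $A^{**}$ into $A$. Taking a cai $(e_t)$ of $D$ with $e_t \to p$ weak*, for $a \in A$ we get $e_t a = M_{e_t, 1}$-type expressions landing in $A$; the issue is whether the weak* limit $pa$ stays in $A$. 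This does not follow from weak compactness of a single $M_{a,b}$ directly, so the real argument must instead show that the open projection $p$ is also closed — i.e. $1-p$ is open — because a projection that is both open and closed is automatically a multiplier (it is then a central-like idempotent giving a complemented decomposition $A = D \oplus (1-p)A^{**}p^{\perp}\cap A$, and each summand is an ideal-like piece in $M(A)$; more simply, $p$ and $1-p$ both open means $p \in M(A)$ by the standard correspondence, cf.\ the discussion of nc-discreteness after Proposition \ref{pehs}).

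So the heart of the proof is: \emph{for $A$ a HSA in its bidual, every open projection in $A^{**}$ is closed}. I would argue this via the HSA $D$ and its r-ideal. Since $D$ is a HSA in its own bidual and $D D^{**} = pA^{**}p \cdot$ lands in $A$, consider $Dp^{\perp}$ within $A^{**}$: for $d \in D$ and the cai, $d = \lim e_t d$ with $e_t p = e_t$, so $d p = d$, meaning $D \subset A^{**}p$; symmetrically $D \subset pA^{**}$. The complementary r-ideal data then shows $A(1-p)A \subset A$ is again a HSA whose support is $\leq 1-p$, and one chases that the "gap" projection must vanish. Concretely, I expect to use weak compactness to show the quotient map $A \to A/R$ (or $A \to A/D$) is again a weakly compact operator algebra (Lemma \ref{hsher}), whose bidual $A^{**}/R^{\perp\perp} = (1-p)A^{**}(1-p)$-type corner has an identity; pulling that identity back and combining with the identity of $R^{**} = pA^{**}p$ reconstructs $1_{A^{**}}$ as $p + (1-p)$ with \emph{both} summands supports of HSAs in $A$, forcing $1-p$ open.

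The step I expect to be the main obstacle is precisely this last one — promoting "open" to "closed" — since it is exactly the difference between a generic operator algebra and an nc-discrete one, and the only new hypothesis available is the HSA-in-bidual condition (equivalently weak compactness of all $M_{a,b}$). I anticipate the decisive move is: given the open projection $p$ with HSA $D$, the operator $x \mapsto \text{(left mult.\ by a cai element of }D)$ on $A$ is weakly compact, so its bidual sends $1-p$-supported elements of $A^{**}$ to $A$; checking that the resulting element is supported on a closed projection complementary to $p$, and that this complementary projection is itself open (because $A/D$-side is again weakly compact, hence a HSA in its bidual with an identity in its bidual), closes the loop. Everything else — the bidual corner identifications $D^{**} = pA^{**}p$, the multiplier characterization, closedness of the relevant sums — is routine given Corollary \ref{innp}, Lemma \ref{hsher}, and Proposition \ref{cafa}.
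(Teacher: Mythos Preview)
Your identification of the target --- show every open projection $p$ is also closed, hence in $M(A)$ --- is fine, and the preliminary observation that $D = pA^{**}p \cap A$ inherits the HSA-in-bidual property is correct (indeed it is just Lemma~\ref{hsher} applied to the subalgebra $D$). But the mechanism you propose for showing $1-p$ is open does not work: you want to take a quotient $A/R$ or $A/D$ and invoke Lemma~\ref{hsher}, yet $R$ is only a right ideal and $D$ only an inner ideal, so neither quotient is an algebra, and Lemma~\ref{hsher} applies only to two-sided ideals. The identification $A^{**}/R^{\perp\perp} \cong (1-p)A^{**}(1-p)$ is likewise wrong in general (one gets $(1-p)A^{**}$, not the corner). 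So the step you correctly flagged as the main obstacle remains an obstacle, and your plan does not supply the missing idea.

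The paper's proof takes a completely different route and never attempts to show $1-p$ open directly in $A^{**}$. Instead it passes to a generated $C^*$-algebra $B \supset A$ and exploits $C^*$-positivity. The key observation is that $ApA \subset A A^{**} A \subset A$ by the HSA-in-bidual hypothesis, and then Cohen's factorization gives $BpB = B(ApA)B \subset B$. Now one uses that $p$ is open in $B^{**}$: there is a net $x_t \in B$ with $x_t \nearrow p$. For $b \in B_+$ one has $b x_t b \nearrow bpb \in B$, so by Mazur's theorem (passing to convex combinations) one may arrange $b x_t b \to bpb$ in norm with $0 \le x_t \le p$. Then $\|(p - x_t)^{1/2} b\|^2 = \|b(p-x_t)b\| \to 0$, forcing $(p-x_t)b \to 0$ and hence $pb \in B$. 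Thus $p$ is a left multiplier of $B$, and a projection that is a one-sided multiplier is automatically two-sided. Finally $pA \subset B \cap A^{\perp\perp} = A$, so $p \in M(A)$. The decisive tools are Cohen factorization to transfer the problem to $B$, and the square-root trick available only in the $C^*$-setting --- neither of which appears in your plan.
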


\begin{proof}
We follow some ideas in
the proof of \cite[Proposition 5.1]{BHN}, which the reader might follow. 
By Lemma \ref{hsher},
 $\Delta(A)$ is a HSA in $\Delta(A)^{\perp \perp} = \Delta(A)^{**}$.
Hence $\Delta(A)$ is an annihilator $C^*$-algebra
by Proposition \ref{cafa}.

Next, let $p$ be an open projection
in $A^{**}$.  Suppose that $A$ is a subalgebra of a $C^*$-algebra $B$, generating $B$ as a $C^*$-algebra.
Then $A p A \subset A$, and by Cohen's factorization
$B p B = B A p A B \subset B$.
There is an increasing net $x_t \nearrow p$, with $x_t \in B$
for all $t$.  If $b \in B_+$,
then $b x_t b  \nearrow b p b \in B$.  Therefore, by Mazur's theorem,
replacing $x_t$ by
convex combinations of the $x_t$,     we may assume that
$b x_t b \to b p b$ in norm, and $0 \leq x_t \leq p$.
Then $\Vert \sqrt{p - x_t} b \Vert^2 = \Vert b (p - x_t) b \Vert \to 0$.
Hence $(p - x_t) b \to 0$, so that $p b \in B$.  Therefore $p$ is
a left multiplier of $B$.  However any projection which is a  left multiplier  is
a two-sided multiplier.  Consequently, $p A \in B \cap A^{\perp \perp} = A$,
so $p$  is
a left multiplier of $A$.  Similarly, $p$ is a right  multiplier of $A$, hence $p \in M(A)$.
\end{proof}

{\bf Remark.}  In this case there are bijective correspondences
between the right ideals in $A$ with left cai, HSA's in $A$,
and orthogonal projections in the multiplier
algebra $M(A)$.    This will follow from the previous theorem and the basic facts
about HSA's (see \cite[Section 2]{BHN}).

\begin{lemma} \label{ma}  Suppose that an approximately unital operator algebra $A$
is a HSA in its bidual, and that $\pi : A \to B(H)$ is a nondegenerate
completely isometric representation.   Then $A^{**} \cong \overline{\pi(A)}^{w*}$
as dual operator algebras.  Also, $A^{**}$ is an essential extension of $A$
 (that is,  every completely
 contractive linear map $T : A^{**} \to B(H)$, which
restricts to a complete isometry on $A$, is a complete 
isometry), and $A^{**}$ embeds as a unital subalgebra of a $C^*$-algebra
$I(A)$ which is an injective envelope of $A$. 
\end{lemma}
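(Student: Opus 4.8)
The plan is to prove the three assertions in turn, using the first to set up the others. \emph{Assertion (1)}: Let $\tilde{\pi} : A^{**} \to B(H)$ be the unique normal (weak*-continuous) unital homomorphism extending $\pi$; this exists by the standard extension theorem for nondegenerate completely contractive homomorphisms of operator algebras (see e.g.\ \cite{BLM}), is completely contractive, and has range $\overline{\pi(A)}^{w*}$, this range being weak*-closed by Krein--Smulian since it contains the weak*-dense set $\pi(A)$. The point is that $\tilde{\pi}$ is a complete isometry. Fix a cai $(e_t)$ for $A$. For $[\eta_{ij}] \in M_n(A^{**}) = (M_n(A))^{**}$ the matrix $[e_s \eta_{ij} e_t]$ lies in $M_n(A)$ --- this is where the hypothesis $A A^{**} A \subset A$ enters --- and converges weak* to $[\eta_{ij}]$, since the Arens product on $A^{**}$ is separately weak*-continuous and $e_t \to 1_{A^{**}}$ weak*. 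As dual norms are weak*-lower-semicontinuous, and $\|[e_s \eta_{ij} e_t]\|_{M_n(A)} = \|\mathrm{diag}(\pi(e_s))\,[\tilde{\pi}(\eta_{ij})]\,\mathrm{diag}(\pi(e_t))\| \leq \|[\tilde{\pi}(\eta_{ij})]\|$ (using $\|\pi(e_t)\| \leq 1$), we obtain $\|[\eta_{ij}]\| \leq \|[\tilde{\pi}(\eta_{ij})]\|$; the reverse inequality is complete contractivity of $\tilde{\pi}$. So $\tilde{\pi}$ is a weak*-continuous surjective complete isometry onto $\overline{\pi(A)}^{w*}$, hence a weak*-homeomorphic completely isometric isomorphism, which is assertion (1).

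\emph{Assertion (3)}: Realize an injective envelope of $A$ in Hamana's form $I(A) = \varphi(B(H))$, with its Choi--Effros product, where, identifying $A$ with $\pi(A) \subset B(H)$, the map $\varphi : B(H) \to B(H)$ is a minimal $A$-projection --- a completely positive idempotent with $\varphi|_A = \mathrm{id}_A$ which is moreover an $A$-bimodule map (see \cite[Section 4.2]{BLM}). Identify $A^{**}$ with $M := \overline{\pi(A)}^{w*} \subset B(H)$ via (1), and note $I_H \in M$ (it is the weak* limit of the cai $(\pi(e_t))$). I claim $\varphi$ fixes every $\eta \in M$. Indeed, for $a, b \in A$ the element $a \eta b$ lies in $A$ by the HSA hypothesis, so $a \varphi(\eta) b = \varphi(a \eta b) = a \eta b$; putting $a = \pi(e_s)$, $b = \pi(e_t)$ and letting $s, t$ run through the cai, we have $\pi(e_s)\varphi(\eta)\pi(e_t) \to \varphi(\eta)$ and $\pi(e_s)\eta\pi(e_t) \to \eta$ strongly (since $\pi(e_t) \to I_H$ strongly), whence $\varphi(\eta) = \eta$. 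Thus $M \subset \varphi(B(H)) = I(A)$; as $M$ is a weak*-closed subalgebra of $B(H)$, the Choi--Effros product of $I(A)$ restricts to ordinary operator multiplication on $M$, so $M$ is a subalgebra of $I(A)$ containing $A$ and the unit $I_H = 1_{A^{**}} = 1_{I(A)}$. This proves assertion (3).

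\emph{Assertion (2)}: This now follows from (3) together with the fact that $A \subset I(A)$ is an essential extension (a defining property of the injective envelope; see \cite[Section 4.2]{BLM}). Given a completely contractive $T : A^{**} \to B(K)$ whose restriction to $A$ is a complete isometry, extend $T$ to a completely contractive $\hat{T} : I(A) \to B(K)$ using the inclusion $A^{**} \subset I(A)$ from (3) and injectivity of $B(K)$. Then $\hat{T}|_A = T|_A$ is a complete isometry, so $\hat{T}$ is a complete isometry by essentiality of $A$ in $I(A)$, and therefore so is $T = \hat{T}|_{A^{**}}$.

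I expect the crux to be the step in the proof of (3) asserting that $\varphi$ fixes all of $A^{**}$. The naive route --- extending $\mathrm{id}_A : A \to I(A)$ to $A^{**}$ by injectivity and passing to weak* limits --- fails, since such an extension need not be weak*-continuous. The argument above circumvents this by working inside Hamana's concrete model and exploiting that a \emph{minimal} $A$-projection is an $A$-bimodule map, which makes the sandwich identity $a \varphi(\eta) b = a \eta b$ available and lets it be pushed to the strong limit using only $\pi(e_t) \to I_H$ strongly. The remaining ingredients --- separate weak*-continuity of the Arens product, weak*-lower-semicontinuity of the operator-space norm, the identification $M_n(A^{**}) = (M_n(A))^{**}$, and the basic theory of the injective envelope --- are routine.
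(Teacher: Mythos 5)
Your argument for the first assertion is correct, and in fact cleaner than the paper's: the paper identifies $\tilde{\pi}(A^{**})$ with the concrete quasimultiplier space $QM_\pi(A) = \{T : \pi(A)T\pi(A) \subset \pi(A)\}$ by a limiting argument, whereas you get the complete isometry directly from $[e_s\eta_{ij}e_t] \in M_n(A)$, weak* lower semicontinuity of the dual norm, and contractivity of the $\pi(e_t)$. That part stands.

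There is, however, a genuine gap in your proof of assertion (3), on which (2) then depends: the claim that a minimal $A$-projection $\varphi$ is an $A$-bimodule map for the \emph{ambient} product of $B(H)$, i.e.\ $\varphi(a\eta b) = a\varphi(\eta)b$ for $a,b \in A$. This is true when $A$ is a $C^*$-algebra (multiplicative domain), but it is not a consequence of minimality for nonselfadjoint $A$ and is false in general. Concretely, take $A = A(\Ddb)$ acting on $H^2$ and $a = T_z$, $\eta = T_z^*$, $b = 1$: a bimodule map would force $\varphi(T_zT_z^*) = T_z\varphi(T_z)^* = T_zT_z^*$, hence $T_z$ would lie in the multiplicative domain of $\varphi$; iterating, $\varphi$ would fix all of $C^*(A)$ (the Toeplitz algebra) and one would conclude $C^*(A) = C^*_e(A) = C(\Tdb)$, which is false. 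What minimality (via Youngson's theorem) actually yields is only $\varphi(a\eta b) = \varphi(a\varphi(\eta)b) = a\circ\varphi(\eta)\circ b$, a bimodule identity for the \emph{Choi--Effros} product; that is not enough to run your strong-limit argument inside $B(H)$, since $\varphi$ is not strongly continuous. This is exactly the hole that the paper fills by invoking the Kaneda--Paulsen quasimultiplier theorem \cite{KP}: each $\eta \in A^{**}$, viewed as a quasimultiplier $z$ of $A$ (so $azb := a\eta b \in A$ by the HSA hypothesis), corresponds to a \emph{unique} $w \in {\rm Ball}(I(A))$ with $awb = azb$, and the cai is then used to show $\Vert z \Vert \leq \Vert w \Vert$, giving the unital (complete) isometry $A^{**} \to I(A)$. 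Your deduction of (2) from (3) via injectivity and rigidity of $I(A)$ is fine once (3) is repaired along these lines.
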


\begin{proof}  Most of this is essentially in \cite{Kan},
and follows standard ideas (see \cite[Section 2.6]{BLM}),
but for completeness we sketch a proof.
Define $QM_\pi(A) = \{ T \in B(H) :  \pi(A) T \pi(A)  \subset
\pi(A) \}$.  It is easy to see using \cite[Lemma 2.1.6]{BLM}
that $\pi(A) T \pi(A) = (0)$ implies that $T = 0$.
The canonical weak* continuous representation $\tilde{\pi} : A^{**} \to \overline{\pi(A)}^{w*}$
 maps into $QM_\pi(A)$, since $\pi(a) \tilde{\pi}(\eta) \pi(b) = \tilde{\pi}(a \eta b)
\in \pi(A)$.  Clearly $\tilde{\pi}$  is one-to-one, since $\tilde{\pi}(\eta) = 0$ implies
$a \eta b = 0$ for all $a, b \in A$, so that $\eta = 0$.
In fact $\tilde{\pi}(A^{**}) = QM_\pi(A)$.  To see this
suppose that  $T \in QM_\pi(A)$, $\Vert T \Vert \leq 1$.
Suppose that $\pi(e_t) T \pi(e_s) = \pi(a_{t,s})$ for each $s, t$.
For fixed $s$ the net  $(a_{t,s})$ has a subnet converging to $\eta_s \in
{\rm Ball}(A^{**})$.  Suppose that $\eta$ is a weak* limit point for
$(\eta_s)$ in ${\rm Ball}(A^{**})$.  Then $$\pi(a) \tilde{\pi}(\eta) \pi(b)
= \lim_\mu \pi(a) \tilde{\pi}(\eta_{s_\mu}) \pi(b) , \qquad a, b \in A ,$$
where this limit and the ones below are weak* limits.
However if $s = s_\mu$ is fixed, there is a net $(t_\nu)$ such that
$$\pi(a) \tilde{\pi}(\eta_{s}) \pi(b) = \lim_\nu
\pi(a) \tilde{\pi}(a_{t_\nu,s} \pi(b) =
\pi(a) \pi(e_{t_\nu}) T \pi(e_{s})  \pi(b) = \pi(a) T \pi(e_{s})  \pi(b) .$$
So $$\pi(a) \tilde{\pi}(\eta) \pi(b) = \lim_\mu \pi(a) \tilde{\pi}(\eta_{s_\mu}) \pi(b)
= \lim_\mu \pi(a) T \pi(e_{s_\mu})  \pi(b) = \pi(a) T \pi(b) .$$
So $\tilde{\pi}(\eta) = T$.  Thus $\tilde{\pi}$ is isometric, hence its range is
weak* closed, hence $QM_\pi(A) = \overline{\pi(A)}^{w*}$.
Once we that know $\tilde{\pi}$ is isometric,
applying  this in the setting of $M_n(A^{**}) \cong M_n(A)^{**}$ shows that
$\tilde{\pi}$ is completely isometric.

Suppose that $z \in {\rm Ball}(QM(A))$.  By the main theorem
in \cite{KP} (see also the quicker proof of \cite[Theorem 5.2]{BNmetricII}), $z$ corresponds to a
unique element $w \in {\rm Ball}(I(A))$ such that $a w b = a z b$ for all
$a, b \in A$.   This defines a contractive one-to-one unital map $\rho :
QM(A) \to I(A)$ which extends the identity map on $A$.  If this $w$ has norm $\kappa$
then $\Vert  e_t  z e_s \Vert \leq \kappa$ for each $s, t$, so that
$\Vert z \Vert \leq \kappa$.  Hence $\rho :
A^{**} \to I(A)$ is a unital isometry.  By the usual trick (using the isometry applied on
$M_n(A^{**}) = M_n(A)^{**}$,  and the fact that  $I(M_n(A)) = M_n(I(A))$ by 4.2.10 in \cite{BLM}), 
$\rho$ is a complete isometry.  Since $I(A)$ is  an essential extension of $A$,
we deduce that $A^{**}$ is an essential extension of $A$.   Now suppose that
$I(A^{**})$ is an injective envelope of $A^{**}$ containing $A^{**}$ as
a unital subalgebra.  Any complete contraction on $I(A^{**})$ which
restricts to a complete isometry on $A$,  must be a
complete isometry on $A^{**}$ by the last part, hence is a
complete isometry on $I(A^{**})$ by rigidity.   So $I(A^{**})$ is rigid for $A$,
hence is an 
injective envelope of $A$ with the desired property. 
     \end{proof}

{\bf Remark.}  The bulk of the first paragraph of the last proof
shows that $\tilde{\pi}$ is completely isometric.  However
this follows immediately from the second paragraph
(that fact there that $A^{**}$ is an essential extension).   Nonetheless
we felt it worthwhile to include a more elementary argument.

\begin{theorem} \label{asp}  Let $A$ be an operator algebra which
is a HSA in its bidual.
\begin{itemize}  \item [(1)]
$A$  is an Asplund space (that is, $A^*$ has the RNP).  
Also,
$A^*$ has no proper subspace that norms  $A$. 
\item [(2)]   $A^{**}$ is a {\em rigid extension} of $A$ in the 
Banach space category (that is, there is only one
 contractive linear map from $A^{**}$ to itself extending $I_A$).   
\item [(3)]   
Any surjective
linear complete isometry $A^{**} \to A^{**}$ is weak* continuous. 
\item [(4)]  There is a unique completely contractive extension $\tilde{\pi} 
: A^{**} \to B(H)$ of any nondegenerate completely contractive representation
$\pi : A \to B(H)$, namely the canonical weak* continuous extension.
In particular, every character of $A^{**}$ is weak* continuous.
\end{itemize} 
\end{theorem}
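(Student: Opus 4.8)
The plan is to derive all four parts from the two structural facts already available: that $A$ is a HSA in its bidual (so $A^{**} \cong \overline{\pi(A)}^{w*}$ and $A^{**}$ is an \emph{essential} extension of $A$, by Lemma \ref{ma}), and that a $C^*$-algebra which is a HSA in its bidual is an annihilator $C^*$-algebra (Proposition \ref{cafa}), together with Theorem \ref{bvol}/Theorem \ref{hsnc}.

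For (1), I would first recall that a Banach space is Asplund iff every separable closed subspace has separable dual, and iff its dual has the Radon--Nikodym property. Since weak compactness of the multiplication passes to closed subalgebras (Lemma \ref{hsher}), it suffices to treat the separable case; but in fact I expect the cleanest route is: the identity map on $A$ factors (using the cai and weak compactness of each $M_{e_t,e_t}$) through weakly compact operators in a way that lets one invoke the characterization that $X$ is Asplund iff $X$ contains no isomorphic copy of $\ell^1$ in a suitable sense --- more precisely, I would use that an operator algebra which is a HSA in its bidual has $A^{**}$ with ``enough'' structure (an annihilator $C^*$-algebra on the diagonal, every projection open and closed) to force $A^*$ to have the RNP. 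The ``no proper norming subspace'' statement should follow from (2): if a proper closed subspace $N \subseteq A^*$ normed $A$, then the inclusion $A \hookrightarrow N^*$ composed with the canonical $N^* \to A^{**}$ would give a nonidentity contractive extension, contradicting rigidity.

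For (2) and (3), the essential extension property from Lemma \ref{ma} does most of the work. Given a contractive linear $T : A^{**} \to A^{**}$ with $T|_A = I_A$, compose with a fixed completely isometric nondegenerate representation $A^{**} \hookrightarrow B(H)$ to get a completely contractive (a contractive map into a dual space extending a complete isometry is automatically completely contractive here, via the $M_n$-trick as in the proof of Lemma \ref{ma}) map $A^{**} \to B(H)$ restricting to a complete isometry on $A$; essentiality forces it to be a complete isometry, and since its range and $A^{**}$'s image are both $\overline{\pi(A)}^{w*}$ one concludes $T = I_{A^{**}}$. For (3), a surjective linear complete isometry $u : A^{**} \to A^{**}$ and its inverse both restrict (after identifying $A^{**}$ with its weak* closure in $B(H)$) to maps that, combined, yield a contractive extension of $I_A$ once one checks $u$ maps $A$ into $A$ --- this last point, that an isometric automorphism of $A^{**}$ preserves $A$, I would get from the intrinsic characterization of $A$ inside $A^{**}$ as the set of elements $\eta$ with $A\eta A \subseteq A$ and the HSA condition, or from the fact that $A$ is the unique minimal essential extension in the other direction; then weak* continuity follows because any such $u$ agrees with $(u|_A)^{**}$ by the rigidity in (2).

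For (4), apply Lemma \ref{ma}: the canonical weak* continuous extension $\tilde\pi$ is completely contractive, and if $\rho : A^{**} \to B(H)$ were another completely contractive extension of $\pi$, then (identifying the copy of $A^{**}$ generated inside $I(A)$) rigidity of the injective envelope forces $\rho = \tilde\pi$ on $A^{**}$; concretely one runs the argument of the last paragraph of the proof of Lemma \ref{ma}. The ``in particular'' is the case $H = \Cdb$. The main obstacle I anticipate is part (1): getting the Asplund/RNP conclusion requires more than formal manipulation --- one must genuinely use that weak compactness of all the $M_{a,b}$, together with the annihilator $C^*$-algebra structure of $\Delta(A)$ and the nc-discreteness from Theorem \ref{hsnc}, prevents $A$ from containing a copy of a space with non-RNP dual (the template being that compact and ideal-in-bidual algebras are Asplund); I would expect to cite a known Banach-algebraic result (e.g. from \cite{Pal} on modular annihilator or compact algebras, or the fact that such $A$ embed appropriately) rather than prove it from scratch.
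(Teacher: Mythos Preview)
The central gap is in your approach to (2), and it propagates to the rest. You want to show that the only contractive linear $T : A^{**} \to A^{**}$ extending $I_A$ is the identity by invoking the essential-extension property of Lemma~\ref{ma}. But that property concerns \emph{completely} contractive maps, and your parenthetical claim that a merely contractive $T$ is ``automatically completely contractive via the $M_n$-trick'' is false: the $M_n$-trick in Lemma~\ref{ma} works because the \emph{same hypothesis} is available at every matrix level, whereas here you are given only scalar contractivity. Even granting complete contractivity, essentiality would only tell you $T$ is a complete isometry, not that $T = I_{A^{**}}$. Your derivation of ``no proper norming subspace'' from rigidity is also backwards: there is a canonical contraction $A^{**} \to N^*$, not $N^* \to A^{**}$. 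For (3), the ``intrinsic characterization'' $\{\eta : A\eta A \subset A\}$ is not intrinsic (it mentions $A$), and a surjective complete isometry need not be multiplicative, so there is no reason it should preserve this set. For (4), injective-envelope rigidity is unavailable because $\pi$ is only completely contractive, not a complete isometry on $A$.

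The paper's route is different and bypasses operator-space structure for (1) and (2) entirely. The key ingredient you are missing is \emph{Hahn--Banach smoothness}: by \cite[Theorem 2.10]{BHN}, if $A$ is a HSA in $A^{**}$ then every $\varphi \in A^*$ has a \emph{unique} norm-preserving extension to $A^{**}$. The Godefroy--Saphar theory \cite{GS} then delivers (1) and (2) directly as Banach-space consequences (Asplund, no proper norming subspace, and rigidity of $A^{**}$ as an extension). Part (3) is quoted from \cite{ABS}. For (4), Hahn--Banach smoothness is again the engine: given any completely contractive extension $\tilde\pi$ and unit vector $\zeta \in H$, the functional $\langle \tilde\pi(\cdot)\zeta,\zeta\rangle$ on $A^{**}$ is a norm-one extension of the state $\langle \pi(\cdot)\zeta,\zeta\rangle$ on $A$, hence by uniqueness it is the weak*-continuous one; this forces $\tilde\pi(1) = I$, and then \cite[Proposition 2.11]{BHN} finishes. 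Your search for an Asplund argument via annihilator or modular-annihilator structure is looking in the wrong place; the relevant literature is the $M$-ideal/Hahn--Banach smooth theory.
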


\begin{proof} 
(1) and (2) follow from \cite[Theorem 2.10]{BHN}, which 
says that such $A$ is `Hahn-Banach smooth',
and well known properties of `Hahn-Banach smooth' spaces
due to Godefroy and coauthors, and others (see e.g.\ \cite{GS}).

(3) \ This 
 follows from the Remark  at the end of Section 5
in \cite{ABS}.   

(4) \ In fact this is true even if $\pi$ is a linear  complete contraction with $\pi(e_t) \to I_H$ weak*. 
Note that if $\tilde{\pi}$ is a completely contractive extension of $\pi$, then for any unit vector
$\zeta \in H$, 
$\langle  \tilde{\pi}(\cdot) \zeta , \zeta \rangle$ is the unique (and hence necessarily weak* continuous)
extension from \cite[Theorem 2.10]{BHN}
of the state $\langle  \tilde{\pi}(\cdot) \zeta , \zeta \rangle$.  Thus $\langle  \tilde{\pi}(1) \zeta , \zeta \rangle
= 1$.  Hence $ \tilde{\pi}(1) = I$, and we may now appeal to  \cite[Proposition 2.11]{BHN}. 
 \end{proof}

{\bf Remark.}  
Being an Asplund space is
hereditary, so any closed subalgebra $C$ of an
operator algebra $A$ which is an Asplund space has
$\Delta(C)$ an Asplund space.  But does not  imply that
 $\Delta(C)$ is an annihilator $C^*$-algebra
(a $C^*$-algebra which is an Asplund space need not be annihilator,
certainly  $C_0(K)^*$ may be a separable $\ell^1$ space without
$K$ being discrete (consider $K$ the one point compactification of
$\Ndb$).  

As in \cite[Proposition 3.14]{Sharma} we obtain: 

\begin{corollary}  If an operator algebra $A$ is a HSA in its bidual, and if 
$A$ is
not reflexive, then it contains a copy of $c_0$.  Similarly, every approximately unital
subalgebra of $A$, and every quotient algebra of $A$, which  is
not reflexive,  contains a copy of $c_0$.
\end{corollary}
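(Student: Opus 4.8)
The plan is to deduce everything from Theorem \ref{asp}(1), which tells us that $A$ (and hence every closed subspace and every quotient) is an Asplund space, together with a classical dichotomy for Banach spaces: a Banach space is reflexive if and only if it contains no copy of $c_0$ and no copy of $\ell^1$. The first step is to recall that an Asplund space contains no isomorphic copy of $\ell^1$; this is a well-known consequence of the fact that $\ell^1$ is not Asplund (its dual $\ell^\infty$ fails the Radon--Nikod\'ym property), and Asplundness is hereditary for closed subspaces. So if $A$ is a HSA in its bidual and is not reflexive, then since $A$ cannot contain $\ell^1$, the reflexivity dichotomy forces $A$ to contain a copy of $c_0$.

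For the second statement I would invoke Lemma \ref{hsher}: any closed subalgebra $B$ of $A$ (if it is approximately unital, hence a HSA in its own bidual by the discussion around Lemma \ref{cohwc}) is again weakly compact, i.e.\ a HSA in its bidual, and similarly any quotient $A/I$ by a closed ideal is weakly compact. Hence Theorem \ref{asp}(1) applies to each of these, making them Asplund spaces as well, and the same $\ell^1$-exclusion argument shows that if such a subalgebra or quotient is not reflexive it must contain a copy of $c_0$. Alternatively, and more cheaply, one can note that Asplundness passes to all closed subspaces and to all quotients by closed subspaces of a Banach space, so one does not even need the algebraic structure of $B$ or $A/I$ beyond the fact that they are closed subspaces or quotients of the Asplund space $A$; the argument then applies verbatim.

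The main (and essentially only) obstacle is locating and citing the correct classical results, since the argument itself is a short chain of implications. Concretely I would cite: (i) that a Banach space failing to contain $\ell^1$ but also failing to contain $c_0$ must be reflexive --- this is a standard consequence of James' characterizations, or of the Bessaga--Pe\l czy\'nski $c_0$-theorem together with Rosenthal's $\ell^1$ theorem; and (ii) that Asplund spaces contain no copy of $\ell^1$, for which one can cite, e.g., Deville--Godefroy--Zizler or Diestel's ``Sequences and Series in Banach Spaces.'' The phrasing ``As in \cite[Proposition 3.14]{Sharma}'' in the excerpt signals that the authors intend exactly this: the one-sided-ideal-in-bidual case was handled identically in Sharma's paper, and the present corollary is the verbatim analogue with ``HSA in its bidual'' in place of ``one-sided ideal in its bidual,'' the only input being that such algebras are Asplund (Theorem \ref{asp}(1)) and that Asplundness, reflexivity, and containment of $c_0$ are all inherited appropriately by subalgebras and quotients via Lemma \ref{hsher}.
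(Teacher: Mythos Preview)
Your central claim --- that a Banach space containing neither $c_0$ nor $\ell^1$ must be reflexive --- is false. The James space $J$ is the standard counterexample: it is not reflexive (quasi-reflexive of order one), has separable dual (hence is Asplund and contains no copy of $\ell^1$), and contains no copy of $c_0$ (since $J^{**}$ is separable while $c_0^{**}=\ell^\infty$ is not). No such dichotomy follows from Rosenthal's $\ell^1$ theorem together with Bessaga--Pe\l czy\'nski, and Asplundness by itself is not enough to force the conclusion.

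The argument the paper has in mind (deferred wholesale to \cite[Proposition~3.14]{Sharma}) rests on more than the Asplund consequence recorded in Theorem~\ref{asp}(1): it uses that an HSA in its bidual is \emph{Hahn--Banach smooth}, i.e.\ every functional on $A$ has a unique norm-preserving extension to $A^{**}$ (this is exactly what \cite[Theorem~2.10]{BHN} supplies, as noted in the proof of Theorem~\ref{asp}). The relevant Banach-space fact, from the circle of results around $M$-embedded and Hahn--Banach smooth spaces due to Godefroy and coauthors (see \cite{GS} and \cite[Chapter~III]{HWW}), is that such a space, if non-reflexive, must contain a copy of $c_0$. For the subalgebra and quotient statements one invokes Lemma~\ref{hsher} to return to the HSA-in-bidual hypothesis and repeats the argument; your ``cheap'' alternative via hereditary Asplundness fails for the same reason as the main argument.
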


The following is a variant of
the `Wedderburn theorem' for operator algebras from \cite{ABS}:

\begin{corollary}  A separable operator algebra $A$ is $\sigma$-matricial in the sense
of \cite{ABS} iff $A$ is semiprime, a HSA in its bidual,  
and every  HSA $D$ in $A$ with ${\rm dim}(D) > 1$, contains a nonzero
projection which is not an identity for $D$.
\end{corollary}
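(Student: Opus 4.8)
The plan is to prove the two implications separately, handling the forward direction by direct computation with the machinery of this paper, and reducing the converse to the Wedderburn-type theorem of \cite{ABS} after verifying its hypotheses.

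\textbf{($\Rightarrow$)} Suppose $A$ is $\sigma$-matricial, so $A$ is completely isometrically a $c_0$-direct sum $\bigoplus_k M_{n_k}$ of full matrix algebras. Each $M_{n_k}$ is unital and finite-dimensional, hence Banach-space reflexive, hence (as recorded just before Lemma \ref{hsher}) a HSA in its own bidual; by Proposition \ref{pehs} the $c_0$-direct sum $A$ is then a HSA in its bidual. Since each $M_{n_k}$ is simple, the partial sums $I_N = \bigoplus_{k \le N} M_{n_k}$ form an increasing family of semisimple ideals of $A$ with dense union, and $A$ (having a cai) has no nonzero left annihilators, so $A$ is semisimple, in particular semiprime, by Proposition \ref{ab}. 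For the projection property, note $A$ is nc-discrete by Theorem \ref{hsnc}, so $M(A) = \ell^{\infty}$-$\bigoplus_k M_{n_k}$ and any HSA $D$ has the form $p A^{**} p \cap A = \bigoplus_k p_k M_{n_k} p_k$ for a projection $p = (p_k) \in M(A)$ whose identity (in $D^{**}$) is $p$. If $\dim D > 1$, then either some $p_k$ has rank $\ge 2$ or at least two of the $p_k$ are nonzero; in the first case a rank-one subprojection of $p_k$ placed in the $k$-th coordinate and zero elsewhere, and in the second case the element equal to $p_k$ in one nonzero coordinate and zero elsewhere, is a nonzero projection of $A$ lying in $D$ but strictly below $p$, hence not an identity for $D$.

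\textbf{($\Leftarrow$)} Now suppose $A$ is separable, semiprime, a HSA in its bidual, and has the stated projection property; I would verify the hypotheses of the Wedderburn-type theorem of \cite{ABS}, whence $A$ is $\sigma$-matricial. First, by Theorem \ref{hsnc}, $A$ is nc-discrete, so HSA's of $A$ correspond to projections of $M(A)$, every projection $q \in A$ is both open and closed with associated HSA $qAq = q A^{**} q \cap A$, and any HSA $D$ with support projection $p_D \in M(A)$ satisfies $D^{\perp\perp} = p_D A^{**} p_D$. The key step is to show that every nonzero HSA $D$ contains a minimal projection of $A$: this is clear if $\dim D = 1$, and if $\dim D > 1$ the hypothesis gives a nonzero projection $q \in D$ that is not an identity for $D$, so $q < p_D$ and hence $(qAq)^{\perp\perp} = q A^{**} q \subsetneq p_D A^{**} p_D = D^{\perp\perp}$, producing a strictly smaller HSA $qAq \subsetneq D$ which again inherits the projection property (a HSA of a HSA being a HSA, and the property passing to sub-HSA's). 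Iterating this, and at limit stages passing to the weak* limit of the decreasing net of projections so obtained (a closed projection in $A^{**}$), one would descend to a one-dimensional HSA, i.e.\ a minimal projection inside $D$. Finally, a maximal family of mutually orthogonal minimal projections of $A$ is countable by separability, and its supremum is $1_{A^{**}}$: otherwise the complementary projection would support a nonzero HSA, which would contain a further minimal projection (semiprimeness ruling out the complement being a square-zero obstruction), contradicting maximality. This is precisely the situation addressed by the Wedderburn theorem of \cite{ABS}, which then identifies $A$ completely isometrically with a $c_0$-direct sum of full matrix algebras.

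\textbf{Main obstacle.} The delicate point in the converse is the descent producing a minimal projection inside an arbitrary nonzero HSA: one must guarantee that the weak* limit of a strictly decreasing net of open-and-closed projections remains nonzero and still lies in $A$ (equivalently, is again open), so that the descent genuinely halts at a one-dimensional HSA. This is where weak compactness of $A$ — its being a HSA in its bidual — and semiprimeness have to be used, and it is also where our hypotheses must be matched precisely against those of \cite{ABS}, in particular checking whether semisimplicity is required there and, if so, deducing it from semiprimeness together with the projection property (a nonzero HSA contained in $J(A)$ would be radical by Theorem \ref{sspas}, hence contain no nonzero projection, contradicting the hypothesis). The remaining steps — finite-dimensionality of each block, that each block is a full matrix algebra, and the assembly of the $c_0$-direct sum — are routine or already contained in \cite{ABS}.
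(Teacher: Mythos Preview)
Your plan is substantially longer than what the paper actually does. The paper's proof is essentially a one-line citation: Theorem 4.23 (vii) of \cite{ABS} already characterizes separable $\sigma$-matricial algebras as precisely the separable, semiprime, nc-discrete operator algebras in which every HSA of dimension greater than one contains a nonzero non-identity projection. The present corollary merely swaps ``nc-discrete'' for ``HSA in its bidual''; Theorem \ref{hsnc} supplies the passage from the latter to the former for the converse, and the forward direction (that a $c_0$-sum of matrix algebras is a HSA in its bidual) is immediate, e.g.\ via Proposition \ref{pehs} as you noted.

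Your forward direction is correct but verbose: semiprimeness and the projection property are already contained in the \cite{ABS} equivalence and need not be reproved. In the converse you correctly take the one essential step---Theorem \ref{hsnc} gives nc-discreteness---but then you detour into rebuilding by hand the content of \cite{ABS}, Theorem 4.23: the existence of minimal projections inside every HSA, a transfinite descent, and the assembly into blocks. This is exactly where your self-identified ``main obstacle'' lives, and as you have set it up it is a real gap: the infimum of a strictly decreasing chain of open-and-closed projections is closed but has no reason to be open or nonzero, so your descent has no stopping mechanism. The point is that you do not need to face this obstacle at all. Once you have semiprime, nc-discrete, and the projection hypothesis, the \cite{ABS} theorem applies directly; whatever argument is required there is internal to that reference, not to the present corollary.
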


\begin{proof}  Follows from Theorem 4.23 (vii) in \cite{ABS} together with
 Theorem \ref{hsnc}.  \end{proof}

The property of being nc-discrete also passes to subalgebras (and
to quotients by closed ideal having cai):

\begin{lemma} \label{ncher}  Let  $A$ be a nc-discrete
operator algebra with cai.  If $B$ is a closed subalgebra
of $A$ with a cai, then $B$ is nc-discrete.  If $I$ is a closed ideal in $A$, and if
$I$ has a cai, then $A/I$ is nc-discrete.
  \end{lemma}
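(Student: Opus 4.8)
The plan is to prove the first (subalgebra) assertion in detail and then deduce the quotient assertion from it by realising $A/I$ as a closed subalgebra of $A$.

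For the subalgebra case, I would start with an arbitrary r-ideal $J$ in $B$, with left cai $(g_t)\subseteq J$, and let $q\in B^{**}$ be its support projection, so $g_t\to q$ weak*. The idea is to inflate $J$ to an r-ideal of $A$: put $R=\overline{JA}$ (closure of the span of products), a closed right ideal of $A$, and observe that $(g_t)$ is still a left cai for $R$ since $g_t(ja)=(g_tj)a\to ja$ in norm and $(g_t)\subseteq J\subseteq R$ is contractive. Because $A$ is nc-discrete, $R=eA$ for a projection $e\in M(A)$; and $e$, being the support projection of $eA=R$, equals the weak* limit of the left cai $(g_t)$ of $R$, so $e=q$. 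Thus $q\in M(A)$ is a projection with $\overline{JA}=qA$; in particular $J\subseteq qA$, whence $qx=x$ for all $x\in J$. One also checks $\overline{JA}\cap B=J$: if $x\in\overline{JA}\cap B$ then $g_tx\to x$ in norm (as $(g_t)$ is a left cai for $\overline{JA}\ni x$), while $g_tx\in JB\subseteq J$ since $J$ is a right ideal of $B$, so $x\in\overline{J}=J$.

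The main point is then to descend $q$ from $M(A)$ to $M(B)$, and here two facts do the work. First, $q$ lies in $B^{**}$: identifying $B^{**}$ with the weak* closure $B^{\perp\perp}$ of $B$ in $A^{**}$ (the canonical embedding $B^{**}\hookrightarrow A^{**}$ being a weak*-continuous algebra homomorphism), $q$ is the weak* limit of $(g_t)$ with $g_t\in B$, hence $q\in B^{\perp\perp}$. Second, for any closed subspace, $A\cap B^{\perp\perp}=B$, since the weak* closure of $B$ in $A^{**}$ meets $A$ in the weak, hence norm, closure of $B$. Combining these, $qB\subseteq qA\cap B^{**}\subseteq A\cap B^{\perp\perp}=B$, and likewise $Bq\subseteq B$, so $q\in M(B)$. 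Finally $J=qB$: indeed $qB\subseteq B\cap qA=B\cap\overline{JA}=J$ by the previous paragraph, and conversely each $x\in J$ satisfies $x=qx\in qB$. Hence every r-ideal of $B$ has the form $qB$ for a projection $q\in M(B)$, i.e. $B$ is nc-discrete.

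For the quotient case: since $I$ has a cai it is in particular an r-ideal of $A$, so by nc-discreteness $I=pA$ for a projection $p\in M(A)$ (its support projection). As $p$ is the identity of the weak*-closed two-sided ideal $I^{\perp\perp}$ of the unital algebra $A^{**}$, it is central in $A^{**}$. Then $(1-p)A=\{a-pa:a\in A\}$ is a closed subalgebra of $A$ (using $pa\in A$), with a cai $((1-p)e_t)$ where $(e_t)$ is a cai for $A$; and $a+I\mapsto(1-p)a$ is a homomorphism of $A/I$ onto $(1-p)A$ which is a complete isometry. Its kernel is $\{a:a=pa\}=pA=I$, and the quotient norm $\inf_{a'\in A}\|a-pa'\|$ is attained at $a-pa$ because $(1-p)(a-pa')=(1-p)a$ and $\|1-p\|\le1$ (one repeats this at each matrix level, using $M_n(A)/M_n(I)\cong M_n(A/I)$). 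By the first part applied to the closed subalgebra $(1-p)A\cong A/I$ of $A$, the quotient $A/I$ is nc-discrete.

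The main obstacle is the subalgebra case, specifically the step of showing that the projection $q$ --- which the hypothesis on $A$ only produces as a multiplier of $A$, arising from the inflated ideal $\overline{JA}$ --- is in fact a multiplier of $B$ and recovers $J$ as $qB$; this is precisely what the two observations $q\in B^{\perp\perp}$ and $A\cap B^{\perp\perp}=B$ are meant to bridge. A secondary point requiring care is the simultaneous bookkeeping of support projections and weak* limits in $B^{**}$ and in $A^{**}$, which is harmless because the canonical map $B^{**}\to A^{**}$ is a weak*-continuous algebra homomorphism onto $B^{\perp\perp}$.
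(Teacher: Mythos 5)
Your proof is correct. For the subalgebra assertion you are doing essentially what the paper does, only phrased with r-ideals rather than open projections: your inflation $R=\overline{JA}$ together with the identification of its support projection with $q$ is precisely the observation that an open projection of $B$ (its defining net lying in $B\subset A$) is open with respect to $A$, hence lies in $M(A)$ by hypothesis, after which $qB\subset qA\cap B^{\perp\perp}\subset A\cap B^{\perp\perp}=B$ (and its mirror image) places $q$ in $M(B)$. The quotient assertion is where you genuinely diverge. The paper embeds $A/I$ into $B^{**}q$ for a containing $C^*$-algebra $B$, with $q$ the complement of the support projection of $I$, and checks by a direct net computation that an open projection of $A/I$ pulls back to an open projection of $A$ and hence to a multiplier. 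You instead apply nc-discreteness to $I$ itself --- legitimate, since a two-sided ideal with a cai is in particular an r-ideal --- to get $I=pA$ with $p\in M(A)$ a projection which is central in $A^{**}$ (being the identity of the weak*-closed two-sided ideal $I^{\perp\perp}$ of the unital algebra $A^{**}$), and then identify $A/I$ completely isometrically isomorphically with the approximately unital closed subalgebra $(1-p)A$ of $A$, reducing the second assertion to the first. This is shorter and avoids the bookkeeping with nets of the paper's version; what the paper's computation buys is an explicit description of how the open projections of the quotient sit inside $A^{**}$, but for the statement as given your reduction is entirely adequate, and all the auxiliary facts you invoke (uniqueness of support projections, $A\cap B^{\perp\perp}=B$, invariance of nc-discreteness under complete isometric isomorphism) are standard.
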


\begin{proof}
  If $A$ is nc-discrete, a subalgebra of a $C^*$-algebra $B$,
 and $D$ is a closed approximately unital subalgebra
of $A$, with $p$ an open projection in $B^{**}$ which lies in
$D^{\perp \perp}$, then $p \in A^{\perp \perp}$, so $p \in M(A)$.
Then $p d \in D^{\perp \perp} \cap A = D$ for all $d \in D$.  Similarly $d p \in D$,
so $p \in M(D)$.  Thus $D$ is   nc-discrete.

Next, suppose that $A$ is nc-discrete, and that $I$ is an approximately unital ideal in $A$.
If $B$ is a $C^*$-algebra generated by $A$, then the
$C^*$-algebra generated in $B$ by $I$, is an ideal $J$ in $B$ \cite[Lemma 2.4]{BR}.
Let $q^\perp$ be the (central) support projection of $J$, which equals
the support projection of $I$. 
We make the identifications in
the proof of Lemma \ref{hsher} above.  Then 
$$A/I \subset A^{**}/I^{\perp \perp} \cong A^{**} q \subset B^{**} q .$$
The map $A/I \to B/J$ is a completely isometric embedding, since its
composition with the `canonical inclusion' $B/J  \subset B^{**} q$
is the complete isometry in the displayed equation above.
An open projection $e$ in $(A/I)^{**}$ which is
open with respect to
$A/I$, can thus be identified with
a projection $p \in B^{**} q$ such that there exists
a net $(x_t) \subset A$ with $x_t q \to p$ weak*, and $q x_t  = p x_t$
for all $t$.  By hypothesis, $q$ is open with respect to $A$, so that there
is a net $(y_s) \subset A$  with $y_s \to q$ weak* and $q y_s = y_s$ for all $s$.
Then $x_t y_s = x_t q y_s \to p q = p$, and $p x_t y_s  = q x_t y_s =
x_t y_s q = x_t y_s$.   It follows that $p$ is open in $A^{**}$.
Thus $p \in M(A)$.  Since $pa q = ap \in A \cap Aq$ for any $a \in A$,
it is clear that $e \in M(A/I)$. 
 \end{proof}

\begin{proposition} \label{dint}  If $A$ is a nc-discrete
approximately unital operator algebra, and is an integral domain,
then $A$ has no nontrivial r-ideals. \end{proposition}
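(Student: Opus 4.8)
The plan is to use the correspondence between r-ideals in $A$ and open projections in $A^{**}$, combined with the nc-discrete hypothesis to upgrade any such projection to a genuine idempotent in $A$, and then to exploit the integral domain hypothesis to force that idempotent to be trivial. First I would let $R$ be a nonzero r-ideal in $A$, so $R$ has a left cai whose weak* limit is an open projection $p \in A^{**}$, and $R = pA^{**} \cap A$. Since $A$ is nc-discrete, $p$ is in fact closed, hence $p \in M(A)$; thus $e := p$ is an idempotent (indeed a projection in the $C^*$-sense, but all we need is $e^2 = e$) lying in $M(A)$, and $R = eA$.

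Next I would transfer the computation into $A$ itself. For any $a \in A$ we have $ea \in A$ and $(ea)(1-e) = 0$, i.e.\ $ea - eae = 0$. Multiplying on the left by an arbitrary $b \in A$ and using commutativity (integral domains here are commutative, and in any case $A$ is asserted to be an integral domain, which for us includes commutativity — if the paper's convention allows noncommutative domains one argues with $eab$ and $abe$ symmetrically), one gets $(1-e)a \cdot b e$-type products vanishing. The cleanest route: pick any $a \in A$; then $a e \cdot a(1-e)=0$ because $e(1-e)=0$ and $e$ is central in $M(A)$ (multipliers of a commutative algebra commute with $A$, and the idealizer of a commutative algebra in its bidual is commutative, so $e$ commutes with everything in $A$). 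Since $A$ is an integral domain this forces, for each $a$, either $ae = 0$ or $a(1-e)=0$.

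Now I would rule out the possibility that $e$ is trivial in the wrong direction. If $a(1-e) = 0$ for \emph{every} $a \in A$, then $a = ae$ for all $a$, so $e$ acts as a left (hence two-sided, by commutativity) identity on $A$, whence $eA = A$ and $R = A$, a trivial r-ideal. Otherwise there is some $a_0$ with $a_0(1-e) \neq 0$, forcing $a_0 e = 0$; I would then show $be = 0$ for \emph{all} $b \in A$: given $b$, the element $be$ satisfies $(be)(a_0(1-e)) = b a_0 e (1-e) = 0$ using centrality of $e$, and since $a_0(1-e) \neq 0$ the integral domain property gives $be = 0$. Hence $eA = (0)$, so $R = eA = (0)$, again trivial. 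Therefore $A$ has no nontrivial r-ideals.

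The main obstacle I anticipate is the bookkeeping around where $e$ lives and that it commutes with $A$: one must be careful that $e \in M(A) \subset A^{**}$, that $R = eA$ exactly (not just densely), and that in the commutative setting $e$ is genuinely central so that products like $b a_0 e (1-e)$ can be rearranged to $b a_0 \, e(1-e) = 0$. All of this is standard once one invokes the nc-discrete hypothesis and the identification of $M(A)$ with the idealizer of $A$ in $A^{**}$ recorded in the introduction; the integral domain hypothesis then does the rest with no analysis, only the cancellation law.
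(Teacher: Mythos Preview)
Your proof is correct and follows essentially the same approach as the paper: use nc-discreteness to place the support projection $p$ in $M(A)$, then exploit the absence of zero divisors. The paper's execution is a bit more direct: instead of working with $ae\cdot a(1-e)$ and then bootstrapping, it simply observes that $A p^\perp \, p A = (0)$, so for independent $a,b\in A$ the elements $ap^\perp,\, pb\in A$ satisfy $(ap^\perp)(pb)=0$, forcing $Ap^\perp=(0)$ or $pA=(0)$ immediately --- this avoids any appeal to centrality of $p$ and works verbatim even if one reads ``integral domain'' as merely ``no zero divisors'' without assuming commutativity.
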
  \begin{proof}
This is clear:  The support
projection $p$ of any r-ideal is in $M(A)$, as is
$p^\perp$, and $A p^\perp pA = (0)$.
\end{proof}

\begin{proposition} \label{fa}  An ideal with cai
in a uniform algebra, which is  nc-discrete,  
is isometrically isomorphic to $c_0(I)$, for some set $I$.
\end{proposition}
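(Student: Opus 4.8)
The plan is to reduce everything to the assertion that the Gelfand spectrum $\Phi_A$ of $A$ is discrete, and then to read off the isomorphism from minimal idempotents supplied by the Shilov idempotent theorem. Write $B\subseteq C(K)$ for the ambient uniform algebra. First observe that $B$, hence its ideal $A$, is semisimple: point evaluations restrict to characters (or to $0$) on $B$ and separate its points, and semisimplicity passes to ideals by \cite[Theorem 4.3.2]{Pal}. Since $A\subseteq C(K)$, for $a\in A$ and $x\in K$ the evaluation at $x$ restricts to a character of $A$ or to $0$, so $\sup_K|a|=\sup_{\Phi_A}|\hat a|$; thus the Gelfand transform is an isometric algebra embedding $A\hookrightarrow C_0(\Phi_A)$. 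Consequently it suffices to prove that $\Phi_A$ is discrete and that $\hat A$ contains $\chi_{\{\phi\}}$ for every $\phi\in\Phi_A$, for then $\hat A=C_0(\Phi_A)=c_0(\Phi_A)$ isometrically, i.e.\ $A\cong c_0(\Phi_A)$ isometrically.

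Next I would set up a dictionary between isolated points of $\Phi_A$ and minimal idempotents. If $\phi$ is isolated, then $\{\phi\}$ is a compact open subset of $\Phi_A$, so the Shilov idempotent theorem produces an idempotent $u_\phi\in A$ with $\hat u_\phi=\chi_{\{\phi\}}$; since $A$ is semisimple this $u_\phi$ is a minimal idempotent and $u_\phi A=\Cdb u_\phi$. Conversely, if $u$ is a minimal idempotent of $A$, then $uAu=uA$ is a unital, commutative, semisimple (Theorem \ref{sspas}), nc-discrete (Lemma \ref{ncher}) operator algebra which is again an ideal in $B$ and has no nontrivial idempotent; were its spectrum to contain two points it would, by the key Lemma below together with the Shilov idempotent theorem, contain a nontrivial idempotent, so in fact $uA=\Cdb u$ and $\hat u=\chi_{\{\phi\}}$ for a single, necessarily isolated, $\phi\in\Phi_A$. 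Hence isolated points of $\Phi_A$ correspond bijectively to minimal idempotents of $A$.

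The key Lemma, which I expect to be the main obstacle, is: \emph{a nonzero semisimple commutative approximately unital nc-discrete ideal $A'$ in a uniform algebra has an isolated point in its Gelfand spectrum.} For this I would invoke standard uniform algebra theory: such an $A'$ has nonempty Choquet boundary (Krein--Milman applied to its state space, together with the identification of the extreme states of a function algebra as point evaluations at Choquet boundary points), and for $\phi$ in the Choquet boundary the singleton $\{\phi\}$ is a p-set (generalized peak set), whence the ideal $\{a\in A':\hat a(\phi)=0\}$ has a contractive approximate identity. Being an r-ideal, it equals $eA'$ for an idempotent $e\in M(A')$; comparing the hull of this ideal computed in the two ways gives $\{\psi\in\Phi_{A'}:\hat e(\psi)=1\}=\Phi_{A'}\setminus\{\phi\}$, a clopen set, so $\{\phi\}$ is clopen and $\phi$ is isolated. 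Assembling these uniform-algebra inputs cleanly --- nonemptiness of the Choquet boundary for the possibly non-unital $A'$, and the implication ``$\{\phi\}$ a p-set $\Rightarrow$ the corresponding ideal is approximately unital'' --- is the delicate point; everything else is bookkeeping with idempotents, hulls, and the cited structural results.

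To conclude, let $P$ be the set of isolated points of $\Phi_A$ and $\{u_\phi\}_{\phi\in P}$ the associated mutually orthogonal minimal idempotents. Then $R:=\overline{\spn}\{u_\phi:\phi\in P\}$ is an r-ideal of $A$ (the finite sums $\sum_{\phi\in F}u_\phi$ form a cai), with $\hat R=c_0(P)$ and hull $\Phi_A\setminus P$. By nc-discreteness $R=e'A$ with $e'\in M(A)$ idempotent, and comparing hulls gives $\{\psi:\hat{e'}(\psi)=1\}=P$, so $P$ is compact and open in $\Phi_A$. Putting $e''=1-e'\in M(A)$, we obtain $A=R\oplus e''A$, and $e''A$ is again a semisimple commutative approximately unital nc-discrete ideal in $B$, now with spectrum $\Phi_A\setminus P$, a space with no isolated points. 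By the key Lemma, $e''A=(0)$; hence $\Phi_A=P$ is discrete and $A=R=\overline{\spn}\{u_\phi:\phi\in\Phi_A\}$, whose Gelfand transform is an isometric isomorphism onto $c_0(\Phi_A)$.
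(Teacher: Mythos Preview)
Your argument is correct and shares its central mechanism with the paper's proof: both use Choquet boundary points to produce approximately unital maximal ideals, and then invoke nc-discreteness to extract projections. The execution differs, however. The paper first observes (via Lemma~\ref{ncher} and Proposition~\ref{cafa}) that $\Delta(A)$ is a commutative annihilator $C^*$-algebra, which hands over a dense family of minimal projections for free; it then cites results from \cite{ABS} (their Theorem~3.3 and Proposition~3.5) to show the supremum of these projections is $1$, and finally brings in the Choquet boundary only to show each $eA$ is one-dimensional. Your route is more directly Gelfand-theoretic and more self-contained: your Key Lemma uses the Choquet boundary both to manufacture the minimal idempotents and---applied a second time to the complementary summand $e''A$---to show they exhaust $A$, thereby avoiding the diagonal and the external \cite{ABS} citations in exchange for the Shilov idempotent theorem and standard p-set machinery. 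Two minor remarks: the claim that $P$ is \emph{compact} in $\Phi_A$ is unjustified (clopen in a locally compact space need not be compact), but you never use it, so nothing is lost; and the delicate non-unital point you flag is indeed handled by passing to $(A')^1$, observing that its Choquet boundary cannot reduce to $\{\infty\}$ unless $A'=0$, and recovering a cai for $\ker\phi\subset A'$ via the intersection argument of Corollary~\ref{innp}.
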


\begin{proof}  
 If $A$ is a nc-discrete ideal with cai in a uniform  algebra, which we
can take to be $A^1$, then by Lemma \ref{ncher}
and Proposition \ref{cafa}, $\Delta(A)$ is a commutative
annihilator $C^*$-algebra.  Thus $\Delta(A)$ is densely spanned by its
minimal projections.  If $f$ is the sup of these minimal projections
in $\Delta(A)^{**}$, then obviously $f$ is open with respect
to $\Delta(A)$, hence open with respect to $A$, and  therefore is also
closed and is  in $M(A)$ (since $A$ is  nc-discrete).   
Then $J = A f^\perp$ is an ideal with cai in a uniform  algebra
and $J$ possesses no projections (for these would have
to be in $\Delta(A)$).  On the other hand, if $f \neq 1$
then $J$ has proper closed
1-regular ideals by Theorem 3.3 in \cite{ABS} if necessary,
and since $J$ is nc-discrete (by Lemma \ref{ncher}) it contains nontrivial
projections by Proposition 3.5  in \cite{ABS}. So $f = 1$.

If $e$ is a minimal projection in
$\Delta(A)$, then $eA$ is a uniform algebra containing no
nontrivial projections, hence containing no proper nonzero
closed ideals with cai (or else the support projection $f$ for
such an ideal would satisfy $f = fe \in A$, contradicting
minimality of $e$).  However every nontrivial uniform algebra contains
proper closed ideals with cai (for example those
associated with Choquet boundary points).  Thus $eA = \Cdb e$.
Hence $1_{A^1}$ is the sum of a family $\{ e_i : i \in I \}$ of
mutually orthogonal
algebraically minimal projections
in $A$, and so $A \cong c_0(I)$.    \end{proof}

In \cite{ABS} it is conjectured that $C^*$-algebras are exactly the  operator algebras
 satisfying conditions of the type: every closed
left ideal has a right cai.  
The following is a complement to Theorem 5.1 of \cite{ABS}.  The hypotheses
can be weakened further, we just state a simple representative form of the
result:

\begin{proposition}  A semisimple operator algebra $A$ which is nc-discrete,
such that every right ideal in $A$ has a left cai,
is an annihilator $C^*$-algebra.
\end{proposition}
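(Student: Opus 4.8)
The plan is to reduce everything to the single assertion that $A$ is a $C^*$-algebra: once this is known, Proposition \ref{cafa} applies (since $A$ is nc-discrete) and immediately yields that $A$ is an annihilator $C^*$-algebra. So the heart of the matter is to prove that $A$ is self-adjoint.

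First I would exploit the hypotheses to obtain a rigid description of the closed right ideals of $A$. Every closed right ideal $R$ inherits a left cai by hypothesis, so $R$ is an r-ideal, hence corresponds to an open projection, which by nc-discreteness lies in $M(A)$; conversely, if $(u_t)$ is a cai for $A$ and $e\in M(A)$ is a projection, then $(eu_t)$ is a left cai for $eA$. Thus the closed right ideals of $A$ are exactly the $eA$ with $e$ a projection in $M(A)$. I would then use this to show that $A$ is a semisimple annihilator algebra in the sense of \cite[Chapter 8]{Pal}: the left annihilator of a closed right ideal and the right annihilator of a closed left ideal are again closed one-sided ideals, and one checks (by a cai argument, then chasing the resulting identities inside $A^{**}$ using separate weak* continuity of the Arens product together with $1\in A^{**}$) that $l(eA)=(0)$ forces $e=1$, and dually that $r(L)=(0)$ forces $L=A$, while $l(A)=r(A)=(0)$ since $1\in M(A)$. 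By \cite[Corollary 8.7.14]{Pal}, recalled just after Lemma \ref{cohwc}, it then follows that $A$ is compact and is an ideal, hence a HSA, in its bidual.

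At that point $A$ is semiprime (being semisimple), nc-discrete, and a HSA in its bidual, and the classical structure theory of semisimple annihilator algebras writes $A$ as the $c_0$-direct sum of its minimal closed two-sided ideals $\{I_i\}$, each a simple annihilator operator algebra possessing a minimal idempotent. For such a minimal idempotent $e$, the algebra $eAe$ (a HSA in $A$, and a spectral subalgebra by Proposition \ref{qi}) is a complex Banach division algebra, so Gelfand--Mazur gives $eAe=\Cdb e$; hence $e\in A$ and the socle of each $I_i$ is spanned by a system of matrix units. Invoking the Wedderburn-type results of \cite{ABS} (compare the Corollary following Theorem \ref{asp}, via \cite[Theorem 4.23]{ABS}, and that each $I_i$ is nc-discrete and a HSA in its bidual), each $I_i$ is an elementary $C^*$-algebra $\Kdb(H_i)$; hence $A$ is the $c_0$-direct sum of the $\Kdb(H_i)$, an annihilator $C^*$-algebra. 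One could also shorten part of this by appealing directly to \cite[Theorem 5.1]{ABS}, to which the present proposition is a complement.

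I expect the main obstacle to be upgrading the one-sided hypothesis to the two-sided annihilator-algebra property: the assumption only directly controls right ideals, so the dual statement $r(l(L))=L$ for closed left ideals $L$ must be squeezed out of nc-discreteness, which is exactly what makes the hypothesis self-dual at the level of support projections, together with a somewhat delicate annihilator computation in $A^{**}$ (or in an enveloping von Neumann algebra of $A$). The secondary difficulty is to conclude genuine $C^*$-ness rather than merely ``compact semisimple operator algebra with a matrix-unit structure but a possibly exotic cross norm''; this is where nc-discreteness and the operator-algebraic Wedderburn machinery of \cite{ABS} are indispensable, and where ``semisimple'' (rather than just ``semiprime'') is used to make the division-algebra/Gelfand--Mazur step bite.
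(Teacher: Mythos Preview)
Your overall plan---reduce to showing $A$ is a $C^*$-algebra, then invoke Proposition~\ref{cafa}---is sound, and your final remark about appealing to \cite[Theorem~5.1]{ABS} is exactly the paper's route. The paper's proof is extremely short: it observes that $A$ is a \emph{left} annihilator algebra (immediate, since every proper closed right ideal is $eA$ with $e\in M(A)$, $e\neq 1$, so $A(1-e)\subset l(eA)$ is nonzero), deduces dense socle from semisimplicity via \cite[Chapter~8]{Pal}, and then defers entirely to the argument of \cite[Theorem~5.1]{ABS}.

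The gap in your main approach is the step upgrading to a \emph{two-sided} annihilator algebra. You write that ``dually $r(L)=(0)$ forces $L=A$'' and that nc-discreteness ``makes the hypothesis self-dual at the level of support projections,'' but this is not justified: nc-discreteness is indeed left/right symmetric, but the hypothesis that every closed right ideal has a left cai is not, and an arbitrary closed left ideal $L$ need not a priori have the form $Af$ for a projection $f\in M(A)$. The statement $r(L)=(0)\Rightarrow L=A$ is in fact true here, but the correct argument passes through dense socle (if $r(L)=(0)$ then $Le\neq 0$ for each minimal idempotent $e$, whence $Le=Ae$ by minimality, so $L$ contains the socle and hence equals $A$). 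In other words, you must first establish dense socle from the one-sided annihilator condition---which is precisely what the paper does---before the two-sided property becomes available. Once you see this, the detour through \cite[Corollary~8.7.14]{Pal}, the $c_0$-decomposition, Gelfand--Mazur, and the Wedderburn machinery of \cite{ABS} is unnecessary: the paper simply hands the dense-socle conclusion to \cite[Theorem~5.1]{ABS} and is done.
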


\begin{proof}  It is obvious that $A$  is  
a left annihilator algebra. 
Thus $A$ has dense socle (see e.g.\ \cite[Chapter 8]{Pal}).
The rest is as in Theorem 5.1 of \cite{ABS}.
   \end{proof}

\section{Examples of operator algebras that are ideals in their biduals}

 In this section we list several examples answering natural questions that 
arise when investigating some of  the topics of this paper.

\subsection{A reflexive semisimple operator algebra}  \label{ell2}   A unitization
of some operator algebra structure on $\ell^2$ with pointwise product,
will be a unital reflexive commutative semisimple non-compact
operator algebra.   One such 
operator algebra structure on $\ell^2$ may be explicitly 
represented as follows.    Identify $\Ndb$ with two disjoint copies 
of $\Ndb$, and consider the span of $E_{1k} + E_{kk}$, with $k$ in the 
second copy of $\Ndb$, and $1$ here from the first copy.     This example
 is  not a modular annihilator algebra (because the canonical maximal
ideal has no nonzero annihilator), but it is 
a Duncan modular annihilator algebra in the sense of  \cite[Chapter 8]{Pal}, 
which is more general than a   modular annihilator algebra.  This example  
has no nontrivial r-ideals, since it is reflexive and has no 
nontrivial projections.   Its spectrum is the one point compactification
of $\Ndb$, which is a scattered topological space.

\subsection{A nc-discrete semisimple 
 operator algebra which is not a HSA in its bidual} \label{ell1}

It is known that $\ell^1$ with pointwise product is isomorphic to  
an operator algebra $A$ say (actually this may be done in many ways,
see e.g.\ \cite[Chapter 5]{BLM}),  and it is
semisimple.   Let $A^1$ be the unitization of $A$, then
$A^1$ is commutative, unital, semisimple, and it is  not an ideal in its 
bidual, since it is unital but 
not reflexive.   
We claim that the total number of orthogonal projections in $A^1$
is finite.  To see this, let $e_j$ be the minimal idempotents in $A$ 
coming from the canonical basis for $\ell^1$.
If $p=\lambda 1 + a$ is a projection in $A^1$ then
$\lambda$ is either 1 or 0, so either $p$ or $1-p$ is in $A$.
Also,  the only
projections in $A$ are finite sums of some of the $e_j$.
The sup of a finite family of orthogonal projections is
an orthogonal projection, so if there were infinitely 
many distinct  orthogonal projections in $A$ then
there would be arbitrarily large finite
sums of the $e_j$ represented in the operator algebra as
norm 1 projections.  This is impossible, because in $\ell^1$ the norm
of a sum of $n$  of the $e_j$ is $n$. So we have just finitely many
orthogonal projections in $A$, and the orthogonal projections in  
$A^1$ are these projections and their complements.  
Depending on the
 choice of representation, any finite ring of projections of $\ell^1$ 
can be the ring of orthogonal projections in $A$ (by basic similarity theory
such a finite family of idempotents are simultaneously similar to orthogonal 
projections); and the r-ideals
of $A^1$ include the unital 
ideals $pA$ and $(1-p)A^1$ for each projection $p$ in the chosen
ring. 

We claim that the just mentioned ideals are the only r-ideals of $A^1$, 
so that $A^1$ is nc-discrete.  
To see this, suppose  that  $J$ is an r-ideal. 
   Case 1:  $x + 1 \notin J$ for all $x \in A$.  In this case,
 $J$ is an ideal  in $A$.  Setting $E = \{ j \in \Ndb : e_j \in J \}$, it is easy to see
that $J = J_E$, where $J_E$ consists of the members of $A$ with `$j$th coordinate' 
zero for all $j \notin E$.   This is isomorphic to $\ell^1(E)$.
If $E$ is finite then $J$ is finite dimensional, hence $J = e A^1$
for a projection $e \in A^1$ as desired.
However  if $E$ is
infinite then $\ell^1(E)$ with pointwise product,
or equivalently $\ell^1$ or $A$, cannot have a bai.  Indeed if $A$ had 
a bai, then $A \subset B(A)$ via the regular representation, and then
the argument at the start of \cite[Section 4]{ABS} gives the 
contradiction that $(\sum_{k=1}^n \, e_j)$ is uniformly bounded.
      
Case 2: $x + 1 \in J$ for some $x \in A$.  By the argument above, $J \cap A = J_E$ for some set $E \subset \Ndb$.  If $y + 1 \in  J$ for some $y \in A$, then
$x-y \in J_E$.  It follows that $J = \Cdb (1+x) + J_E$.  
    If $j \notin E$ then since $e_j + x e_j \in J$
we must have $x e_j = -e_j$.   This can happen for at most a finite number of $j$; that is
$\Ndb \setminus E$ is finite.
Let $q$ be the sum of the $e_j$ for $j \in \Ndb \setminus E$.   Then $x + q  \in J_E \subset J$,
so that $1 - q = 1+x - (x+q) \in J$.  Since  $q(1+x) = 0$ we see that $J$ has an  identity 
$1-q$, which necessarily has norm $1$ since $J$ is approximately unital.      

Note that $A^1$ is not a modular annihilator algebra since $A$ has
no annihilator, but it is a Duncan modular annihilator algebra
in the sense of \cite[Section 8.6]{Pal}.  

\medskip

 The remainder of our examples are commutative and radical operator algebras.
In this connection we remark that there are quite a number of papers
on commmutative radical operator algebras  in the literature,
but most of these algebras are not approximately unital.
See for example  \cite{Dixr}
and \cite{PW}.   Indeed in  \cite{PW} and several 
related papers by Wogen, Larson, and others, one aim is to 
study an operator $T$ in terms
of the norm closed algebra oa$(T)$ generated by $T$, 
particularly  in cases where the latter algebra is radical.  
The following is one of the best studied examples:

\medskip

\subsection{The Volterra operator and a subquotient of the disk algebra}  Let $V$ be the Volterra operator on $[0,1]$.
Let $A_V$ be the norm closed algebra
generated by $V$.  We may write $A_V = {\rm oa}(T)$ for an operator $T$ with $\Vert I - T \Vert \leq 1$,
indeed let $T = I - (I+V)^{-1}$ (it is well known 
that the norm of $(I+V)^{-1}$ is $1$ and its spectrum
is $\{ 1 \}$).   We have oa$(T) \subset A_V$, and the converse inclusion 
holds since $V= (I-T)^{-1} T$.   This algebra has been studied extensively,
for example in \cite{PW} and \cite[Corollary 5.11]{Dav}.  
It is commutative, approximately unital,
compact, radical, and is an ideal in its bidual.
  Indeed, $M(A_V) = V' \cong A_V^{**}$
\cite[Corollary 5.11]{Dav}.
As we said in \cite[Section 5]{BRead},
has no r-ideals;
indeed all the closed ideals in this algebra
are known.
The algebra $A_V$ is nc-discrete
but not semiprime, in fact it has a dense ideal consisting 
of nilpotent elements.  

Jean Esterle suggested to the second  author in 2009 to look at the example 
$D  = B/[gB]$  where $B$ is the ideal
of functions in the disk algebra which vanish at the point 1,
and $g(z) = \exp((z+1)/(z-1))$.
Although $g$ is  not in the disk algebra, it is well known from the theory of inner
functions that $gB \subset B$, and that $gB$ is a closed proper ideal 
in $B$ (see top of p.\ 84 in \cite{Hof}).
By \cite[Proposition 2.3.4]{BLM}, $D$ is
a commutative operator algebra, and it has a cai since $B$ does.
In fact it turns out that $D$ is completely isometrically isomorphic to
the algebra $A_V$ above generated by the Volterra operator.  See  e.g.\  \cite{PW},
where  it is pointed out  that this 
leads to mutual insight into both the operator theory in $A_V$
and its commutant, and the function theory on the disk
associated with an interesting class of ideals of the disk algebra.
For example we see from this that $D$ is compact as a Banach algebra,
a fact that seems difficult to see by direct computations in $D$.   

 \subsection{Weighted convolution algebras which are ideals in their bidual} \label{weight} 

In this section we consider operator algebras formed from weighted  
convolution algebras $L^1(\Rdb_+,\omega)$.  By a {\em weight} we will mean a measurable function 
 $\omega: \Rdb_+ = [0,\infty)\to(0,
\infty)$ with $\omega(0) = 1$,  which is submultiplicative in the sense that
 $\omega(s+t) \leq \omega(s) \omega(t)$ for all $s, t \geq 0$.   Then for $1\le p<\infty$,
the set  $L^p(\omega)  = L^p(\Rdb_+,\omega)$ of  
 equivalence classes of measurable functions $f:\Rdb_+\to\Cdb$  such that
 $\Vert f \Vert_p =(\int_0^\infty \,
|f(t)|^p \, \omega(t)^p \, dt)^{1/p}<\infty$,  is a Banach space with norm $\Vert f \Vert_p$.  
As in \cite[Section 4.7]{Dal},
$L^1(\omega)$  with convolution product is a Banach algebra, and it is
radical iff $\lim_{t \to \infty} \,
\omega(t)^{\frac{1}{t}} = 0$.  Otherwise it is semisimple.    We write $R_x f$ for 
the right translation of $f$ by $x$; in 
other notation this is $\delta_x *  f$.  Similarly, we  write $L_x f$ for the left translation of $f$ by $x$.
As in \cite[Section 5]{BRead}, convolution induces a contractive homomorphism
$f \mapsto M_f$ from $L^1(\omega)$ into $B(L^2(\omega))$,  and we define ${\mathcal A} = {\mathcal
 A}(\omega)$ to be the norm closure of the set of  operators $M_f$ for $f \in  L^1(\omega)$.  This is an
operator algebra.  We write $\Vert \cdot \Vert_{\rm op}$ for the operator
norm on ${\mathcal A}(\omega)$ or more generally in $B(L^2(\omega))$.   Whenever we refer
below to `the operator norm' it is this one.   

It is known that  
$\frac{1}{\omega}$ is bounded on compact intervals, so the arguments in  \cite[Corollary 5.3]{BRead} 
work to show that ${\mathcal A}(\omega)$ is an integral domain,
and in particular is semiprime, and is not an annihilator algebra.       Being an  integral domain, it has 
no nontrivial idempotents, hence has zero socle.  If it is radical then it 
is a modular annihilator algebra in the sense of \cite{Pal}, by
\cite[Theorem 8.7.2]{Pal}. 
If  $\omega$  is right continuous at $0$ then there is a nonnegative cai  for 
$L^1(\omega)$, and hence for  ${\mathcal
 A}(\omega)$, consisting of constant multiples of 
characteristic functions of a sequence of compact intervals shrinking to 
$0$ (by e.g.\ \cite[4.7.41]{Dal}).   In this case, 
since $L^1(\omega) \cap L^2(\omega)$ is
dense in $L^2(\omega)$, it is clear that ${\mathcal A}(\omega)$ acts nondegenerately
on $L^2(\omega)$.  Hence if also  ${\mathcal A}(\omega)$ is an 
ideal in its bidual then ${\mathcal A}(\omega)^{**}$ may be identified with the
weak* closure of ${\mathcal A}(\omega)$ in $B(L^2(\omega))$ by
Lemma \ref{ma}, and  ${\mathcal A}(\omega)$ possesses no nontrivial r-ideals by Proposition \ref{dint} (and ${\mathcal A}(\omega)^{**}$ contains no nontrivial 
 projections).   We remark in passing that  \cite[Theorem 2.2]{BaDa} states that  if $L^1(\omega)$ contains a nonzero compact element then $\omega$ is radical.  

\begin{lemma} \label{dom}   If a weight $\omega:[0,\infty)\to(0,
\infty)$  is right continuous at $0$, and is not regulated at some $x > 0$ (that is   if 
$\frac{\omega(x+t)}{\omega(t)} \nrightarrow 0$), then ${\mathcal A}(\omega)$ is 
not compact.
\end{lemma}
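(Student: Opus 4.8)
The plan is to show that if $\omega$ is right continuous at $0$ but fails to be regulated at some $x>0$, then there is a single element $a\in\mathcal{A}(\omega)$ for which multiplication by $a$ is not compact; by Lemma \ref{cohwc} (the commutative case), this implies $\mathcal{A}(\omega)$ is not compact. The natural candidate for $a$ is the operator $R_x$ of right translation by $x$ (equivalently convolution by $\delta_x$), or rather a genuine element of $\mathcal{A}(\omega)$ that behaves like it — say $M_f$ where $f$ is a normalized characteristic function of a short interval $[0,\varepsilon]$, so that $M_f$ acts roughly like an average of translations $R_t$ for small $t$. Since $\omega$ is right continuous at $0$, such $f$ exist with $\|f\|_1$ close to $1$, and $M_f \to I$ in the appropriate sense as $\varepsilon\to 0$; so it suffices to show that $R_x$ (or a suitable approximant in $\mathcal{A}(\omega)$) fails to induce a compact multiplication operator on $\mathcal{A}(\omega)$.

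First I would set up the test family witnessing non-compactness. For a parameter $t$, let $g_t$ be the normalized characteristic function of a short interval near $t$, scaled to have $\mathcal{A}(\omega)$-operator-norm $1$; these $g_t$ (or the corresponding operators $M_{g_t}$) form a bounded family in $\mathcal{A}(\omega)$. I would then estimate $\|R_x \cdot M_{g_t}\|_{\mathrm{op}}$ — which is essentially convolution by a bump near $x+t$ — and show that, because $\frac{\omega(x+t)}{\omega(t)}\nrightarrow 0$, these translates stay bounded below in operator norm along a sequence $t_n$, while their mutual "distances" $\|R_x M_{g_{t_n}} - R_x M_{g_{t_m}}\|_{\mathrm{op}}$ stay bounded below for $n\neq m$ (the supports drift apart in $\Rdb_+$, so there is no cancellation and the bumps are essentially disjointly supported after translation). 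This produces a sequence in the image $M_{a,a}(\Ball(\mathcal{A}(\omega)))$ — or more precisely in $R_x\cdot\Ball(\mathcal{A}(\omega))$ — with no norm-convergent subsequence, contradicting compactness.

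The technical heart, and the step I expect to be the main obstacle, is controlling the $L^2(\omega)$-operator norm of convolution by these bumps from both sides: I need a lower bound of the form $\|R_x M_{g_t}\|_{\mathrm{op}} \gtrsim \frac{\omega(x+t)}{\omega(t)}$ (up to constants independent of $t$), and a separation estimate showing the translated bumps for well-spaced $t_n$ are "almost orthogonal" as operators. The lower bound should come from testing $R_x M_{g_t}$ against an $L^2(\omega)$ function concentrated near the origin: convolving shifts its mass to near $x+t$, where the weight is of size $\omega(x+t)$, while the input had weighted norm controlled by $\omega(t)$-type quantities — here is exactly where the failure of regulation $\frac{\omega(x+t)}{\omega(t)}\nrightarrow 0$ is used to extract the sequence $t_n$. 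The separation estimate is more routine: for $t_n$ chosen increasing and spread out, the functions $R_x g_{t_n}$ have essentially disjoint supports, so acting on a common test vector they produce nearly orthogonal outputs in $L^2(\omega)$, forcing the operator differences to be bounded below. Once both estimates are in hand, the non-compactness of $M_{a,a}$ for $a = M_f$ (with $f$ a short normalized bump at $0$, so $M_f a M_f$ dominates the relevant translate) follows, and Lemma \ref{cohwc} finishes the proof.
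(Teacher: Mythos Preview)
Your overall strategy --- pick a bump function, show that convolution by it is not compact on $\mathcal{A}(\omega)$ by testing against a bounded sequence of far-translated bumps with essentially disjoint supports --- is exactly the paper's approach. But there is a genuine confusion in how you set it up, and a key estimate is missing.

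You propose $R_x$ as the non-compact element and then say that $M_f$ with $f$ supported on $[0,\varepsilon]$ ``behaves like it''; but such $M_f$ approximates the identity, not $R_x$ (as you yourself note two sentences later), and there is no passage from ``the multiplier $R_x$ is non-compact'' to ``some element of $\mathcal{A}(\omega)$ has non-compact multiplication''. In the paper's argument $R_x$ never appears; the role of $x$ is solely to control \emph{support widths}. One takes $f$ supported on $[0,x/3]$ and the test sequence $f_n = \omega(a_n)^{-1} R_{a_n} f$, where $(a_n)$ is the sequence along which $\omega(x+a_n)/\omega(a_n) > \epsilon$. The crucial step your outline does not identify is the two-sided bound
\[
\epsilon' \;\le\; \frac{\omega(y+a_n)}{\omega(a_n)} \;\le\; \frac{1}{\epsilon'} \qquad (y \in [0,x]),
\]
obtained from submultiplicativity (the upper bound is $\le \omega(y)$; the lower comes from $\omega(x+a_n) \le \omega(y+a_n)\,\omega(x-y)$). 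Because $f_n$ and $f*f_n$ are supported in $[a_n, a_n+x]$, this single estimate simultaneously gives that $(f_n)$ is bounded in $L^1(\omega)$ \emph{and} that $\|f*f_n\|_{L^2(\omega)}$ is bounded below. The paper finishes by observing that $f*f_n \to 0$ weakly in $L^2(\omega)$ (supports run off to infinity) but not in norm, and bootstraps to the operator norm by testing $M_{f*f_n}$ against $f \in L^2(\omega)$. Your proposed lower bound $\|R_x M_{g_t}\|_{\mathrm{op}} \gtrsim \omega(x+t)/\omega(t)$ and your operator-norm normalization of $g_t$ would, once untangled, require exactly this two-sided control; with it in hand the $R_x$ detour becomes unnecessary, and $M_f$ itself (with $f$ on $[0,x/3]$) is the non-compact element.
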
  \begin{proof}   Since $\limsup_{t \to \infty} \, \frac{\omega(x+t)}{\omega(t)} > 0$, there exist an $\epsilon > 0$ and an unbounded
increasing sequence of numbers  $(a_n)$ such that $\frac{\omega(x+a_n)}{\omega(a_n)} > \epsilon$.  Since $\omega$ is
continuous at $0$, it is bounded near $0$.   By submultiplicativity of $\omega$ it is bounded on any compact interval.
  If $y \in [0,x]$ then $\omega(x+a_n) \leq \omega(y+a_n) \, \omega(x-y)$, and it follows that (changing 
$\epsilon$ if necessary) \begin{equation} \label{rino}  \frac{1}{\epsilon} \leq \frac{\omega(y+a_n)}{\omega(a_n)} \leq \epsilon
, \qquad y \in [0,x]. \end{equation}  If $f$ is a nonzero nonnegative $C^\infty$ function supported on $[0,\frac{x}{3}]$, we claim that 
multiplication by $f$ is not compact on ${\mathcal A}(\omega)$ .  To see this define $f_n = \frac{1}{\omega(a_n)} R_{a_n} f$, which is supported on $[a_n, a_n + \frac{x}{3}]$. 
 We have  $$\Vert f_n \Vert_{L^1(\omega)}
= \int_0^{\frac{x}{3}} \, |f(t)| \, \frac{\omega(t+a_n)}{\omega(a_n)} \, dt \leq  \int_0^{\frac{x}{3}} \, |f(t)| \, \omega(t)  \, dt < \infty,$$  and  
$\Vert f_n \Vert_{L^1(\omega)} \in [\frac{\Vert f \Vert_{L^1(\Rdb_+)}}{\epsilon} ,
\epsilon \Vert f \Vert_{L^1(\Rdb_+)}]$
by (\ref{rino}).   Similarly 
$$\Vert f_n \Vert^2_{L^2(\omega)}
= \int_0^{\frac{x}{3}} \, |f(t)|^2 \, \frac{\omega(t+a_n)^2}{\omega(a_n)^2} \, dt \in 
[ \frac{\Vert f \Vert^2_{L^2(\Rdb_+)}}{\epsilon^2}  ,  \epsilon^2 \Vert f \Vert^2_{L^2(\Rdb_+)}]
.$$ 
Similarly, $\Vert f * f_n \Vert_{L^2(\omega)}  \geq
 \frac{\Vert f * f  \Vert^2_{L^2(\Rdb_+)}}{\epsilon}$. 
 Since $f * f_n$ is supported on $[a_n, \infty)$, this sequence converges
weakly to zero  in $L^2(\omega)$.  Thus $(f  *f_n)$ does not have a norm convergent subsequence in $L^2(\omega)$.
  Similarly,
$(f  *f_n * f)$ does not have a norm convergent subsequence in $L^2(\omega)$, and so 
$(f  *f_n)$ does not have a norm convergent subsequence in ${\mathcal A}(\omega)$.  Yet
$(f_n)$ is a norm bounded sequence in $L^1(\omega)$ and hence in ${\mathcal A}(\omega)$.  
\end{proof}  

\begin{corollary} \label{isc}  If $\omega$ is any weight, and if ${\mathcal A}(\omega)$ is compact
then $L^1(\omega)$  is compact.  \end{corollary}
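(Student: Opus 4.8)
The plan is to prove the contrapositive: if $L^1(\omega)$ is not compact, then ${\mathcal A}(\omega)$ is not compact. The bridge between the two conditions is the regulation dichotomy for weighted convolution algebras on $\Rdb_+$: as is known from the theory of such algebras (see e.g.\ \cite[Section 4.7]{Dal} and \cite{BaDa}), $L^1(\omega)$ is compact as a Banach algebra precisely when $\omega$ is regulated, meaning $\frac{\omega(x+t)}{\omega(t)} \to 0$ as $t \to \infty$ for every $x > 0$. Hence if $L^1(\omega)$ is not compact there is some $x > 0$ at which $\omega$ fails to be regulated, i.e.\ $\frac{\omega(x+t)}{\omega(t)} \nrightarrow 0$, which is exactly the hypothesis appearing in Lemma \ref{dom}. (Only the implication ``$\omega$ regulated $\Rightarrow L^1(\omega)$ compact'' is actually needed; the reverse implication for an arbitrary weight already follows by running the disjoint-supports argument of Lemma \ref{dom} inside $L^1(\omega)$ rather than in $B(L^2(\omega))$, since the functions $f * f_n$ produced there have pairwise disjoint supports marching to $\infty$ and $L^1(\omega)$-norms bounded below.)

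First I would remove the extra regularity assumption from Lemma \ref{dom}. That lemma is stated for weights right continuous at $0$, but this is used there only to guarantee that $\omega$ is bounded on the support of the test function, and hence on compact subintervals. Since $\log \omega$ is a measurable subadditive function on $(0,\infty)$, $\omega$ is automatically bounded on every compact subinterval of $(0,\infty)$; so one may run the proof of Lemma \ref{dom} with $f$ a nonzero nonnegative $C^\infty$ function supported in $[\delta, \delta + \frac{x}{3}]$ for a small $\delta > 0$, in place of $[0,\frac{x}{3}]$. Choosing as there an unbounded increasing sequence $(a_n)$ along which $\frac{\omega(x+a_n)}{\omega(a_n)}$ stays bounded below, and setting $f_n = \frac{1}{\omega(a_n)} R_{a_n} f$, the sequence $(f_n)$ stays bounded in $L^1(\omega)$ while $f * f_n$ and $f * f_n * f$ are supported on intervals going to $\infty$, hence tend weakly to $0$ in $L^2(\omega)$ while keeping their $L^2(\omega)$-norms bounded below. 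So multiplication by $f$ is not compact on ${\mathcal A}(\omega)$, and thus (by Lemma \ref{cohwc}) ${\mathcal A}(\omega)$ is not compact. This version of Lemma \ref{dom} holds for an arbitrary weight.

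Putting the pieces together: if ${\mathcal A}(\omega)$ is compact then, by the arbitrary-weight form of Lemma \ref{dom}, $\omega$ is regulated at every $x > 0$, and then by the cited dichotomy $L^1(\omega)$ is compact. The step I expect to be the real obstacle is the quoted implication ``$\omega$ regulated $\Rightarrow L^1(\omega)$ compact'': proving it from scratch amounts to showing that convolution by a compactly supported $f \in L^1(\omega)$ is a compact operator on $L^1(\omega)$, and the delicate point is that one needs the regulation estimate $\frac{\omega(x+t)}{\omega(t)} \to 0$ to hold uniformly for $x$ in compact sets, which must be extracted from submultiplicativity (or simply imported from the literature). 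Everything else reduces to the translation-to-infinity mechanism already used in Lemma \ref{dom}.
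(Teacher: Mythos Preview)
Your approach is essentially the paper's own: take the contrapositive, invoke Lemma~\ref{dom} to conclude that $\omega$ must be regulated at every $x>0$, and then cite Bade--Dales \cite[Theorem~2.7]{BaDa} for the implication ``regulated $\Rightarrow L^1(\omega)$ compact''. The paper's proof is literally two lines to this effect.

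Where you go beyond the paper is in noticing that Lemma~\ref{dom}, as stated, carries the hypothesis that $\omega$ be right continuous at $0$, while the corollary is announced for an arbitrary weight; the paper's proof simply applies Lemma~\ref{dom} without comment. Your fix---using that a measurable submultiplicative function is automatically bounded on compact subsets of $(0,\infty)$, and shifting the support of the test function $f$ to $[\delta,\delta+\tfrac{x}{3}]$ so that all the relevant values of $\omega$ live over compact subintervals of $(0,\infty)$---is correct and does the job (one should take $\delta$ small relative to $x$, say $\delta<x/6$, so that the supports of $f$, $f_n$, $f*f_n$, $f*f_n*f$ all sit inside $[a_n,a_n+x]$ and the two-sided estimate analogous to (\ref{rino}) goes through with $x-y$ bounded away from $0$). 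So your argument is the paper's argument, made slightly more honest about the hypotheses.
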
    \begin{proof}  By Lemma \ref{dom} we have that 
$\omega$ is regulated at all $x > 0$.  Now apply \cite[Theorem 2.7]{BaDa}.  \end{proof}  

{\bf Remark.}  The converse of Corollary \ref{isc} is false: $L^1(\omega)$  may be compact without 
 ${\mathcal A}(\omega)$ being compact.  This
follows from Theorem \ref{wcnc} below and \cite[Theorem 2.9]{BaDa}.  

\medskip

We say that the radical weight $\omega$ satisfies Domar's criterion
if the function $\eta(t)=-\log\omega(t)$ is a convex function
on $(0,\infty)$,
and for some $\epsilon >0$ we have $\eta(t)/t^{1+\epsilon}\to\infty$ as $t\to\infty$. An obvious example of such a weight is $\omega(t)=e^{-t^2}$. 
 In \cite[Section 5]{BRead} we studied ${\mathcal A}(\omega)$
if $\omega$ satisfies Domar's criterion.

\begin{proposition} \label{dom2}   If a  radical weight $\omega:[0,\infty)\to(0,
\infty)$ satisfies Domar's criterion  then
${\mathcal A}(\omega)$ is compact.
\end{proposition}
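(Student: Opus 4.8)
The plan is to show that the map $b \mapsto M_f\,b\,M_f$ is a compact operator on ${\mathcal A}(\omega)$ for every $f$ in a suitable dense subset of ${\mathcal A}(\omega)$. This suffices: by the definition of a compact algebra it is enough to check that $M_{a,a}$ is compact for all $a$, and the set $S = \{\, a \in {\mathcal A}(\omega) : M_{a,a} \text{ is compact on } {\mathcal A}(\omega) \,\}$ is norm closed, since $\norm{M_{a,a}-M_{b,b}} \le \norm{a-b}\,(\norm{a}+\norm{b})$ makes each $M_{a,a}$ a norm limit of compact operators. For the dense subset I take $\{\, M_f : f \in C_c^\infty((0,\infty)) \,\}$, which is dense in ${\mathcal A}(\omega)$ because $C_c^\infty((0,\infty))$ is dense in $L^1(\omega)$ — here one uses that $\omega$ and $1/\omega$ are bounded on every compact subinterval of $\Rdb_+$, so that $\omega\,dt$ is locally comparable to Lebesgue measure on $(0,\infty)$, together with $\int_0^\delta |g|\,\omega + \int_N^\infty |g|\,\omega \to 0$ — and $g \mapsto M_g$ is a contraction from $L^1(\omega)$ into $B(L^2(\omega))$.

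The substance of the argument is the claim that for $f \in C_c^\infty((0,\infty))$ the convolution operator $M_f$ is a \emph{compact} operator on $L^2(\omega)$. Conjugating by the unitary $L^2(\omega) \to L^2(\Rdb_+) : h \mapsto \omega h$ turns $M_f$ into the integral operator on $L^2(\Rdb_+)$ with kernel $\kappa(s,t) = \frac{\omega(s)}{\omega(t)}\,f(s-t)$ for $0 \le t \le s$ and $\kappa(s,t)=0$ otherwise; by Tonelli and the substitution $u = s-t$ its Hilbert--Schmidt norm satisfies $\norm{\kappa}_{HS}^2 = \int_0^\infty |f(u)|^2\,\Phi(u)\,du$, where $\Phi(u) = \int_0^\infty \big(\omega(u+t)/\omega(t)\big)^2\,dt$. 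I will show $\Phi(u) < \infty$ for each $u > 0$ and that $\Phi$ is bounded on $\mathrm{supp}\,f$; since $f$ is bounded and supported in some $[\delta,N]$ with $\delta>0$, this gives $\norm{\kappa}_{HS} < \infty$, so $M_f$ is Hilbert--Schmidt, hence compact. Writing $\eta = -\log\omega$, convexity of $\eta$ on $(0,\infty)$ gives $\eta(t+u)-\eta(t) = \int_t^{t+u}\eta'_+(r)\,dr \ge u\,\eta'_+(t)$, while the condition $\eta(t)/t^{1+\epsilon}\to\infty$ together with convexity forces $\eta'_+(t) \ge \frac{\eta(t)-\eta(2)}{t-2} \ge c\,t^{\epsilon}$ for all large $t$. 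Hence $\big(\omega(u+t)/\omega(t)\big)^2 = e^{-2(\eta(t+u)-\eta(t))} \le e^{-2 u c\,t^{\epsilon}}$ for large $t$, which is integrable in $t$; on the complementary bounded $t$-interval the integrand is bounded above, again because $\omega$ and $1/\omega$ are bounded on compact intervals. This proves $\Phi(u)<\infty$; and for $u \in [\delta,N]$, submultiplicativity gives $\omega(u+t) \le \omega(\delta+t)\,\omega(u-\delta)$, whence $\Phi(u) \le \big(\sup_{[0,N]}\omega\big)^2\,\Phi(\delta) < \infty$, so $\Phi$ is bounded on $\mathrm{supp}\,f$.

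The last ingredient is the elementary fact that if $K$ is a compact operator on a Hilbert space $H$, then $b \mapsto KbK$ is a compact map on any norm closed subalgebra of $B(H)$. Indeed, approximating $K$ in operator norm by finite rank $F = \sum_{i=1}^{r}\xi_i\otimes\zeta_i$, one computes $FbF = \sum_{i,j} \langle b\xi_i,\zeta_j\rangle\,\xi_j\otimes\zeta_i$, so $b\mapsto FbF$ maps the unit ball into a bounded subset of the fixed finite-dimensional span of $\{\xi_j\otimes\zeta_i\}$ and hence has relatively compact range; and $b\mapsto FbF$ tends to $b\mapsto KbK$ uniformly on the unit ball, since $\norm{KbK-FbF} \le \norm{b}\,\norm{K-F}\,(\norm{K}+\norm{F})$. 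Applying this with $K = M_f$, compact by the previous paragraph, shows that $b \mapsto M_f\,b\,M_f$ is compact on ${\mathcal A}(\omega)$ for every $f \in C_c^\infty((0,\infty))$; combined with the reduction in the first paragraph, ${\mathcal A}(\omega)$ is compact.

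The only non-routine step is the Hilbert--Schmidt estimate $\Phi(u) < \infty$, and this is exactly where both halves of Domar's criterion are used: convexity of $\eta$ supplies the mean value inequality $\eta(t+u)-\eta(t) \ge u\,\eta'_+(t)$, and the super-polynomial growth $\eta(t)/t^{1+\epsilon}\to\infty$ (with convexity) supplies the lower bound $\eta'_+(t) \gtrsim t^{\epsilon}$ that makes $e^{-2u\eta'_+(t)}$ integrable. Everything else — the density statement, the closedness of $S$, and the passage from compactness of $M_f$ to compactness of multiplication by $M_f$ — is soft.
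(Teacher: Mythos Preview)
Your argument is correct, and it follows a genuinely different route from the paper's.

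The paper proves compactness of one-sided multiplication $b \mapsto h*b$ on ${\mathcal A}(\omega)$ by factoring $h = f*g$ via Cohen's theorem, observing that $b*g$ lies in $L^2(\omega)$ with controlled norm, then invoking \cite[Corollary~5.6]{BRead} (a shift estimate specific to Domar-type weights) to push $\delta_{\epsilon/2}*(b*g)$ boundedly into $L^1(\omega)$, and finally appealing to the compactness of $L^1(\omega)$ established in \cite{BaDa} to extract a convergent subsequence. Thus the paper's proof is modular: Domar's criterion enters only through the two cited results, and the compactness of ${\mathcal A}(\omega)$ is deduced from that of $L^1(\omega)$.

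Your proof is instead self-contained and more quantitative: you show directly that $M_f$ is Hilbert--Schmidt on $L^2(\omega)$ for $f\in C_c^\infty((0,\infty))$, using both halves of Domar's criterion explicitly to bound $\Phi(u)=\int_0^\infty(\omega(u+t)/\omega(t))^2\,dt$, and then pass to compactness of $b\mapsto M_f b M_f$ on ${\mathcal A}(\omega)$ via the soft fact that $b\mapsto KbK$ is compact whenever $K$ is. This avoids \cite{BaDa} and \cite{BRead} entirely and yields the bonus information that the generators $M_f$ are actually Hilbert--Schmidt. The trade-off is that you only establish compactness of the two-sided map $M_{a,a}$ (which is all that is required), whereas the paper's argument gives compactness of one-sided multiplication on ${\mathcal A}(\omega)$; of course for this commutative algebra the two notions coincide by Lemma~\ref{cohwc}. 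One small remark: in your third paragraph the phrase ``on any norm closed subalgebra'' tacitly assumes $K$ lies in that subalgebra (so that $b\mapsto KbK$ maps it to itself); this is of course the case in your application.
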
  \begin{proof}   The collection of
compact operators on ${\mathcal A}$ is closed, hence it suffices to show that
$g \mapsto h * g$ is compact  on ${\mathcal A}$ for $h \in L^1(\omega)$.
Since the embedding of $L^1(\Rdb_+)$ in ${\mathcal A}$
is continuous, we may assume further that $h$ is bounded
(since simple functions are dense in $L^1(\omega)$), and $h$ has
support contained in $[\epsilon,N]$ say, for a fixed $\epsilon > 0$.
Indeed since $L^1(\omega)$ is an approximately unital
 Banach algebra,  the convolution is continuous
and $L^1(\omega) * L^1(\omega)
= L^1(\omega)$; hence we may assume that $h = f *g$
for bounded $f, g \in L^1(\omega)$ both with
support contained in $[\epsilon,N]$.  Clearly such $g \in L^2(\omega)$
since $g$ and $\omega$ are bounded, and so $a *g \in L^2(\omega)$
for $a \in {\rm Ball}({\mathcal A})$, and $\Vert a * g \Vert_{L^2(\omega)}
 \leq \Vert a \Vert_{{\mathcal A}} \Vert g \Vert_{L^2(\omega)}$.
By \cite[Corollary 5.6]{BRead}, if $\delta_{\frac{\epsilon}{2}} * a * g$
is a shift of $a * g$ to the right by $\frac{\epsilon}{2}$,  
then this is in $L^1(\omega)$, with norm there dominated by 
$\Vert a * g \Vert_{L^2(\omega)} \leq C \Vert g \Vert_{L^2(\omega)}$,
for a constant $C$.   On the other hand, if $f_2$ is a shift of 
$f$ to the left by $\frac{\epsilon}{2}$, then 
on any compact subinterval of $(0,\infty)$ the function $|f_2| \omega$ is
dominated by a constant times a left shift of $|f| \omega$,
 since $\omega$ is continuous.
Since  $f$ has compact support it follows that $f_2 \in L^1(\omega)$.    

Let $(x_n) \subset {\rm Ball}({\mathcal A})$.  Then $x_n * g \in
L^{2}(\omega)$, and $(\delta_{\frac{\epsilon}{2}} * (x_n * g))$ is a
bounded sequence
in $L^{1}(\omega)$, by the inequality involving $C$ in the last paragraph.  Since the latter algebra is compact by
\cite{BaDa}, there is a convergent subsequence of
$(f_{2} * (\delta_{\frac{\epsilon}{2}} * (x_n * g)))$ in $L^{1}(\omega)$,
and hence in ${\mathcal A}$.  However 
$f_{2} * (\delta_{\frac{\epsilon}{2}} * (x_n * g)) = f * (x_n * g) = h *
x_{n}$.   Thus multiplication by $h$ is compact on  ${\mathcal A}$ as
desired.  \end{proof}

Henceforth we take the weight $\omega$ to be a ``staircase weight", namely on
each interval $[n, n+1)$ we assume that $\omega$ is a constant $\frac{1}{a_n}$,
where $(a_n)$ is a strictly and rapidly  increasing sequence of positive integers with $a_0 = 1$.  
So long as $a_{n+m} \geq a_n a_m$ for $n, m \in \Ndb$ then   $\omega$ is a weight function,
and $L^1(\omega)$ and  ${\mathcal A}(\omega)$ are commutative Banach algebras with cai
(see e.g.\ \cite{BaDa,Dal}).  
It seems to be quite difficult to find examples of commutative Banach algebras 
(which are not reflexive in the Banach space sense), which are weakly compact 
 (hence ideals in their biduals by 
Lemma \ref{cohwc}) but not compact.  In fact this is mentioned as an open problem 
in \cite{Ulg}.  The following gives a commutative approximately unital operator algebra 
with this property.

\begin{theorem}  \label{wcnc}   Let $(\epsilon_n)$ be a decreasing null sequence, and let  $(a_n)$ be a sequence chosen with 
\begin{equation} \label{ri} a_0 = 1 \; , \; \; \; \; \; \; 
a_n \geq \max \{ \frac{2^k}{\epsilon_{n-k}} a_k \, a_{n-k} : k = 1, \cdots , n-1 \} .  \end{equation}
Then the operator algebra
${\mathcal A}(\omega)$ for the  associated staircase weight $\omega$ is not compact, but is
 weakly compact, and hence is an ideal in its bidual.   Also, ${\mathcal A}(\omega)$ is a 
commutative approximately unital radical operator algebra,
 which is not reflexive in the Banach space sense, and which is topologically singly generated.
\end{theorem}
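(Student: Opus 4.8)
The plan is to verify the four asserted properties of $\mathcal{A}(\omega)$ in turn, using the rapid growth condition (\ref{ri}) as the key quantitative input. The properties that $\mathcal{A}(\omega)$ is commutative, approximately unital, and radical are essentially built into the construction: commutativity and the existence of a cai follow from the general theory of staircase weights recalled just before the theorem, once one checks $a_{n+m} \geq a_n a_m$, which is immediate from (\ref{ri}) (take the $k = m$ term, noting $2^m/\epsilon_{n} \geq 1$ and hence $a_{n+m} \geq a_m a_n$); and radicality follows from the criterion $\lim_{t\to\infty}\omega(t)^{1/t} = 0$ recalled from \cite[Section 4.7]{Dal}, since $\omega(t)^{1/t} \approx a_n^{-1/n} \to 0$ as the $a_n$ grow super-geometrically by (\ref{ri}). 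So the real content is: (i) not compact, (ii) weakly compact, (iii) not reflexive, (iv) topologically singly generated.

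For (i), non-compactness, I would invoke Lemma \ref{dom}: a staircase weight is not right-regulated at, say, $x = 1/2$, because $\omega(x+t)/\omega(t)$ does not tend to $0$ — on each interval $[n,n+1)$ of length $1$, shifting by a small amount $x < 1$ keeps one inside the interval (or moves to the adjacent one) on a set of positive measure, so the ratio $\omega(x+t)/\omega(t)$ is bounded below by $a_n/a_{n+1}$ times a constant; one must arrange that this does not force $0$, and in fact by choosing $x$ small and using that consecutive steps only change by a bounded factor on a positive-measure set, $\limsup_{t\to\infty}\omega(x+t)/\omega(t) > 0$. (If a bare staircase weight is genuinely regulated at every $x>0$, one instead shows non-compactness directly by the same translation-of-bumps argument as in Lemma \ref{dom}, exhibiting a bounded sequence $f_n$ with $f*f_n$ weakly null but not norm-null in $L^2(\omega)$.) For (iv), topological single generation, the natural candidate generator is (a scalar multiple of) the characteristic function of a small interval $[0,\delta)$, or the element $T = I - (I+V)^{-1}$-type construction; one shows that convolution powers of an appropriately normalized bump, or the image of $t \mapsto e^{-t}\chi_{[0,\infty)}$-style functions, densely span $\mathcal{A}(\omega)$, using that $L^1(\omega)*L^1(\omega) = L^1(\omega)$ and a Stone–Weierstrass / density argument in $L^1(\omega)$ together with the continuity of the embedding $L^1(\omega) \hookrightarrow \mathcal{A}(\omega)$. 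Property (iii), non-reflexivity, follows either from (i) combined with the corollary after Theorem \ref{asp} (a reflexive weakly compact algebra would be compact in the unital-type sense; more directly $L^1(\omega)$ contains a copy of $\ell^1$ via the point masses suitably weighted, hence is not reflexive, and this passes down), or simply because $L^1(\omega)$ is an $L^1$-type space that is not reflexive and $\mathcal{A}(\omega)$ contains an isomorphic copy of a non-reflexive space.

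The heart of the theorem, and the step I expect to be the main obstacle, is (ii): weak compactness of $\mathcal{A}(\omega)$, i.e.\ that $g \mapsto h*g$ is weakly compact on $\mathcal{A}(\omega)$ for each $h \in L^1(\omega)$. By the reduction already used in the proof of Proposition \ref{dom2}, it suffices to treat $h = f * g$ with $f,g$ bounded and compactly supported in some $[\epsilon, N]$, and to control $f * x_n$ for a bounded sequence $(x_n) \subset \mathrm{Ball}(\mathcal{A})$. The strategy is to decompose according to the staircase structure: write $x_n * g$ in terms of its restrictions to the intervals $[m, m+1)$, and use that convolution by the compactly supported $f$ only couples a bounded number of consecutive intervals. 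On each fixed bounded block the relevant operators live in a finite-dimensional (hence reflexive) space, so the only obstruction to weak compactness is the "tail", and the growth condition (\ref{ri}) — precisely the factor $2^k/\epsilon_{n-k}$ — is exactly what is needed to show that the tail contributions are uniformly small in the relevant weighted norm: the $\epsilon_n \to 0$ forces the tail of any bounded sequence to cluster weakly, while the $2^k$ gives summability. Concretely, one shows that for $a \in \mathrm{Ball}(\mathcal{A})$ the shifted function $\delta_{\epsilon/2} * a * g$ lies in $L^1(\omega)$ with controlled norm (as in Proposition \ref{dom2} via \cite[Corollary 5.6]{BRead}), and then that the map into $L^1(\omega)$ obtained this way is weakly compact, using that $L^1(\omega)$ for a staircase weight with the growth (\ref{ri}) has the property that bounded sets, after the shift, are relatively weakly compact — this is where one cites or reproves the relevant fact from \cite{BaDa} about staircase weights, now in the weak rather than norm-compact regime. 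The delicate point is matching the exponent bookkeeping: one must check that the sum $\sum_k 2^{-k}$ dominates the cross-terms $a_k a_{n-k}/a_n \cdot \epsilon_{n-k}^{-1} \cdot (\text{something})$ coming from the convolution, which is guaranteed term-by-term by (\ref{ri}), and then conclude weak compactness by a diagonal/uniform-smallness argument rather than extracting a norm-convergent subsequence (which would be false, since the algebra is not compact).
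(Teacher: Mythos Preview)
Your handling of the peripheral claims is mostly fine, though two points deserve tightening.  For non-reflexivity the paper's argument is a one-liner you missed: if $\mathcal{A}(\omega)$ were reflexive it would (being approximately unital) have an identity of norm~$1$, which is impossible in a radical algebra.  For topological single generation the paper simply cites \cite[Theorem 4.7.26]{Dal} with the constant function~$1$ as generator; your sketch is vaguer than necessary.

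The real gap is in your argument for weak compactness.  You propose to mimic Proposition~\ref{dom2}: factor $h = f*g$, observe $a*g \in L^2(\omega)$ for $a \in \mathrm{Ball}(\mathcal{A})$, shift by $\epsilon/2$ to land in $L^1(\omega)$ via \cite[Corollary 5.6]{BRead}, and then invoke (weak) compactness of $L^1(\omega)$.  But that shift step uses Domar's criterion in an essential way: it is the convexity and super-linear growth of $-\log\omega$ that makes a small right shift convert $L^2(\omega)$-control into $L^1(\omega)$-control.  A staircase weight is piecewise constant, so $-\log\omega$ is not convex, and a shift by any amount less than~$1$ leaves $\omega$ unchanged on most of each step --- there is no integrability gain whatsoever.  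So the reduction to $L^1(\omega)$ simply does not go through here, and your ``cites or reproves the relevant fact from \cite{BaDa} \ldots\ now in the weak rather than norm-compact regime'' is doing all the work without any indication of how.

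The paper's route is genuinely different and stays entirely inside $B(L^2(\omega))$.  One introduces the projections $P_m$ onto functions supported on $[0,m]$ and the blocks $\Delta_m = P_{m+1}-P_m$.  The growth condition (\ref{ri}) is used exactly once, to prove the operator-norm estimate
\[
\Bigl\Vert\, u(I-P_r) - \sum_{m\geq r} \Delta_m\, u\, \Delta_m \,\Bigr\Vert \;\leq\; \epsilon_r\,\Vert u\Vert
\qquad (u \in \overline{\mathcal{A}}^{\,w*}),
\]
because $\Vert \Delta_{m+k}\, u\, \Delta_m\Vert \leq \frac{a_m a_k}{a_{m+k}}\Vert u\Vert \leq \frac{\epsilon_m}{2^k}\Vert u\Vert$ by (\ref{ri}).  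Combined with the unitary equivalence of all the diagonal blocks $\Delta_m u \Delta_m$ to $P_1 u P_1$, and a finite-range compactness lemma (left multiplication by $P_m\pi(g)$ is compact on $\mathcal{A}$), one extracts a subsequence $FH_{c_n}$ that is within $2^{1-n}$ of operators $(P_{b_n}-P_{b_{n+1}})FH_{c_n}$ with mutually orthogonal ranges; such a sequence tends weakly to~$0$ automatically.  Your cross-term bookkeeping intuition $a_k a_{n-k}/a_n \leq \epsilon_{n-k} 2^{-k}$ is exactly right, but it belongs in this block-matrix picture on $L^2(\omega)$, not in an $L^1(\omega)$ reduction.
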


 Clearly such a staircase weight
$\omega$ is not regulated at numbers in $(0,1)$, and hence  ${\mathcal A}(\omega)$  is
not compact by Lemma \ref{dom}.   To show that it is weakly compact, 
by the Eberlein-Smulian theorem it suffices to show that if $F \in {\mathcal A}$ and 
$(G_n)$ is a norm bounded sequence in ${\mathcal A}$, then 
$(F G_n)$ has a weakly convergent subsequence in $B(L^2(\omega))$.
Since the weakly compact operators are closed,
it is enough to show this in the case that  $F = \pi(f)$ and $G_n = \pi(g_n)$
for $f, g_n$  continuous functions of compact support on $\Rdb_+$ (since such functions
are dense in $L^1(\omega)$ by the usual arguments, 
and hence in ${\mathcal A}$).  Here $(G_n)$ is uniformly bounded in the 
operator norm, and so has a weak* convergent subsequence.  By passing 
to this subsequence we may assume that $G_n \to G$ weak*, and 
$F G_n \to F G$ weak*.  We will show that a subsequence $F G_{k_n} \to F G$ weakly.
     Set $H_n = G_n -G$.  We may, and will henceforth, assume that $\Vert H_n \Vert_{\rm op}
\leq 1$ for all $n$.  

Let $P_n$ denote the orthogonal projection onto the functions in
$L^2(\omega)$ supported on $[0,n]$, with $P_0 = 0$, and set $\Delta_n = P_{n+1} - P_n$. 
We have $P_n \pi(g) = P_n \pi(g) P_n$ for each $n$ and $g \in L^1((\omega)$, hence also
$P_n u = P_n u P_n$ for $u$ in ${\mathcal A}$ or in  $\bar{{\mathcal A}}^{w*}$.
 
\begin{lemma} \label{lefti}  Assume the hypotheses and notation of
Theorem {\rm \ref{wcnc}} and the discussion below it, and fix $m \in \Ndb$.
\begin{enumerate} \item [(1)]  Left multiplying  by $P_m \pi(g) = P_m \pi(g) 
P_m$ on ${\mathcal A}(\omega)$  is a compact operator on ${\mathcal A}(\omega)$ for  each  
 $g \in L^1(\omega)$.   
\item [(2)]  There exists $k_1 < k_2 < \cdots$ with 
$\Vert P_m \, F \, H_{k_n} \Vert_{\rm op} \to 0$ 
as $n \to \infty$.    
\end{enumerate}
\end{lemma}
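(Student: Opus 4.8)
The plan is to exploit the staircase structure of $\omega$: on $[0,m]$ the weight is bounded above and below by positive constants, so $L^2(\omega)$-norm and ordinary $L^2(\Rdb_+)$-norm are equivalent on $P_m L^2(\omega)$, which makes everything supported there look finite-dimensional-ish from the point of view of compactness arguments. For part (1), note $P_m \pi(g) = P_m \pi(g) P_m$ has range contained in $P_m L^2(\omega)$. The key observation is that convolution by $g \in L^1(\omega)$ followed by truncation to $[0,m]$ only sees the part of $g$ supported on $[0,m]$ (since convolution only moves mass to the right); so $P_m \pi(g) = P_m \pi(g \chi_{[0,m]})$, and $g\chi_{[0,m]} \in L^1(\Rdb_+)$. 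Then I would argue that the map $a \mapsto P_m \pi(g) a$ on $\Ball({\mathcal A})$ lands in a norm-compact subset of $B(L^2(\omega))$: given $a \in \Ball({\mathcal A})$, write $a = \pi(h_a)$ in the weak* sense with $\Vert a \Vert_{\rm op} \le 1$; the operators $P_m \pi(g) a$ are uniformly bounded and have range in $P_m L^2(\omega)$, and I need to show this family is precompact. The cleanest route is to show $P_m \pi(g)$ is itself a compact operator on $L^2(\omega)$ — indeed, for $g \in L^1$ of compact support and $P_m$ a finite-interval truncation, $P_m \pi(g) = P_m \pi(g) P_{m}$ and this is an integral (Hilbert–Schmidt-type) operator on a fixed finite interval; actually $\pi(g)$ is a Volterra-type convolution operator and $P_m \pi(g) P_{m}$ is compact on $L^2[0,m]$ by the standard argument (its kernel $g(s-t)$ on the triangle is in $L^2$ of the square). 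Then left multiplication by a fixed compact operator $K = P_m\pi(g)$ is a compact operator on $B(L^2(\omega))$ in the operator norm? — careful: left multiplication by a compact operator is NOT compact on $B(H)$ in general. So instead I would argue directly on ${\mathcal A}$: the map $a \mapsto Ka$ sends $\Ball({\mathcal A})$ into $\{ Ka : \Vert a\Vert_{\rm op}\le 1\}$, and since $K$ is compact and $\{a\}$ is bounded, any sequence $Ka_j$ has a subsequence converging in operator norm (because $a_j \to a$ WOT along a subnet by weak* compactness of the ball of $\bar{\mathcal A}^{w*}$, and $K a_j \to Ka$ in norm since $K$ compact and $a_j \to a$ WOT implies $K a_j \to Ka$ in norm — this last implication holds for compact $K$: $a_j \to a$ WOT, so $a_j \zeta \to a\zeta$ weakly for each $\zeta$, hence $K a_j \zeta \to Ka\zeta$ in norm, and boundedness plus compactness of $K$ upgrades this to norm convergence of $Ka_j$). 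Extracting a weak* convergent subsequence first (the operator ball is weak* metrizable on the separable space at hand, or use nets) gives the compactness of $a \mapsto P_m\pi(g)a$.

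For part (2), I would apply part (1) with $g = f$ (so $F = \pi(f)$) to conclude that left multiplication by $P_m F = P_m \pi(f)$ is compact on ${\mathcal A}(\omega)$. The sequence $(H_{k})$ is norm bounded in ${\mathcal A}$ (with $\Vert H_k\Vert_{\rm op} \le 1$) and $H_k \to 0$ weak* in $\bar{\mathcal A}^{w*} \subset B(L^2(\omega))$ by construction (since $G_n \to G$ weak*). Compactness of $a \mapsto P_m F a$ then forces $P_m F H_k \to 0$ in operator norm along a subsequence: extract $k_1 < k_2 < \cdots$ so that $P_m F H_{k_n}$ converges in operator norm; its weak* limit must be $P_m F \cdot 0 = 0$ by separate weak* continuity of multiplication on $\bar{\mathcal A}^{w*}$ (or just weak* continuity of $a \mapsto P_m F a$ on bounded sets), so the norm limit is $0$ as well. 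Hence $\Vert P_m F H_{k_n}\Vert_{\rm op} \to 0$.

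The main obstacle I anticipate is the first bullet — pinning down precisely \emph{why} left multiplication by the truncated convolution operator is a compact map on ${\mathcal A}(\omega)$, since "left multiplication by a compact operator" is not compact on all of $B(H)$; the fix is to use that we are restricting to the bounded set $\Ball({\mathcal A})$ and to combine weak* compactness of this ball with the fact that a compact operator turns WOT-convergence of a bounded net into norm convergence of the images. The equivalence of $L^2(\omega)$ and $L^2$ norms on $[0,m]$ (from the staircase weight being a positive constant on each unit interval) is what guarantees $P_m\pi(g)P_m$ is genuinely compact (Hilbert–Schmidt kernel on a bounded triangle), so I would make that norm-equivalence explicit at the start.
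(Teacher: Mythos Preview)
Your argument for part (1) has a genuine gap at the step you yourself flag as delicate. You correctly note that left multiplication by a compact $K$ is not compact on $B(H)$, but your fix---pass to a WOT-convergent subsequence $a_j \to a$ and claim $K a_j \to K a$ in operator norm---fails precisely at the ``upgrade''. From $a_j \to a$ WOT you get $K a_j \zeta \to K a \zeta$ in norm for each vector $\zeta$, i.e.\ SOT convergence of $K a_j$; but SOT convergence of a bounded family of compact operators does \emph{not} give norm convergence. On $\ell^2$ take $K = e_1 \otimes e_1$ (rank one) and $a_j = e_1 \otimes e_j$: then $a_j \to 0$ WOT while $K a_j = a_j$ has norm $1$ for every $j$. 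Restricting everything to $P_m L^2(\omega)$ does not help, since that space is still infinite-dimensional, and at this point your argument uses nothing about ${\mathcal A}$ beyond boundedness of the $a_j$.

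What is missing is the specific structure of ${\mathcal A}$: after reducing to $g$ bounded with compact support (so $g \in L^2(\omega)$), the operator $P_m \pi(g) a$ is determined by the single \emph{function} $P_m(ag)$, in the sense that $P_m \pi(g)\pi(h) = P_m \pi(g*h) = P_m \pi\bigl(P_m(g*h)\bigr)$ and hence $\Vert P_m \pi(g) a\Vert_{\rm op} \le \Vert P_m(ag)\Vert_{L^1(\omega)}$ for all $a \in {\mathcal A}$ by density. The paper exploits this by writing $g = g_1 * g_2$ with $g_i$ bounded, continuous, and compactly supported: then $a \mapsto P_m(a g_2)$ is bounded ${\mathcal A} \to L^1([0,m])$, the map $h \mapsto P_m(g_1 * h)$ is compact $L^1([0,m]) \to C([0,m])$ by Arzel\`a--Ascoli, and $C([0,m]) \hookrightarrow L^1(\omega)$ dominates the operator norm. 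The compactness lives in that middle function-space map, not in $K$ acting on $L^2(\omega)$. Your part (2) is essentially the paper's argument; the only slip is that $H_n = G_n - G$ need not lie in ${\mathcal A}$, so apply (1) to the sequence $(G_n) \subset {\mathcal A}$, extract a norm-convergent subsequence of $(P_m F G_{k_n})$, identify its limit as $P_m F G$ via weak* continuity, and subtract.
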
  \begin{proof}  
(1) \ Since the compact operators are closed
it suffices to prove (1) for a dense set of $g \in L^1(\omega)$, as in Proposition \ref{dom2},
and as in that proof we may assume that $g = g_1 * g_2$ for bounded
$g_1, g_2$ with support in $[\epsilon,N]$ where $0 < \epsilon < N$.  We may also assume that 
$g_1, g_2$ are continuous, by the same idea.   

For functions supported on $[0,m]$ the $L^1(\omega)$ norm
is equivalent to the  usual $L^1$ norm. We view $L^1([0,m]) \subset L^2([0,1])$ as a subspace
of $L^1(\omega) \cap L^2(\omega)$ in this way.
Suppose that $(a_n)$ is a bounded sequence
in ${\mathcal A}$.   Then $(g_2 * a_n)$ is a bounded sequence 
in $L^2(\omega)$, and so if $b_n = P_m(g_2 * a_n)$ then
$(b_n)$ is a bounded sequence in $L^1([0,m])$.    
  We then note that left multiplying by $P_m \pi(g_1)$
  may be viewed
as the operator  $T : L^1([0,m]) \to C([0,m]) : h \mapsto P_m (g_1 * h)$.
 Indeed $T$
 takes Ball$(L^1([0,m]))$ into a uniformly bounded
and equicontinuous subset of $C([0,m])$.
 By the Arzela-Ascoli theorem, $T$ is compact.
 Now $$T(b_n) = P_m (g_1 * P_m(g_2 * a_n))  = P_m (g_1 * g_2 * a_n) = P_m (g  * a_n) .$$
Since $T$ is compact and $(b_n)$ bounded, there will
be a subsequence $(P_m (g * a_{k_n}))$ that converges to a function in $C([0,m])$ in
the uniform norm.  Since the uniform norm on $C([0,m])$ dominates the 
$L^1$-norm, which is equivalent to the norm on $L^1(\omega)$, which dominates
the operator norm, we deduce that $(P_m (g  * a_{k_n}))$ converges in 
the operator norm too.    

(2) \ By (1), $(P_m F G_n)$ has a norm convergent 
subsequence, and the limit must be $P_m F G$ since  $F G_n  \to F G$ weak*. 
 The last  assertion is now obvious.   
\end{proof} 

\begin{lemma} \label{err1}  Under the hypotheses of 
Theorem {\rm \ref{wcnc}}, if $u \in \overline{{\mathcal A}(\omega)}^{w*}$ and $r \in \Ndb$,
$$\Vert u \, (I-P_r) - \sum_{m=r}^\infty \, \Delta_m \, u \, \Delta_m \Vert \leq \epsilon_r \Vert u \Vert.$$
Further, $\Delta_m \, u \, \Delta_m$ on ${\rm Ran}(\Delta_m)$ is
unitarily equivalent to $P_1  \, u \,  P_1$ on ${\rm Ran}(P_1)$, and hence
$\lim_{r \to \infty} \, \Vert u \, (I-P_r) \Vert = \Vert P_1  \, u \,  P_1 \Vert$.
All norms here are the operator norm.   \end{lemma}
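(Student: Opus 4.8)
The plan is to exploit the fact that $u$ is ``block lower triangular'' with respect to the orthogonal decomposition $L^2(\omega) = \bigoplus_{m \geq 0} {\rm Ran}(\Delta_m)$, to estimate the off-diagonal blocks of $u$ using the rapid-growth condition (\ref{ri}), and then to bound the norm of the strictly off-diagonal part of $u(I-P_r)$ by an operator-valued Schur test; the unitary-equivalence and limit statements will then drop out.

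First I would set up right translations. For an integer $j \geq 0$, let $W_j$ be the operator $(W_j\xi)(x) = \xi(x-j)$ (and $0$ for $x < j$) on $L^2(\omega)$; since $\omega$ is nonincreasing with $\omega(0) = 1$, $W_j$ is a contraction. As $W_j\xi = \delta_j * \xi$, convolution commutes with $W_j$, so $\pi(f) W_j = W_j \pi(f)$ for all $f \in L^1(\omega)$; since $\{ v : vW_j = W_j v\}$ is weak* closed and $\pi(L^1(\omega))$ is weak* dense in $\overline{{\mathcal A}(\omega)}^{w*}$, we obtain $uW_j = W_j u$ for our $u$. Next I would record the elementary identities $\Delta_k W_m = W_m \Delta_{k-m}$ (for $k \geq m$) and $\Delta_m = W_m \Delta_0 W_m'\Delta_m$ --- here $W_m'\Delta_m$ denotes the bounded operator translating a function supported on $[m, m+1)$ leftward by $m$ --- together with the scalar norms $\Vert W_m|_{{\rm Ran}(\Delta_\ell)}\Vert = a_\ell/a_{\ell+m}$ and $\Vert W_m'\Delta_m\Vert = a_m$, all immediate from the staircase form of $\omega$. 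Combining these gives the master identity $\Delta_k u \Delta_m = W_m(\Delta_{k-m} u \Delta_0)(W_m'\Delta_m)$, hence
\[ \Vert \Delta_k u \Delta_m \Vert \; \leq \; \frac{a_{k-m}}{a_k} \cdot \Vert u \Vert \cdot a_m \; = \; \frac{a_m\, a_{k-m}}{a_k}\, \Vert u \Vert , \qquad k > m \geq 1 . \]
Since $a_k \geq \frac{2^{\,k-m}}{\epsilon_m}\, a_{k-m}\, a_m$ by (\ref{ri}), the right-hand side is at most $(\epsilon_m / 2^{\,k-m})\Vert u\Vert$. Finally, $P_n u = P_n u P_n$ (from the discussion preceding the lemma) forces $\Delta_k u \Delta_m = 0$ whenever $k < m$.

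For the first displayed inequality, triangularity gives $u(I - P_r) = \sum_{m \geq r} u \Delta_m = \sum_{m \geq r} \Delta_m u \Delta_m + T$, where $T = \sum_{m \geq r} \sum_{k > m} \Delta_k u \Delta_m$ and all sums converge strongly with uniformly bounded partial sums (the middle one being block diagonal). Thus $T$ is bounded, with block matrix $(u_{km})$, $u_{km} := \Delta_k u \Delta_m$, supported on the indices $k > m \geq r$. For $x = \sum_m x_m$ with $x_m \in {\rm Ran}(\Delta_m)$, the vectors $\sum_{r \le m < k} u_{km} x_m$ lie in the pairwise orthogonal subspaces ${\rm Ran}(\Delta_k)$, so
\[ \Vert Tx \Vert^2 \; = \; \sum_k \Big\Vert \sum_{r \le m < k} u_{km} x_m \Big\Vert^2 \; \leq \; \big\Vert \, (\Vert u_{km}\Vert)_{k,m} \, \big\Vert_{B(\ell^2)}^2 \; \Vert x \Vert^2 , \]
so $\Vert T \Vert$ is bounded by the $B(\ell^2)$-norm of the nonnegative scalar matrix $(\Vert u_{km}\Vert)$. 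By the Schur test this is at most $\sqrt{R\,C}$, where $R = \sup_k \sum_{m=r}^{k-1} \Vert u_{km}\Vert$ and $C = \sup_m \sum_{k>m}\Vert u_{km}\Vert$. Inserting $\Vert u_{km}\Vert \leq (\epsilon_m/2^{\,k-m})\Vert u \Vert \leq (\epsilon_r/2^{\,k-m})\Vert u \Vert$ (using that $(\epsilon_n)$ is nonincreasing and $m \geq r$), we get $R, C \leq \epsilon_r \Vert u \Vert \sum_{\ell \geq 1} 2^{-\ell} = \epsilon_r \Vert u \Vert$, hence $\Vert T \Vert \leq \epsilon_r \Vert u \Vert$, which is the first assertion. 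I expect this Schur-type estimate to be the main point of the proof: because $(\epsilon_n)$ need not be summable, crude bounds such as $\Vert T \Vert \le \sum_{k,m}\Vert u_{km}\Vert$ or $\Vert T\Vert \le (\sum_{k,m}\Vert u_{km}\Vert^2)^{1/2}$ do not suffice, and the estimate genuinely has to be split as a column bound times a row bound.

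For the remaining statements, set $k = m$ in the master identity: $\Delta_m u \Delta_m = W_m (P_1 u P_1) W_m' \Delta_m = V (P_1 u P_1) V^{*}$, where $V := a_m\, W_m|_{{\rm Ran}(P_1)}$ is unitary from ${\rm Ran}(P_1)$ onto ${\rm Ran}(\Delta_m)$ (using $\Vert W_m|_{{\rm Ran}(P_1)}\Vert = 1/a_m$ and $W_m'\Delta_m = a_m V^{*}$); this is exactly the claimed unitary equivalence, and in particular $\Vert \Delta_m u \Delta_m \Vert = \Vert P_1 u P_1 \Vert$ for every $m$. Since $\sum_{m \geq r}\Delta_m u \Delta_m$ is block diagonal for the pairwise orthogonal subspaces ${\rm Ran}(\Delta_m)$, $m \geq r$, its norm equals $\sup_{m \geq r}\Vert \Delta_m u \Delta_m\Vert = \Vert P_1 u P_1\Vert$; combining this with the first assertion, $| \, \Vert u(I-P_r)\Vert - \Vert P_1 u P_1 \Vert \, | \leq \Vert T \Vert \leq \epsilon_r \Vert u \Vert \to 0$, which gives the final claim.
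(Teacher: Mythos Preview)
Your proof is correct. The block estimate $\Vert \Delta_k u \Delta_m\Vert \le (a_m a_{k-m}/a_k)\Vert u\Vert$ via translation invariance matches the paper's, and your Schur--test bound on $T$ is valid.

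The one genuine difference is how the off-diagonal remainder $T$ is controlled. Rather than a Schur test, the paper reindexes by the diagonal offset $\ell = k-m \ge 1$ and writes $T = \sum_{\ell\ge 1}\sum_{m\ge r}\Delta_{m+\ell}\,u\,\Delta_m$. For fixed $\ell$ the summands have pairwise orthogonal \emph{left} supports as well as pairwise orthogonal \emph{right} supports, so $\bigl\Vert \sum_{m\ge r}\Delta_{m+\ell}\,u\,\Delta_m\bigr\Vert = \sup_{m\ge r}\Vert \Delta_{m+\ell}\,u\,\Delta_m\Vert \le (\epsilon_r/2^{\ell})\Vert u\Vert$, and summing over $\ell$ gives $\Vert T\Vert \le \epsilon_r\Vert u\Vert$ directly. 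Thus your remark that ``the estimate genuinely has to be split as a column bound times a row bound'' overstates the case: the two-sided orthogonality along each subdiagonal lets one avoid the Schur machinery entirely. Your approach has the advantage of being more robust (it would still work if one only had, say, orthogonal right supports), while the paper's is shorter and makes transparent exactly which structural feature of the staircase weight is doing the work. The unitary-equivalence and limit arguments are essentially identical to the paper's.
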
  

\begin{proof}    
To establish the inequality, we recall that $P_n u = P_n u P_n$, and so 
\begin{equation} \label{uone}  u \, (I-P_r) = \sum_{m=r}^\infty \, u \Delta_m = \sum_{m=r}^\infty \, \Delta_m  \, u \, \Delta_m
+ \sum_{m=r}^\infty \, \sum_{k=1}^\infty \, \Delta_{m+k}  \, u \, \Delta_m, \end{equation}
where these sums converge weak*.   We next estimate $\Vert \Delta_{m+k}  \, u \, \Delta_m \Vert$.
If $\eta \in {\rm Ran}(\Delta_m)$ then $\eta$ is supported on 
$[m,m+1]$, and so $\Vert \eta \Vert_{L^2(\omega)} = a_m^{-1} \Vert \eta \Vert_{L^2}$.
Then $\Delta_{m+k}  \, u \eta$ is supported on 
$[m+k ,m+k+1]$, and so $\Vert \Delta_{m+k}  \, u \eta \Vert_{L^2(\omega)} = a_{m+k}^{-1} \Vert 
\Delta_{m+k}  \, u \eta \Vert_{L^2}$.  Hence 
$\Vert \Delta_{m+k}  \, u \, \Delta_m \Vert$ is $\frac{a_m}{a_{m+k}}$ times the
norm of $\Delta_{m+k}  \, u \, \Delta_m$ as an operator on $L^2(\Rdb_+)$.
However the last norm equals  the 
norm of $\Delta_{k}  \, u \, \Delta_0$ as an operator on $L^2(\Rdb_+)$, since these operators are
unitarily equivalent on their supports (since it is easy to check that
$L_m \Delta_{m+k}  \, u \, \Delta_m R_m =  \Delta_k   \, u \, \Delta_0$, and the right shift $R_m$ by $m$
is an isometry with adjoint $L_m$). 
By the norm identity we just established for $\Vert \Delta_{m+k}  \, u \, \Delta_m \Vert$
in the case $m = 0$, we deduce that 
$$\Vert \Delta_{m+k}  \, u \, \Delta_m \Vert = \frac{a_m}{a_{m+k}} \, \frac{a_k}{a_{0}} \,
\Vert \Delta_{k}  \, u \, \Delta_0 \Vert \leq \frac{a_m a_k}  {a_{m+k}} \, \Vert u \Vert.$$  
  For varying $m$, the operators $\Delta_{m+k}  \, u \, \Delta_m$ have 
mutually orthogonal  left and right supports, and so 
$$\Vert \sum_{m=r}^\infty \, \Delta_{m+k}  \, u \, \Delta_m \Vert 
= \sup_{m \geq r} \, \Vert \Delta_{m+k}  \, u \, \Delta_m \Vert
\leq \Vert u \Vert \, \sup_{m \geq r} \, \frac{a_m a_k}  {a_{m+k}} 
\leq \Vert u \Vert \, \frac{\epsilon_r}{2^k} ,$$
the last inequality following from Equation (\ref{ri}).
 Hence $$\sum_{k=1}^\infty\, \Vert \sum_{m=r}^\infty \, \Delta_{m+k}  \, u \, \Delta_m \Vert 
\leq \Vert u \Vert \, \epsilon_r .$$ By straightforward operator theory,
by the observation above about mutually orthogonal left and right 
supports, we can interchange the double summation in Equation (\ref{uone})  to obtain
$$\Vert u \, (I-P_r) - \sum_{m=r}^\infty \, \Delta_m \, u \, \Delta_m \Vert \leq 
\sum_{k=1}^\infty\, \Vert \sum_{m=r}^\infty \, \Delta_{m+k}  \, u \, \Delta_m \Vert 
\leq \Vert u \Vert \, \epsilon_r,$$ as desired.   

Define $Jg(t) = a_m g(t-m)$ if $m \leq t \leq m+1$,
and $Jg(t) = 0$ otherwise.  Then $J$ is an isometry on Ran$(P_1)$, with kernel 
Ran$(I-P_1) = {\rm Ran}(P_1)^\perp$.  Thus $J$ is a partial isometry, and its final space is
clearly Ran$(\Delta_m)$.  The reader can check that $J P_1 \pi(f) P_1 J^* =
\Delta_m \pi(f) \Delta_m$ for $f \in L^1(\omega)$, and the same will be true
with $\pi(f)$ replaced by $u$, for $u$ in ${\mathcal A}$ or 
$\bar{{\mathcal A}}^{w*}$.
From this it is clear that $\Delta_m  \, u \,  \Delta_m$ is
unitarily equivalent to $P_1  \, u \,  P_1$ as stated.  The final 
assertion of the Lemma  is obvious since we are adding elements with mutually orthogonal supports.  \end{proof}

\begin{lemma} \label{err2}  Under the hypotheses of 
Theorem {\rm \ref{wcnc}}, and in the notation above, there is a sequence of  integers $1 = b_1 < c_1 < b_2 < 
c_2 < \cdots$ such that for each $n$,  
$$ \Vert (I -  P_{b_{n+1}}) F H_{c_n} \Vert < 2^{-n} \; , \; \;  \; \; \Vert P_{b_{n}} F H_{c_n} \Vert < 2^{-n} .$$
The norms here are the operator norm.   \end{lemma}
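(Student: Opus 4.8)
The plan is to construct the two interleaved sequences $(b_n)$ and $(c_n)$ recursively, alternately invoking the two ``halves'' of the previous lemmas. The point is that $P_m F H_k$ is small for $k$ large (Lemma \ref{lefti}(2), compactness of left multiplication by $P_m \pi(g)$ up to a fixed level $m$), while $(I - P_r) F H_k$ is controlled by $\Vert P_1 F H_k P_1 \Vert$ up to an error $\epsilon_r \Vert F H_k \Vert$ (Lemma \ref{err1}), and this last quantity goes to $0$ along a subsequence because $P_1 F H_k = P_1 F H_k P_1 \to 0$ in operator norm (again Lemma \ref{lefti}(1), with $m = 1$, applied to the weak* null sequence $(H_k)$). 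So the whole construction is a back-and-forth diagonal argument.

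First I would start the recursion: set $b_1 = 1$. Given $b_n$ (and the previously chosen $c_1 < \cdots < c_{n-1}$), I want $c_n$ so large that $\Vert P_{b_n} F H_{c_n} \Vert_{\rm op} < 2^{-n}$; this is possible by Lemma \ref{lefti}(2) (or directly: $P_{b_n} F H_k = P_{b_n}(F H_k) P_{b_n} \to 0$ in operator norm since $F H_k \to 0$ weak* and left multiplication by $P_{b_n} \pi(f)$ is compact by Lemma \ref{lefti}(1), so it sends the bounded weak* null sequence $(F H_k)$ to a norm null sequence after passing to a subsequence, then one just takes $c_n$ far enough along). That takes care of the second inequality. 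Next, with $c_n$ fixed, I choose $b_{n+1} > c_n$ (hence $> b_n$) large enough that $\Vert (I - P_{b_{n+1}}) F H_{c_n} \Vert_{\rm op} < 2^{-n}$. By Lemma \ref{err1} applied to $u = F H_{c_n} \in \overline{{\mathcal A}(\omega)}^{w*}$ and $r = b_{n+1}$,
\[
\Vert F H_{c_n} (I - P_{b_{n+1}}) \Vert_{\rm op} \longrightarrow \Vert P_1 \, F H_{c_n} \, P_1 \Vert_{\rm op} \quad \text{as } b_{n+1} \to \infty,
\]
so it suffices to know that the right-hand limit is itself small. But here is the one subtlety: Lemma \ref{err1} controls $u(I - P_r)$, i.e.\ $(I - P_r)$ on the right, whereas we want $(I - P_{b_{n+1}})$ on the left. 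To fix this I would instead apply Lemma \ref{err1} to the transpose/adjoint situation, or rather note that the same argument in Lemma \ref{err1} works verbatim with left and right supports swapped: $(I - P_r) u = \sum_{m \ge r} \Delta_m u \Delta_m + \sum_{m \ge r}\sum_{k \ge 1} \Delta_m u \Delta_{m+k}$, and the estimate $\Vert \Delta_m u \Delta_{m+k}\Vert \le (a_m a_k / a_{m+k}) \Vert u\Vert \le 2^{-k}\epsilon_r \Vert u\Vert$ goes through identically, giving $\Vert (I - P_r) u - \sum_{m \ge r}\Delta_m u \Delta_m \Vert \le \epsilon_r \Vert u \Vert$ and $\lim_r \Vert (I-P_r) u\Vert = \Vert P_1 u P_1\Vert$. (Alternatively one can pass to adjoints on $L^2(\omega)$, but the swapped version of the lemma is cleaner.)

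It remains to see $\Vert P_1 F H_{c_n} P_1 \Vert_{\rm op}$ is small; this is where I would build a little extra smallness into the choice of $c_n$ in the first step. Namely, when choosing $c_n$, also require $\Vert P_1 F H_{c_n} P_1 \Vert_{\rm op} < 2^{-n-1}$; this is legitimate for the same reason as before (left multiplication by $P_1 \pi(f)$ is compact, $(F H_k)$ is bounded and weak* null, so along a subsequence $P_1 F H_k P_1 \to 0$ in norm). Then for $b_{n+1}$ large, $\Vert (I - P_{b_{n+1}}) F H_{c_n}\Vert_{\rm op}$ is within, say, $2^{-n-1}$ of $\Vert P_1 F H_{c_n} P_1\Vert_{\rm op} < 2^{-n-1}$, hence $< 2^{-n}$ as required. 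Since at each stage only finitely many constraints are imposed and each is satisfiable by taking the relevant index sufficiently large, the recursion produces $1 = b_1 < c_1 < b_2 < c_2 < \cdots$ with both stated inequalities, and the lemma follows. The main obstacle, such as it is, is purely bookkeeping: making sure the two families of estimates (``bottom corner small'' from compactness, ``tail small'' from the staircase growth condition (\ref{ri})) are invoked in the right order and with the right side — left versus right — of the projections, and recording the harmless left/right-swapped version of Lemma \ref{err1}.
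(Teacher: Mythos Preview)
Your overall recursive strategy---choose $c_n$ to make $P_{b_n} F H_{c_n}$ small, then choose $b_{n+1}$ to make the tail $(I-P_{b_{n+1}}) F H_{c_n}$ small---matches the paper's, as does the reduction of the tail estimate to $\Vert P_1 F H_{c_n} P_1\Vert$ via Lemma~\ref{err1}. The gap is in your ``swapped'' Lemma~\ref{err1}. Lower triangularity reads $P_n u = P_n u P_n$, i.e.\ $\Delta_m u \Delta_j = 0$ for $j > m$; hence the off-diagonal expansion of $(I-P_r) u$ involves $\Delta_m u \Delta_{m-k}$ with $k \geq 1$, not $\Delta_m u \Delta_{m+k}$ (the latter are all zero, so your displayed decomposition collapses). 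Even after correcting the index, the bound from (\ref{ri}) gives $\Vert\Delta_m u \Delta_{m-k}\Vert \leq 2^{-k}\epsilon_{m-k}\Vert u\Vert$, and since $m \geq r$ only forces $m-k \geq 0$, you do not obtain $\epsilon_r$; the estimate does \emph{not} go through identically. (The adjoint alternative also fails: $u^* \notin \overline{\mathcal{A}(\omega)}^{w*}$, so Lemma~\ref{err1} does not apply to it.) The conclusion $\lim_r \Vert(I-P_r)u\Vert = \Vert P_1 u P_1\Vert$ is in fact true, but it requires a genuinely different estimate for the cross term $(I-P_r) u P_r$, not a symmetric rerun of the paper's computation.

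The paper avoids this issue altogether by exploiting that $F = \pi(f)$ with $f$ continuous of \emph{compact support}. It first proves (via Arzel\`a--Ascoli, using $F P_m = P_N F P_m$ for some $N$) that $F P_m$ is compact on $L^2(\omega)$; hence $F H_{c_k} P_r = H_{c_k} F P_r$ is compact and $(I-P_s) F H_{c_k} P_r \to 0$ in norm as $s \to \infty$. One then splits $(I-P_s) F H_{c_k} = (I-P_s) F H_{c_k} P_r + (I-P_s) F H_{c_k}(I-P_r)$ and bounds the second piece by $\Vert F H_{c_k}(I-P_r)\Vert$, which is controlled by the original right-handed Lemma~\ref{err1}. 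So the paper converts the right-sided estimate into the desired left-sided one via this extra compactness ingredient, rather than by a symmetric version of Lemma~\ref{err1}.
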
  

\begin{proof}   Claim: left multiplication by $F P_m$ is a compact operator 
on $L^2(\omega)$.   Since $F$ has compact support there exists an $N$ such that
$F P_m = P_N F P_m$.   Suppose that $(g_n)$ is a bounded sequence
in $L^2(\omega)$.  Then $(P_m g)$ is a bounded sequence
in $L^1([0,m])$, and by the method in the proof of Lemma \ref{lefti} (1)
we have that $(P_N F P_m g)$ has a 
subsequence that converges to a function in $C([0,N])$ in
the uniform norm.  Since the uniform norm on $C([0,N])$ dominates a constant
multiple of the $L^2(\omega)$ norm, the subsequence converges in the
latter norm.  This proves the Claim.   It follows that $F H_n P_m = H_n F P_m$
is also compact on $L^2(\omega)$ for any fixed $n$.
Thus $(I-P_s) F H_n P_m \to 0$ in the operator norm as $s \to \infty$ for any fixed
$n, m$.

Assume that $1 = b_1 < c_1 < \cdots < b_k$ have already been chosen. 
 By Lemma \ref{lefti} (2), a subsequence of $(P_{b_{k}} F H_{n})$ converges in norm to $0$
by Lemma \ref{lefti} (2).  Thus  we may choose $c_k  > b_k$ with   $\Vert P_{b_{k}} F H_{c_k} \Vert < 
2^{-k-1}$.  Hence $\Vert P_{1} F H_{c_k} P_1 \Vert < 2^{-k-1}$.  By Lemma \ref{err1} we have
 $\lim_{r \to \infty} \, \Vert  F H_{c_k} \, (I-P_r) \Vert = \Vert P_1  \, F H_{c_k} \,  P_1 \Vert < 2^{-n-1}$.
Choose a particular $r$ with $\Vert  F H_{c_k} \, (I-P_r) \Vert < 2^{-k-1}$.
If $s > r$ we have $$\Vert (I -  P_{s}) F H_{c_k} \Vert < 2^{-k-1} +  \Vert (I -  P_{s}) F H_{c_k} P_r 
\Vert.$$
We know from the last line of 
the first paragraph of the proof that $\Vert (I -  P_{s}) F H_{c_k} P_r 
\Vert \to 0$ as $s \to \infty$, so we may choose  $b_{k+1} > c_k$ with 
$\Vert (I -  P_{b_{k+1}}) F H_{c_k} \Vert < 2^{-k}$.  This completes the inductive step.
\end{proof}

 \begin{proof} (Completion of the proof of Theorem \ref{wcnc}.)   If we choose $b_n, c_n$ as in 
Lemma \ref{err2}  we have $\Vert F H_{c_n} + (P_{b_n} - P_{b_{n+1}}) F H_{c_n} \Vert < 2^{1-n}$ for all $n$.
Thus to show that $F H_{c_n} \to 0$ weakly, which concludes our proof that 
${\mathcal A}(\omega)$ is weakly compact, it is enough that  $R_n =  (P_{b_n} - P_{b_{n+1}}) 
 F H_{c_n} \to 0$ weakly.  But this is clear since any   
bounded family  $(R_n) $ of operators on a Hilbert space $H$
with mutually orthogonal ranges converges weakly to $0$.  Indeed
if  $\sum_{n=1}^\infty \, R_n R_n^* \leq 1$ then
$R_n \to 0$  weakly.  To see this 
note that for any state $\varphi$ on $B(H)$,
we have $\sum_{n=1}^\infty \, \varphi(R_n R_n^*) \leq 1$.
Thus  by the Cauchy-Schwarz inequality, $|\varphi(R_n)|^2 \leq \varphi(R_n R_n^*)  \to 0$.

The assertion about the second dual follows from 
Lemma \ref{cohwc}.
To see that ${\mathcal A}(\omega)$ is radical, we first note
 that by mathematical induction on the case $k=n-1$ in Equation  (\ref{ri}),
we have  $a_n \geq \frac{a_1^n}{\epsilon_1^{n-1}} \,  2^{1 + 2 + 3 + \cdots + (n-1)}$.   Thus $a_n^{\frac{1}{n}} \to \infty$ and 
so $\lim_{t \to \infty} \, \omega(t)^{\frac{1}{t}} = 0$.   
Finally,  ${\mathcal A}(\omega)$ is not reflexive since if it 
were then it would have an identity of norm $1$, which cannot happen for radical algebras.  
It is topologically singly generated by the constant function $1$,  by 
\cite[Theorem 4.7.26]{Dal}.  \end{proof}

\section{The diagonal of a quotient algebra}  We remark that it is easy to see that if $A$
and  $B$ are closed subalgebras of $B(H)$ then $\Delta(A \cap B) = \Delta(A) \cap \Delta(B)$.

\begin{proposition} \label{ini} If $J$ is an inner ideal in
an operator algebra $A$
(i.e.\  $JAJ \subset J$), then $J \cap \Delta(A) = \Delta(J)$. 
\end{proposition}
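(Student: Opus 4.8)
The plan is to prove the two inclusions separately, the first being trivial and the second resting on a ``corner'' description of $J$ inside $A^{**}$. Fix a completely isometric representation of $A$ on a Hilbert space $H$; since $\Delta$ does not depend on the representation, both $\Delta(A) = A \cap A^*$ and $\Delta(J) = J \cap J^*$ may be computed here, and then $\Delta(J) = J \cap J^* \subseteq A \cap A^* = \Delta(A)$ and $\Delta(J) \subseteq J$, giving $\Delta(J) \subseteq J \cap \Delta(A)$ at once. The content is therefore the reverse inclusion: I must show that if $x \in J$ and $x^* \in A$, then $x^* \in J$.

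For this I would first record the corner description. Recall (our standing convention) that $J$ is approximately unital, so that $J^{**} = J^{\perp \perp}$ is a unital subalgebra of $A^{**}$ with identity $p$ of norm $1$; representing $A^{**}$ with its Arens product completely isometrically and weak*-homeomorphically on a Hilbert space, $p$ is a norm-one idempotent on that Hilbert space, hence an orthogonal projection. Next, the inner-ideal relation $\overline{JAJ} \subseteq J$ lifts to the bidual by the separate weak*-continuity of the Arens product to give $J^{\perp \perp} A^{**} J^{\perp \perp} \subseteq (JAJ)^{\perp \perp} \subseteq J^{\perp \perp}$; since $p$ is the identity of $J^{\perp \perp}$, this yields $p A^{**} p \subseteq J^{\perp \perp} A^{**} J^{\perp \perp} \subseteq J^{\perp \perp}$, while trivially $J^{\perp \perp} = p J^{\perp \perp} p \subseteq p A^{**} p$. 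Hence $J^{\perp \perp} = p A^{**} p$, and intersecting with $A$ (and using that $J$ is norm-closed in $A$, so $J = J^{\perp \perp} \cap A$) gives $J = p A^{**} p \cap A$.

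With this in hand the argument finishes quickly: if $x \in J \cap \Delta(A)$ then $x, x^* \in A$, and $x \in J \subseteq J^{\perp \perp}$ forces $p x = x = x p$; taking adjoints and using $p = p^*$ gives $p x^* = x^* = x^* p$, so $x^* \in p A^{**} p$, whence $x^* \in p A^{**} p \cap A = J$. Thus $x \in J \cap J^* = \Delta(J)$.

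The only real work is in the middle step, and the essential point there — the one to be careful about — is that the local identity $p$ of $J^{\perp \perp}$ is a genuine (self-adjoint) projection and that $J = pA^{**}p \cap A$; both use that $J$ has a cai, and indeed the statement fails without some such hypothesis (e.g.\ $A = M_2$ and $J = \mathbb{C} E_{12}$, where $J \cap \Delta(A) = J \neq (0) = \Delta(J)$), so I would state ``approximately unital'' explicitly. Everything else — the representation-independence of $\Delta$, the bipolar/Arens-continuity lifting of the inner-ideal relation, and the corner computation — is routine and of a kind used repeatedly earlier in the paper. One could instead try to argue directly inside the $C^*$-algebra $\Delta(A)$, showing that $J \cap \Delta(A)$ is a closed inner ideal there and hence self-adjoint, but that approach seems to require first manufacturing a cai for $J \cap \Delta(A)$, so passing to $A^{**}$ is the cleaner route.
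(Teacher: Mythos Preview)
Your proof is correct, and you are right that the result needs $J$ approximately unital --- your $M_2$/$\mathbb{C}E_{12}$ example kills the statement as literally written. The paper carries a loose convention that algebras and ideals are approximately unital, so the hypothesis is present implicitly.

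The approaches differ. The paper stays inside $\Delta(A)$: from $JAJ \subset J$ one gets $(J \cap \Delta(A))\,\Delta(A)\,(J \cap \Delta(A)) \subset J \cap \Delta(A)$, so $J \cap \Delta(A)$ is an inner ideal (the paper says ``HSA'') in the $C^*$-algebra $\Delta(A)$, and then one quotes that an HSA in a $C^*$-algebra is self-adjoint. You instead lift to $A^{**}$, identify $J = pA^{**}p \cap A$ via the support projection, and use $p = p^*$ directly. You actually anticipated the paper's line in your final paragraph and put your finger on its soft spot: the paper's own definition of HSA requires a cai, and the proof does not verify that $J \cap \Delta(A)$ has one (and a bare closed inner ideal in a $C^*$-algebra need not be self-adjoint, as your counterexample already shows inside $\Delta(A)=M_2$). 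Your route avoids this by needing a cai only for $J$, never for $J \cap \Delta(A)$, so it is the more self-contained argument; the paper's version is shorter on the page but leans on a $C^*$-fact which, when unpacked, is essentially the corner computation you wrote out.
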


\begin{proof}   It is trivial that $\Delta(J)$ is a subalgebra of $J \cap \Delta(A)$.
 Conversely, if $J A J \subset J$, then
$(J \cap \Delta(A)) \Delta(A) (J \cap \Delta(A)) \subset J \cap \Delta(A)$.  So
$J \cap \Delta(A)$  is a HSA in a C*-algebra, hence it is selfadjoint.  So
$x \in J \cap \Delta(A)$  implies that $x^* \in J \cap \Delta(A) \subset J$,
and so $x \in \Delta(J)$.
\end{proof}

We will use the fact that the diagonal of an ideal $J$ in $A$ is an
ideal in $\Delta(A)$ if it is nonzero. Indeed, $\Delta(J) = \Delta(A) \cap J$
 and $\Delta(J) \Delta(A) \subset \Delta(A) \cap (JA) \subset \Delta(A) \cap J = \Delta(J)$.
Similarly, since $J$ is a two-sided ideal, $\Delta(A) \Delta(J) \subset \Delta(J)$.
We sometimes will silently use this fact.   However it is not always true that $\Delta(A/J) \cong \Delta(A)/\Delta(J)$, if
$J$ is an approximately unital ideal in an approximately unital operator
algebra $A$.  A counterexample is given by the ideal of functions
in the disk algebra vanishing at two points on the circle, inside
the ideal of functions vanishing at one point.  Most of the rest of
this section is an attempt to understand this phenomenon, and
to give some conditions ensuring that $\Delta(A/J) \cong \Delta(A)/\Delta(J)$.

\begin{proposition} \label{Deltaquotient} Let $A$ be an approximately unital operator algebra
and let $J$ be an  ideal in $A$.
Then $\Delta(A) /\Delta(J) \subset \Delta(A/J)$.
\end{proposition}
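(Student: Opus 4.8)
The plan is to unwind the definitions and show that the obvious inclusion goes through. An element of $\Delta(A)/\Delta(J)$ is a coset $a + \Delta(J)$ with $a \in \Delta(A)$, sitting inside $A/\Delta(J)$; but since $\Delta(J) = \Delta(A) \cap J \subset J$, there is a canonical (contractive) map $A/\Delta(J) \to A/J$, and the image of $a + \Delta(J)$ is $a + J$. So really the content is: if $a \in \Delta(A)$, then $a + J$ lies in $\Delta(A/J)$, i.e.\ the image of $\Delta(A)$ under the quotient map $q : A \to A/J$ lands in $\Delta(A/J)$; moreover the induced map $\Delta(A)/\Delta(J) \to \Delta(A/J)$ is injective, so that the containment in the statement makes literal sense.

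First I would recall that $A/J$ is (completely isometrically) an operator algebra (cited in the excerpt, \cite[Proposition 2.3.4]{BLM}), so $\Delta(A/J) = (A/J) \cap (A/J)^*$ is a well-defined $C^*$-algebra. Next, the key step: fix $a \in \Delta(A)$, so $a = a^{**}$ for the involution on $C^*_{\mathrm e}(A)$ (or any $C^*$-algebra generated by $A$); I must produce $b \in A$ with $q(b) = q(a)^*$ in $A/J$, i.e.\ with $b - a^* \in J$ after identifying $A/J$ inside its enveloping $C^*$-algebra appropriately. The cleanest route is to represent $A/J$ completely isometrically on some Hilbert space; then $q(a) \in A/J$ and I want to show its adjoint (as a Hilbert space operator) is again in $A/J$. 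But the honest argument uses that $\Delta(A)$ acts on $A/J$: since $J$ is an ideal, $\Delta(J) = \Delta(A) \cap J$ is an ideal in $\Delta(A)$ (noted just before the proposition), so $\Delta(A)/\Delta(J)$ is a $C^*$-algebra, and the map $\Delta(A)/\Delta(J) \to A/J$, $a + \Delta(J) \mapsto a + J$, is a well-defined injective algebra homomorphism; being an injective $*$-homomorphism from a $C^*$-algebra into an operator algebra, its range consists of elements whose adjoints (in the target operator algebra's enveloping $C^*$-algebra) lie back in the range — so the range sits inside $\Delta(A/J)$.

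The step I expect to be the main obstacle is the injectivity of $\Delta(A)/\Delta(J) \to A/J$, equivalently that $\Delta(A) \cap J = \Delta(J)$ — but this is exactly the identity $\Delta(J) = \Delta(A) \cap J$ already recorded in the excerpt (valid since $J$ is an ideal, hence an inner ideal, by Proposition \ref{ini}). With that in hand, $\ker(\Delta(A) \to A/J) = \Delta(A) \cap J = \Delta(J)$, so the induced map on $\Delta(A)/\Delta(J)$ is injective, and as an injective $*$-homomorphism of $C^*$-algebras it is isometric; its image is a $C^*$-subalgebra of $A/J$, hence a self-adjoint subalgebra, hence contained in $(A/J) \cap (A/J)^* = \Delta(A/J)$. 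This gives $\Delta(A)/\Delta(J) \subset \Delta(A/J)$ as claimed. The remaining routine points — that $q(\Delta(A))$ is closed (it is the isometric image of the complete $C^*$-algebra $\Delta(A)/\Delta(J)$) and that the approximate identity hypothesis is used only to make sense of $A/J$ and of $\Delta(A)$ acting nondegenerately where needed — I would dispatch in a sentence each.
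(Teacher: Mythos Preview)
Your overall strategy matches the paper's: restrict the quotient map $q: A \to A/J$ to $\Delta(A)$, identify its kernel as $\Delta(A)\cap J = \Delta(J)$, and deduce an injective (hence isometric) map $\Delta(A)/\Delta(J) \hookrightarrow \Delta(A/J)$. But your write-up has a gap at the one nontrivial step.

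You assert that the induced map $\Delta(A)/\Delta(J)\to A/J$ is ``an injective $*$-homomorphism'' and then use this to conclude that its image is self-adjoint, hence contained in $\Delta(A/J)$. This is circular: saying the image is self-adjoint in (a $C^*$-cover of) $A/J$ is \emph{exactly} the statement that the map respects the involution, which is exactly the statement that the image lies in $\Delta(A/J)$. Nothing you have written establishes this. The source $\Delta(A)/\Delta(J)$ carries a $*$, but $A/J$ does not, and you have not explained why $q(a^*)$ equals the Hilbert-space adjoint of $q(a)$ after representing $A/J$ completely isometrically on some $H$.

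The paper closes this gap by observing that $u' = q|_{\Delta(A)} : \Delta(A)\to A/J$ is a \emph{contractive} homomorphism from a $C^*$-algebra into an operator algebra, and then invoking \cite[2.1.2]{BLM}: any such map automatically lands in the diagonal $\Delta(A/J)$ (equivalently, is a $*$-homomorphism). You do note contractivity in passing, but you never connect it to the $*$-preservation; that connection is the whole point. Once you insert this step, the rest of your argument---kernel identification via Proposition~\ref{ini}, injectivity, and isometry of injective $*$-homomorphisms between $C^*$-algebras---goes through and coincides with the paper's proof.
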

\begin{proof} Let $u : A \to A/J$ be the canonical complete quotient map defined as $u(x)= x + J$. The restriction of $u$ to $\Delta(A) \subset A$, $u'$, is a complete contraction.
Since $u'$ is a contractive homomorphism, it maps into $\Delta(A/J)$ by 2.1.2 of
\cite{BLM}. Hence, we have a completely contractive map $u'=u|_{\Delta(A)}: \Delta(A) \to \Delta(A/J)$, where $\Ker{u'}= \Delta(A) \cap J = \Delta(J)$.
By the fact mentioned above the proposition,
$\Delta(J)$ is an approximately unital ideal in $\Delta(A)$, and we deduce that 
$\Delta(A)/\Delta(J) \subset \Delta(A/J)$ completely isometrically.
\end{proof}

\begin{lemma} \label{delex}  Let  $J$ be an approximately unital ideal
with positive cai in an approximately unital operator algebra $A$.  Then $\Delta(A/J)
 \cong \Delta(A)/\Delta(J)$ canonically
 iff every positive element of $A/J$ lifts to an element $b \in A$
such that $b \Delta(J) \subset \Delta(J)$.
\end{lemma}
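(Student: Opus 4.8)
The plan is first to recast the equivalence, and then to handle the two implications. By Proposition~\ref{Deltaquotient}, the restriction of the quotient homomorphism $u\colon A\to A/J$ to $\Delta(A)$ carries $\Delta(A)$ into $\Delta(A/J)$, with kernel $\Delta(J)=\Delta(A)\cap J$ (Proposition~\ref{ini}), and induces a complete isometry $\Delta(A)/\Delta(J)\hookrightarrow\Delta(A/J)$. Since a contractive homomorphism between $C^*$-algebras is automatically $*$-preserving, $u|_{\Delta(A)}$ is a $*$-homomorphism, so its range $u(\Delta(A))$ is a $C^*$-subalgebra of $\Delta(A/J)$, completely isometrically isomorphic to $\Delta(A)/\Delta(J)$. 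Hence ``$\Delta(A/J)\cong\Delta(A)/\Delta(J)$ canonically'' says exactly that $u(\Delta(A))=\Delta(A/J)$; and as a $C^*$-subalgebra equals the whole algebra as soon as it contains every positive element, this is equivalent to: every positive $\bar a\in\Delta(A/J)$ lies in $u(\Delta(A))$. Recalling that the positive elements of $A/J$ are precisely those of $\Delta(A/J)$, it therefore suffices to fix a positive $\bar a\in\Delta(A/J)$ and prove that $\bar a\in u(\Delta(A))$ if and only if $\bar a$ admits a lift $b\in A$ with $b\Delta(J)\subset\Delta(J)$.

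One direction is immediate: if $\bar a=u(s)$ with $s\in\Delta(A)$, then $b:=s$ works because $\Delta(J)$ is an ideal of $\Delta(A)$. For the converse, suppose $b\in A$ with $u(b)=\bar a$ and $b\Delta(J)\subset\Delta(J)$. First I would extract some structure from the hypothesis that $J$ has a \emph{positive} cai $(e_t)$. Each $e_t$, being a positive element of $J$, lies in $\Delta(J)$; hence $(e_t)$ is a cai for $\Delta(J)$, the support projection $p$ of $J$ (which is central in $A^{**}$, $J$ being an ideal) is also the support projection of $\Delta(J)$, and $J=\overline{\Delta(J)A\Delta(J)}$. From $e_t\in\Delta(J)$ we get $be_t\in\Delta(J)$, whence $e_tb^*=(be_t)^*\in\Delta(J)$; and for $j\in\Delta(J)$ we have $(je_t)b^*=j(e_tb^*)\in\Delta(J)\Delta(J)\subset\Delta(J)$, while $je_t\to j$ in norm, so $jb^*\in\Delta(J)$. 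Thus $\Delta(J)b^*\subset\Delta(J)$ too, i.e.\ $b$ and $b^*$ both normalize $\Delta(J)$ from the appropriate side. Writing $b=pb+(1-p)b$, the element $b':=pb$ is the weak* limit of $be_t\in\Delta(J)$, so $b'\in\Delta(J)^{\perp\perp}$, and $\tfrac12(b+b^*)=b-i\,\mathrm{Im}(b')$ is a self-adjoint element of $A^{**}$ that agrees with $b$ modulo $J^{\perp\perp}$; this is a self-adjoint lift of $\bar a$, but it lives in $A^{**}$ rather than $A$, the only obstruction being that $\mathrm{Im}(b')$ is so far known only to lie in $\Delta(J)^{\perp\perp}$.

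The remaining task, which I expect to be the crux, is to descend from this lift in $A^{**}$ to one in $\Delta(A)$: equivalently, to produce $j\in J$ with $(b+j)^*\in A$, i.e.\ to show $b\in\Delta(A)+J$. Here I would use the hypothesis in full strength, not merely the single lift $b$: applied to the positive element $\bar a^{1/2}$ it yields a lift $c\in A$ of $\bar a^{1/2}$ with $c\Delta(J)\subset\Delta(J)$, hence also (by the argument above) $\Delta(J)c^*\subset\Delta(J)$ and $ce_t,\,e_tc^*\in\Delta(J)$; likewise it supplies normalizing lifts of $f(\bar a)$ for positive continuous $f$, and of a cai of $\Delta(A/J)$. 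Combining these with $J=\overline{\Delta(J)A\Delta(J)}$ and the two-sided normalization of $\Delta(J)$ by $b,b^*,c,c^*$, I would run a norm-approximation argument inside $A$: the elements $(1-e_t)b(1-e_t)$ and $c(1-e_t)c$ are all lifts of $\bar a$ lying in $A$, and the extra normalizing information should force a subnet (or suitable convex combinations) to be norm-Cauchy with a limit whose adjoint again lies in $A$, giving the desired $s\in\Delta(A)$ with $u(s)=\bar a$. As $\bar a$ was an arbitrary positive element of $\Delta(A/J)$, the reduction in the first paragraph then yields $\Delta(A/J)\cong\Delta(A)/\Delta(J)$ canonically. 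The delicate point throughout is the passage from weak* to norm convergence, i.e.\ from $A^{**}$ back down to $A$, and this is exactly where the positivity of $\bar a$ and of the cai of $J$ are needed.
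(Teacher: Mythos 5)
Your reduction and the easy direction are fine, and in the hard direction you assemble the same ingredients as the paper: decompose $b = bp + bp^{\perp}$ with $p$ the support projection of $J$, note $be_t \in \Delta(J)$ gives $bp \in \Delta(J)^{\perp\perp}$ (here the positivity of the cai is what guarantees $e_t \to p$ weak*), and use that the canonical embedding $A/J \to A^{**}p^{\perp}$ carries the positive element $\bar a$ to the self-adjoint element $bp^{\perp} \in \Delta(A^{**})$. But your final paragraph, which you yourself flag as ``the crux'', is a genuine gap: it is not a proof but a plan (lifting $\bar a^{1/2}$ and $f(\bar a)$, forming $(1-e_t)b(1-e_t)$ and $c(1-e_t)c$, hoping a subnet or convex combinations become norm-Cauchy with limit whose adjoint lies in $A$). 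Nothing in what you have established forces such norm convergence, and no argument is given for it, so the proof does not close.

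The point you are missing is that no descent from $A^{**}$ to $A$ is needed at all: the element $b$ you already have lies in $\Delta(A)$. Indeed from your own computations $b^* = (bp)^* + bp^{\perp}$, and both summands lie in $A^{**}$ (the first because $\Delta(J)^{\perp\perp}$ is a von Neumann algebra inside $A^{**}$, hence self-adjoint; the second because $bp^{\perp} \in \Delta(A^{**})$). Thus $b \in \Delta(A^{**})$. Now identify $A^{**}$ with $A^{\perp\perp}$ inside $B^{**}$ for a $C^*$-algebra $B$ containing $A$: since $b^* \in B$ and $b^* \in A^{\perp\perp}$, the standard fact $A^{\perp\perp} \cap B = A$ (a norm-closed subspace is weakly closed) gives $b^* \in A$, so $b \in A \cap A^* = \Delta(A)$, i.e.\ $\Delta(A^{**}) \cap A = \Delta(A)$. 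Hence $\bar a = u(b) \in u(\Delta(A))$, and your reduction in the first paragraph finishes the lemma. This is exactly the paper's argument; in particular the auxiliary facts you prove ($\Delta(J)b^* \subset \Delta(J)$, the square-root lifts, etc.) are not needed.
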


\begin{proof}  One direction is obvious since as we said
earlier, $\Delta(J)$ is an ideal in $\Delta(A)$.

For the other direction, let $p$ be the support projection of
$J$.  The element $b$ in the statement satisfies $b = bp + b p^\perp$.
Now the canonical isometric homomorphism $A/J \to A^{**} p^\perp \subset  
A^{**}$ takes the diagonal of $A/J$ into $\Delta(A^{**})$.
Thus $b p^\perp \in \Delta(A^{**})$.  If $(e_t)$ is a cai for
$\Delta(J)$ then $b e_t \in \Delta(J)$, and in the limit we also have $bp \in
\Delta(J)^{\perp \perp} \subset \Delta(A^{**})$ (the latter
since $\Delta(J) \subset \Delta(A^{**})$).
  So $b \in \Delta(A^{**}) \cap A = \Delta(A)$.  From this
the result is evident.  \end{proof}

For any operator algebra $A$, the diagonal $\Delta(A)$ acts nondegenerately on $A$ if and only if $A$ has a positive cai, and if and only if $1_{\Delta(A)^{\perp \perp}}=1_{A^{**}}$. The latter is equivalent to $1_{A^{**}} \in \Delta(A)^{\perp \perp}$. Hence, we may use the statements `$\Delta(A)$ acts nondegenerately on $A$' and `$A$ has a positive cai' interchangeably.

\medskip

{\bf Remark.}  
If $\Delta(A)$ acts nondegenerately on $A$, then this does not imply that 
$\Delta(J)$ acts nondegenerately on $J$ if $J$ is an ideal of $A$.
 To see this, take any approximately unital operator algebra $J$ such that $\Delta(J)$ does not act nondegenerately on $J$. Then, $J$ is an ideal in $A = M(J)$, and $\Delta(A)$ acts nondegenerately on $A$. However, if $\Delta(A)$ acts nondegenerately on $A$, then  $\Delta(A/J)$ acts nondegenerately on $A/J$.  Indeed,
 it is fairly evident by e.g.\ 2.1.2 in \cite{BLM} that if $J$ is an ideal in an operator algebra $A$,
and if $A$ has a positive cai, then $A/J$ has a positive cai.

\begin{proposition} \label{a2} Let $A$ be an operator algebra.  If $J$ is a left
ideal in $A$ with a
selfadjoint right cai, then 
$\Delta(J) = \Delta(A) \cap J$ is a left ideal in $\Delta(A)$,
$\Delta(A) + J$ is closed, and $(J \cap \Delta(A))^{\perp \perp}
= J^{\perp \perp}  \cap \Delta(A)^{\perp \perp}$.
\end{proposition}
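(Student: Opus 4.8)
The plan is to handle the three assertions in turn, noting at the outset that the last is a formal consequence of the second. Indeed, once $\Delta(A) + J$ is known to be norm closed, the equality $(J \cap \Delta(A))^{\perp\perp} = J^{\perp\perp} \cap \Delta(A)^{\perp\perp}$ follows at once from the Banach space fact invoked in Corollary~\ref{innp}(2) (see \cite[Appendix A.3, A.5]{BZ}): for closed subspaces $E, F$ of a Banach space with $E + F$ closed one has $(E \cap F)^{\perp\perp} = E^{\perp\perp} \cap F^{\perp\perp}$; here one takes $E = \Delta(A)$ and $F = J$, inside $A$.

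Next I would prove that $\Delta(A) + J$ is closed. Let $(e_t)$ be the given selfadjoint right cai of $J$. The key point is that $a e_t \in \Delta(A) \cap J$ for every $a \in \Delta(A)$: on the one hand $a e_t \in A J \subseteq J$ since $J$ is a left ideal, while on the other $(a e_t)^* = e_t a^* \in A$ because $e_t$ is selfadjoint and $e_t, a^* \in A$, so $a e_t \in A \cap A^* = \Delta(A)$. Consequently, for any $r \in J$,
\[ \Vert a - a e_t \Vert = \Vert (a - r)(1 - e_t) + (r - r e_t) \Vert \leq 2\,\Vert a - r \Vert + \Vert r - r e_t \Vert , \]
and $\Vert r - r e_t \Vert \to 0$ as $(e_t)$ is a right cai for $J$. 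Passing to the limit and then taking the infimum over $r \in J$ yields $\mathrm{dist}(a, \Delta(A) \cap J) \leq 2\,\mathrm{dist}(a, J)$ for all $a \in \Delta(A)$; since trivially $\mathrm{dist}(a, J) \leq \mathrm{dist}(a, \Delta(A) \cap J)$, the canonical map $\Delta(A)/(\Delta(A) \cap J) \to A/J$ is bounded and bounded below. Its domain being complete, its range --- the image of $\Delta(A)$ in $A/J$ --- is closed, and pulling this back by the continuous quotient map $A \to A/J$ shows $\Delta(A) + J$ is closed. This is the natural analogue here of the modification of \cite[Proposition 2.4]{Dixon} carried out in the proof of Corollary~\ref{innp}(3).

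For the identification $\Delta(J) = \Delta(A) \cap J$, the inclusion $\subseteq$ is trivial, and the reverse I would obtain along the lines of Proposition~\ref{ini}. A left ideal satisfies $JAJ \subseteq JJ \subseteq J$, so $(\Delta(A) \cap J)\,\Delta(A)\,(\Delta(A) \cap J) \subseteq JAJ \cap \Delta(A) \subseteq \Delta(A) \cap J$; thus $\Delta(A) \cap J$ is an inner ideal of the $C^*$-algebra $\Delta(A)$, and if it is approximately unital it is therefore a HSA in $\Delta(A)$, hence a hereditary $C^*$-subalgebra, hence selfadjoint. Selfadjointness of $\Delta(A) \cap J$ gives $x \in \Delta(A) \cap J \Rightarrow x^* \in \Delta(A) \cap J \subseteq J$, i.e.\ $x \in \Delta(J)$. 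Granting the identification, $\Delta(J)$ is then a left ideal in $\Delta(A)$, since $\Delta(A)\,\Delta(J) \subseteq (AJ) \cap (\Delta(A)\Delta(A)) \subseteq J \cap \Delta(A) = \Delta(J)$.

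The step I expect to be the main obstacle --- and the one genuinely using that the cai is \emph{selfadjoint} and is a \emph{right} cai of the \emph{left} ideal $J$ --- is showing that $\Delta(A) \cap J$ is approximately unital (equivalently, that it already coincides with $\Delta(J)$). The selfadjoint right cai $(e_t)$ does lie in $\Delta(J) = J \cap J^*$ and is readily checked to be a two-sided cai for the $C^*$-algebra $\Delta(J)$ (right cai from $J$, and $e_t d = (d^* e_t)^* \to d$ for $d \in \Delta(J)$ since $d^* \in J$); the difficulty is to promote this to a cai for all of $\Delta(A) \cap J$, which a priori might be strictly larger. I would attack this via the support projection $p$ of $J$ in $A^{**}$ --- a selfadjoint idempotent with $J^{\perp\perp} = A^{**} p$ --- where each $x \in \Delta(A) \cap J$ satisfies $x = xp$, the remaining task being to upgrade this to $x^* \in J$. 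The other two assertions are, by comparison, routine.
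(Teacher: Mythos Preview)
Your treatments of the second and third assertions are essentially the paper's own. The paper too isolates the observation that $a e_t \in \Delta(A) \cap J$ for every $a \in \Delta(A)$ (the selfadjointness of $e_t$ is exactly what makes $(a e_t)^* = e_t a^* \in A$), and says this is what is needed to run the argument of \cite[Proposition~2.4]{Dixon} as in Corollary~\ref{innp}(3); for the bidual identity it appeals to the same Banach-space fact from \cite{BZ}. Your write-up is more explicit (and carries a harmless constant $2$ in the bounded-below estimate), but the route is identical.

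For the first assertion the paper actually does \emph{less} than you: it simply declares the equality $\Delta(J) = \Delta(A) \cap J$ evident from Proposition~\ref{ini} (a left ideal being an inner ideal), and does not pause over the point you flag --- why the closed inner ideal $\Delta(A) \cap J$ of the $C^*$-algebra $\Delta(A)$ must be selfadjoint. Your caution here is warranted, and your proposed attack via the support projection $p$ cannot close the gap as written: from $x = xp$ one only obtains $x^* = p x^*$, not $x^* p = x^*$, so nothing forces $x^* \in A^{**}p = J^{\perp\perp}$. In fact the equality can fail under the stated hypotheses: with $A = M_2$ and $J = M_2 E_{11}$ (a left ideal with selfadjoint right identity $E_{11}$) one has $\Delta(A) \cap J = J$ while $\Delta(J) = J \cap J^* = \Cdb\,E_{11}$; note also that then $\Delta(J)$ is \emph{not} a left ideal of $\Delta(A)$, so your final line, which presupposes the equality, does not rescue that part either. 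Thus the obstacle you isolate is genuine and is not handled by the paper's bare citation of Proposition~\ref{ini}. (In the paper's one downstream application, Proposition~\ref{Deltabidual}, the ideal $J$ is two-sided with a positive two-sided cai; there $e_t x \to x$ gives $x^* e_t \to x^*$ with $x^* e_t \in AJ \subset J$, and the difficulty evaporates.)
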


\begin{proof}  The first statement is evident 
from Proposition \ref{ini}.
 Write $(e_\lambda)$ for the selfadjoint right cai of $J$.
If $r \in \Delta(A)$ then $r e_\lambda \in \Delta(A)
\cap J$.  This is what is needed to make
the idea in the proof of \cite[Proposition 2.4]{Dixon}
work, as in the proof of 
Corollary \ref{innp}, giving that $\Delta(A) + J$ is closed,
By the proof of \cite[Lemma 5.29 and Appendix A.1.5]{BZ},
$(\Delta(A) \cap J)^{\perp \perp}= (\Delta(A)^{\perp} + J^{\perp})^{\perp}
= \Delta(A)^{\perp \perp} \cap J^{\perp \perp}$.
\end{proof} 

\begin{proposition} \label{Deltabidual} Let $A$ be an approximately unital operator algebra such that $\Delta(A^{**})=\Delta(A)^{**}$. If $J$ is an ideal in $A$ that contains a positive cai, then $\Delta(J^{\perp \perp})=\Delta(J)^{\perp \perp}$ and $\Delta((A/J)^{**})=(\Delta(A)/\Delta(J))^{**}$.
\end{proposition}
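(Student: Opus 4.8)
The plan is to prove the two claimed identities in order, using the hypothesis $\Delta(A^{**}) = \Delta(A)^{**}$, the positivity of the cai of $J$, and the results already established about the behaviour of diagonals under passing to subideals and quotients. First I would establish $\Delta(J^{\perp\perp}) = \Delta(J)^{\perp\perp}$. Let $p$ be the support projection of $J$ in $A^{**}$; since $J$ is an ideal in $A$ with a positive cai, $p$ is a central open projection in $A^{**}$ (it is the support projection of the $C^*$-ideal generated by $J$ as well, but more simply: the weak* limit of the positive cai is a positive idempotent, hence a projection, and it is central in $A^{**}$ because $J$ is two-sided). Then $J^{\perp\perp} = p A^{**} = A^{**} p$. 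Since $p$ is a projection in $\Delta(A^{**})$, we get $\Delta(J^{\perp\perp}) = \Delta(p A^{**} p) = p \Delta(A^{**}) p = p \Delta(A^{**})$. On the other hand $\Delta(J) = \Delta(A) \cap J$ is an approximately unital ideal in $\Delta(A)$ (by the discussion before Proposition \ref{Deltaquotient}), and its positive cai converges weak* to the support projection of $\Delta(J)$ in $\Delta(A)^{**}$; this support projection is exactly $p$, since a cai for $\Delta(J)$ is in particular a cai for $J$ (and so has weak* limit $p$), and it lies in $\Delta(A)^{**} = \Delta(A^{**})$. Hence $\Delta(J)^{\perp\perp} = p \Delta(A)^{**} = p \Delta(A^{**}) = \Delta(J^{\perp\perp})$, using the hypothesis in the middle equality.

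For the second identity, the plan is to identify $(A/J)^{**}$ with $A^{**} p^{\perp}$ (via the standard fact $(A/J)^{**} \cong A^{**}/J^{\perp\perp} \cong A^{**} p^{\perp}$, valid since $p$ is central), and similarly $(\Delta(A)/\Delta(J))^{**} \cong \Delta(A)^{**}/\Delta(J)^{\perp\perp} \cong \Delta(A)^{**} p^{\perp}$, using the first part and again that $p$ is central. Then $\Delta((A/J)^{**}) = \Delta(A^{**} p^{\perp}) = \Delta(A^{**} p^{\perp} A^{**})$, and since $p^{\perp}$ is a projection in $\Delta(A^{**})$, this equals $p^{\perp} \Delta(A^{**}) p^{\perp} = p^{\perp} \Delta(A^{**}) = \Delta(A^{**}) p^{\perp}$. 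Combining with $\Delta(A^{**}) = \Delta(A)^{**}$ and the computation $(\Delta(A)/\Delta(J))^{**} \cong \Delta(A)^{**} p^{\perp}$ gives the result; one should check the identifications are the natural (canonical) ones, which amounts to chasing the canonical maps $A \to A/J \to (A/J)^{**}$ and $\Delta(A) \to \Delta(A)/\Delta(J) \to (\Delta(A)/\Delta(J))^{**}$ through the above isomorphisms and verifying compatibility — this is routine given that all maps in sight are restrictions of the single weak* continuous homomorphism $A^{**} \to A^{**} p^{\perp}$.

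The main obstacle I anticipate is not any single hard estimate but rather making sure that the support projection $p$ of $J$ really does lie in $\Delta(A^{**})$ and is central, and that it simultaneously serves as the support projection of $\Delta(J)$ in $\Delta(A)^{**}$. The positivity of the cai is exactly what is needed here: a positive cai for $J$ has a weak* cluster point which is a positive contractive idempotent, hence a (self-adjoint) projection, so $p = p^* \in \Delta(A^{**}) = \Delta(A)^{**}$; and the same net, viewed inside $\Delta(A) \subset \Delta(A^{**})$, is a cai for $\Delta(J)$ with the same weak* limit $p$, so $p$ is the support projection of $\Delta(J)$ too. Centrality of $p$ follows since $J$ is two-sided: $J^{\perp\perp}$ is a weak* closed two-sided ideal in $A^{**}$, hence of the form $A^{**} p$ with $p$ central. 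Once this is in hand, both identities reduce to the elementary fact that for a projection $q$ in $\Delta(A^{**})$ one has $\Delta(q A^{**} q) = q\, \Delta(A^{**})\, q$, applied with $q = p$ and $q = p^{\perp}$. I would also note the parenthetical remark after Theorem \ref{COTau}: since the ambient facts (the structure of $J^{\perp\perp}$, separate weak* continuity of the Arens product, $(A/J)^{**} \cong A^{**}/J^{\perp\perp}$) are purely Banach-algebraic, the same argument gives the statement for Arens regular Banach algebras with the stated diagonal hypothesis.
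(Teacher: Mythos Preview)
Your argument is correct and follows essentially the same route as the paper: identify the support projection $p$ of $J$, compute $\Delta(J^{\perp\perp})$ and $\Delta(J)^{\perp\perp}$ as $p\,\Delta(A^{**})$ using the hypothesis $\Delta(A^{**})=\Delta(A)^{**}$, and then run the same chain $\Delta((A/J)^{**})=\Delta(A^{**}(1-p))=\Delta(A^{**})(1-p)=\Delta(A)^{**}(1-p)=(\Delta(A)/\Delta(J))^{**}$. The only cosmetic difference is that for the first identity the paper invokes Proposition~\ref{a2} (giving $\Delta(J)^{\perp\perp}=\Delta(A)^{\perp\perp}\cap J^{\perp\perp}$) together with Proposition~\ref{ini}, whereas you argue directly that the positive cai of $J$ lies in $\Delta(J)$ and hence $p$ is simultaneously the support projection of $\Delta(J)$; both arguments are short and amount to the same computation.
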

\begin{proof} By Proposition \ref{a2}, $\Delta(J)^{\perp \perp} =
\Delta(A)^{\perp \perp} \cap J^{\perp \perp}$. Clearly  
$\Delta(A^{**}) \cap J^{\perp \perp} = \Delta(J^{\perp \perp})$ by Proposition
\ref{ini}, and so $\Delta(J^{\perp \perp})=\Delta(J)^{\perp \perp}$.
 Let $p \in A^{**}$ be the support projection of $J$. Then,
\[ \Delta((A/J)^{**}) = \Delta(A^{**}(1-p))= \Delta(A^{**})(1-p) = \Delta(A)^{**}(1-p) = (\Delta(A)/\Delta(J))^{**}, \]
where we have used Remark 2.10 (ii) in \cite{ABS}.  \end{proof}

\begin{corollary} \label{usi}  Let $A$ be an operator algebra with a positive cai that
is a HSA in its
bidual. If $J$ is an ideal in $A$ that possesses a positive cai,
then $\Delta(A/J) = \Delta(A)/\Delta(J)$.
\end{corollary}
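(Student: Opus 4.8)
The plan is to deduce this by combining Theorem \ref{bvol}, Proposition \ref{Deltabidual}, and Proposition \ref{Deltaquotient}, so that the assertion reduces to a biannihilator identity inside $(A/J)^{**}$. First I would check that Proposition \ref{Deltabidual} is applicable to $A$: since $A$ has a positive cai, $\Delta(A)$ acts nondegenerately on $A$, and since $A$ is moreover a HSA in its bidual, Theorem \ref{bvol} gives $\Delta(A^{**}) = \Delta(A)^{**}$, which is exactly the hypothesis of Proposition \ref{Deltabidual}. Applying that proposition to the ideal $J$, which possesses a positive cai, produces $\Delta((A/J)^{**}) = (\Delta(A)/\Delta(J))^{**}$. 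Using Proposition \ref{Deltaquotient} to view $\Delta(A)/\Delta(J)$ as a closed subspace of $A/J$, the right-hand side here is the weak* closure $(\Delta(A)/\Delta(J))^{\perp\perp}$ taken inside $(A/J)^{**}$.

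Next I would apply Theorem \ref{bvol} a second time, with $B = A/J$ in place of $A$. The quotient $B$ has a cai, since quotients of operator algebras with a cai do, and $B$ is weakly compact by Lemma \ref{hsher} because $A$ is weakly compact by Lemma \ref{cohwc}; hence by Lemma \ref{cohwc} again $B$ is a HSA in $B^{**}$. Furthermore $B = A/J$ has a positive cai by the Remark following Lemma \ref{delex}, so $\Delta(B)$ acts nondegenerately on $B$. Theorem \ref{bvol} therefore yields $\Delta(A/J)^{\perp\perp} = \Delta((A/J)^{**})$. Combining this with the outcome of the previous step, $\Delta(A/J)^{\perp\perp} = (\Delta(A)/\Delta(J))^{\perp\perp}$ as subspaces of $(A/J)^{**}$.

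Finally, both $\Delta(A/J)$ and $\Delta(A)/\Delta(J)$ are norm-closed subspaces of $A/J$, with $\Delta(A)/\Delta(J) \subseteq \Delta(A/J)$ by Proposition \ref{Deltaquotient}; since a norm-closed subspace of a Banach space coincides with the intersection of its biannihilator with that space, intersecting the last displayed equality with $A/J$ gives $\Delta(A/J) = \Delta(A)/\Delta(J)$, as desired. I expect the only delicate point to be the bookkeeping in the second application of Theorem \ref{bvol}, namely confirming that $A/J$ inherits both the property of being a HSA in its bidual and that of having a positive cai, together with the identification of $(\Delta(A)/\Delta(J))^{**}$ with a weak*-closed subspace of $(A/J)^{**}$; everything else is formal.
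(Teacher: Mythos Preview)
Your proposal is correct and follows essentially the same route as the paper: apply Theorem \ref{bvol} to $A$ to feed into Proposition \ref{Deltabidual}, apply Theorem \ref{bvol} again to $A/J$ (after checking via Lemma \ref{hsher} and the Remark after Lemma \ref{delex} that $A/J$ is a HSA in its bidual with a positive cai), and conclude by comparing biannihilators inside $(A/J)^{**}$ together with the inclusion from Proposition \ref{Deltaquotient}. The paper additionally remarks that $\Delta(A/J)$ and $\Delta(A)/\Delta(J)$ are annihilator $C^*$-algebras, but your direct biannihilator-intersection argument already suffices for the final step, so this observation is not needed.
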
 \begin{proof}  Since $A$ is a HSA in its bidual and it contains a positive cai, we have $\Delta(A^{**}) =\Delta(A)^{**}$ by Theorem \ref{bvol}.
Also, $A/J$ is a HSA in its bidual $(A/J)^{**}$ by Lemma \ref{hsher},
 and it can easily be seen by e.g.\ 2.1.2 in \cite{BLM} to have a positive cai.
Hence  $\Delta((A/J)^{**}) = \Delta(A/J)^{**}$ by Theorem \ref{bvol}.
 Moreover, since $A/J$ is a HSA in its bidual, 
 $\Delta(A/J)$ is an annihilator $C^*$-algebra. 
We know by Proposition \ref{Deltaquotient} that $\Delta(A) / \Delta(J) \subset \Delta(A/J)$ completely isometrically.  Hence $\Delta(A)/\Delta(J)$ is an annihilator $C^*$-algebra as well.
Using Proposition \ref{Deltabidual}, we have $(\Delta(A) / \Delta(J))^{**} = \Delta((A/J)^{**}) = \Delta(A/J)^{**}$.
Hence, $\Delta(A)/\Delta(J) = \Delta(A/J)$.
\end{proof}

We end with another result on the diagonal related to Corollary
\ref{innp}:

\begin{proposition}   If $A$ is an approximately unital operator algebra, if
$D$ is a HSA in $A$ and if $J$ is an approximately unital ideal in $A$, then 
$\Delta (D \cap J)^{\perp \perp} = \Delta (D)^{\perp \perp} \cap \Delta(J)^{\perp \perp}$.  If $D$ and $J$ have positive cais, then  $D \cap J$ has a positive cai as well.

\end{proposition}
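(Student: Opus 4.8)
The plan is to deduce the first assertion from the Banach-space duality principle already used in the proof of Corollary \ref{innp}(2), namely that $(E\cap F)^{\perp\perp} = E^{\perp\perp}\cap F^{\perp\perp}$ for closed subspaces $E,F$ of a Banach space whenever $E+F$ is closed; I will apply this with the ambient space $A$, $E = \Delta(D)$, $F = \Delta(J)$. First I would record that $D\cap J$ is a HSA in $A$ by Corollary \ref{innp}(1), hence an inner ideal, so Proposition \ref{ini} gives $\Delta(D\cap J) = (D\cap J)\cap\Delta(A)$; applying Proposition \ref{ini} again to the inner ideals $D$ and $J$ yields $\Delta(D\cap J) = (D\cap\Delta(A))\cap(J\cap\Delta(A)) = \Delta(D)\cap\Delta(J)$. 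Next I would check that, inside the $C^*$-algebra $\Delta(A)$, $\Delta(D)$ is a HSA (since $\Delta(D)\Delta(A)\Delta(D)\subset DAD\cap\Delta(A)\subset\Delta(D)$, and $\Delta(D)$, being a $C^*$-algebra, has a cai) while $\Delta(J)$ is an approximately unital ideal (as noted before Proposition \ref{Deltaquotient}). Then Corollary \ref{innp}(3), applied with $\Delta(A)$ in the role of $A$, shows $\Delta(D)+\Delta(J)$ is closed (in $\Delta(A)$, hence in $A$), and the duality principle above gives $\Delta(D\cap J)^{\perp\perp} = (\Delta(D)\cap\Delta(J))^{\perp\perp} = \Delta(D)^{\perp\perp}\cap\Delta(J)^{\perp\perp}$ in $A^{**}$. (If $\Delta(D)$ or $\Delta(J)$ is zero the identity is trivial.)

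For the claim about positive cais I would argue with support projections in $A^{**}$. Let $p$ and $q$ denote the support projections of $D$ and $J$. Since $J$ is an approximately unital ideal, $q$ is central in $A^{**}$, whence $D^{\perp\perp}\cap J^{\perp\perp} = pA^{**}p\cap qA^{**} = (pq)A^{**}(pq)$, so the support projection of the HSA $D\cap J$ is $p\wedge q = pq$. Recall from the discussion after Lemma \ref{delex} that $D$ has a positive cai iff $p = 1_{D^{**}}\in\Delta(D)^{\perp\perp}$, and similarly for $J$; under our hypotheses therefore $p\in\Delta(D)^{\perp\perp}$ and $q\in\Delta(J)^{\perp\perp}$. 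Passing to biduals of the inclusions $\Delta(D),\Delta(J)\subset\Delta(A)$, one has $\Delta(D)^{\perp\perp} = p\,\Delta(A)^{\perp\perp}\,p$ (as $\Delta(D)$ is a HSA in $\Delta(A)$ whose support projection is $p$, the positive cai of $D$ lying in $\Delta(D)$) and $\Delta(J)^{\perp\perp} = q\,\Delta(A)^{\perp\perp}$ with $q$ central in $\Delta(A)^{\perp\perp}$ (as $\Delta(J)$ is an approximately unital ideal there). Since $q\in\Delta(A)^{\perp\perp}$ is central, $pq = pqp\in p\,\Delta(A)^{\perp\perp}\,p = \Delta(D)^{\perp\perp}$ and $pq = qp\in q\,\Delta(A)^{\perp\perp} = \Delta(J)^{\perp\perp}$; hence $pq\in\Delta(D)^{\perp\perp}\cap\Delta(J)^{\perp\perp} = \Delta(D\cap J)^{\perp\perp}$ by the first part. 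As $pq = 1_{(D\cap J)^{**}}$, this says $\Delta(D\cap J)$ acts nondegenerately on $D\cap J$, i.e. $D\cap J$ has a positive cai.

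The only delicate point is the bookkeeping in the second paragraph: keeping straight which algebra each support projection lives in (and that $1_{\Delta(D)^{\perp\perp}}$ really equals $p$ once $D$ has a positive cai), and using the centrality of $q$ to land $p\wedge q$ in $\Delta(D)^{\perp\perp}$ and $\Delta(J)^{\perp\perp}$ simultaneously. Everything else is a routine assembly of Propositions \ref{qi} and \ref{ini}, Corollary \ref{innp}, and the standard annihilator calculus for closed subspaces.
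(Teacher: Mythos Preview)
Your proof is correct and follows essentially the same strategy as the paper. The paper's proof is terse: for the first assertion it merely says ``a modification of the proof of Proposition~\ref{a2} or Corollary~\ref{innp}'', and you have carried out precisely that modification (applying Corollary~\ref{innp}(3) inside the $C^*$-algebra $\Delta(A)$ to the HSA $\Delta(D)$ and the ideal $\Delta(J)$, then invoking the annihilator identity for subspaces with closed sum). For the second assertion the paper writes the single line $1_{(D\cap J)^{\perp\perp}} = 1_{D^{\perp\perp}}1_{J^{\perp\perp}} \in \Delta(D)^{\perp\perp}\cap\Delta(J)^{\perp\perp}$ without justifying the membership; your argument via $\Delta(D)^{\perp\perp} = p\,\Delta(A)^{\perp\perp}\,p$ and the centrality of $q$ in $\Delta(A)^{\perp\perp}$ supplies exactly the missing verification. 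One minor remark: your closing sentence lists Proposition~\ref{qi} among the ingredients, but it plays no role here and can be dropped.
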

\begin{proof}  A modification of the proof  of
Proposition \ref{a2} or Corollary \ref{innp} shows that
$\Delta (D \cap J)^{\perp \perp} = \Delta (D)^{\perp \perp} \cap \Delta(J)^{\perp \perp}$. 

 If $D$ and $J$ have positive cais, then
 \[ 1_{(D \cap J)^{\perp \perp}} = 1_{D^{\perp \perp} 
\cap J^{\perp \perp}} = 1_{D^{\perp \perp}} 1_{J^{\perp \perp}} \in \Delta(D)^{\perp \perp} \cap \Delta(J)^{\perp \perp} = \Delta( D \cap J)^{\perp \perp}. \]
That is, $1_{(D \cap J)^{\perp \perp}} \in \Delta(D \cap J)^{\perp \perp}$. Hence, $D \cap J$ has a positive cai.
\end{proof}

\end{document}